\newcolumntype{C}[1]{>{\centering\let\newline\\\arraybackslash\hspace{0pt}}m{#1}}
\providecommand{\U}[1]{\protect\rule{.1in}{.1in}}
\providecommand{\boksie}{\ensuremath{\mathbin{\raisebox{0.3mm}{$\scriptstyle\square$}}}}
\newcommand{\Mod}[1]{\ (\text{mod}\ #1)}
\newcommand{\ig}[1]{\mathscr{I}(#1)}
\newcommand{\thet}[1]{\Theta\left\langle #1\right\rangle }
\providecommand{\rad}{\operatorname{rad}}
\newcommand{\Dia}{\mathfrak{D}}
\newcommand{\house}{\mathcal{H}}
\def\ieInner(#1,#2,#3,#4){#1 \overset{#2 #3}{\sim} #4}
\def\edge#1{\ieInner(#1)}
\def\ieInnerA(#1,#2,#3){\overset{#1 #2}{\sim} #3}
\def\sedge(#1,#2){#1\sim #2}
\def\ieInnerB(#1,#2,#3,#4,#5){#1 \overset{#2 #3}{\sim_{#5}} #4} 
\def\edgeG#1{\ieInnerB(#1)}
\def\ieInnerC(#1,#2,#3){#1 {\sim_{#3}} #2 }
\def\adjG#1{\ieInnerC(#1)}
\def\ieInnerD(#1,#2,#3,#4){#1 \sim #2 \sim \dots \sim #3 \sim #4}
\def\adjL#1{\ieInnerD(#1)}
\def\ieInnerE(#1,#2){#1 \sim #2}
\def\onlyedge#1{\ieInnerE(#1)}
\def\ieInnerF(#1,#2){\overset{#1 #2}{\sim}}
\definecolor{ltsky}{RGB}{0,191,255}
\definecolor{ltteal}{RGB}{0, 128, 128 }
\definecolor{medOrch}{RGB}{122,55,139}
\definecolor{royalBlue}{RGB}{65,105,225}
\definecolor{forGreen}{RGB}{34,139,34}
\definecolor{dand}{RGB}{255,193,37}
\definecolor{lightBlue}{RGB}{176,226,255}
\definecolor{lgrey}{RGB}{209,209,209}
\definecolor{lgray}{gray}{0.95}
\definecolor{mgray}{gray}{0.40}
\definecolor{mmgray}{gray}{0.60}
\definecolor{zelim}{RGB}{7,163,82}
\definecolor{zelim2}{RGB}{5,114,57}
\tikzstyle{std}=[ circle, draw=black,fill=black,thick, inner sep=2pt, minimum size=2.5mm]
\tikzstyle{wstd}=[ circle, draw=black,fill=white,thick, inner sep=2pt, minimum size=2.5mm]
\tikzstyle{ir}=[ circle, draw=black,fill=green,thick,  inner sep=2pt, minimum size=2mm]
\tikzstyle{mp}=[circle, draw=black,fill=Dandelion,thick,  inner sep=2pt, minimum size=2mm]
\tikzstyle{bred}=[circle, draw=black,fill=red,thick,  inner sep=2pt, minimum size=2mm]
\tikzstyle{byellow}=[circle, draw=black,fill=yellow,very thick,  inner sep=2pt, minimum size=3mm]
\tikzstyle{bgray}=[circle, draw=black,fill=mmgray,thick,  inner sep=2pt, minimum size=2mm]
\tikzstyle{bzed}=[circle, draw=black,fill=zelim,thick,  inner sep=2pt, minimum size=2mm]
\tikzstyle{smred}=[ circle, draw=black,fill=red,thick,  inner sep=1pt, minimum size=1.5mm]
\tikzstyle{bblue}=[ circle, draw=black,fill=blue,thick,  inner sep=2pt, minimum size=2.5mm]
\tikzstyle{bgreen}=[ circle, draw=black,fill=green,thick,  inner sep=2pt, minimum size=2.5mm]
\tikzstyle{regRed}=[ circle, draw=black,fill=red,thick,  inner sep=2pt, minimum size=2.5mm]
\tikzstyle{sqRed}=[rectangle, draw=black,fill=red,thick,  inner sep=2pt, minimum size=2.5mm]
\tikzstyle{regG}=[ circle, draw=black,fill=green,thick,  inner sep=2pt, minimum size=2mm]
\tikzstyle{dred}=[diamond, draw=black,fill=red,thick,  inner sep=2pt, minimum size=2mm]
\tikzstyle{tr}=[color=black, style=dotted]
\tikzstyle{sp}=[color=ProcessBlue, line width = 2pt]
\tikzstyle{bline}=[color=blue, line width = 1pt]
\tikzstyle{rline}=[color=red, line width = 1pt]
\tikzstyle{rlinelt}=[color=red, line width = 1pt,opacity=0.2]
\tikzstyle{rlinemb}=[color=red, line width = 1pt, densely dashdotted]
\tikzstyle{gline}=[color=mmgray, line width = 1pt]
\tikzstyle{zline}=[color=zelim, line width = 1pt]
\tikzstyle{ll}=[color=gray]
\tikzstyle{vertex}=[circle, fill=black, inner sep= 0, minimum size = 4]
\tikzset{broadArrow/.style={single arrow, fill=red!50, anchor=base, align=center,text width=2.8cm}}
\newcommand{\convexpath}[2]{
	[   
	create hullcoords/.code={
		\global\edef\namelist{#1}
		\foreach [count=\counter] \nodename in \namelist {
			\global\edef\numberofnodes{\counter}
			\coordinate (hullcoord\counter) at (\nodename);
		}
		\coordinate (hullcoord0) at (hullcoord\numberofnodes);
		\pgfmathtruncatemacro\lastnumber{\numberofnodes+1}
		\coordinate (hullcoord\lastnumber) at (hullcoord1);
	},
	create hullcoords
	]
	($(hullcoord1)!#2!-90:(hullcoord0)$)
	\foreach [
	evaluate=\currentnode as \previousnode using \currentnode-1,
	evaluate=\currentnode as \nextnode using \currentnode+1
	] \currentnode in {1,...,\numberofnodes} {
		let \p1 = ($(hullcoord\currentnode) - (hullcoord\previousnode)$),
		\n1 = {atan2(\y1,\x1) + 90},
		\p2 = ($(hullcoord\nextnode) - (hullcoord\currentnode)$),
		\n2 = {atan2(\y2,\x2) + 90},
		\n{delta} = {Mod(\n2-\n1,360) - 360}
		in 
		{arc [start angle=\n1, delta angle=\n{delta}, radius=#2]}
		-- ($(hullcoord\nextnode)!#2!-90:(hullcoord\currentnode)$) 
	}
}
\newtheorem{theorem}{Theorem}[section]
\newtheorem{cons}[theorem]{Construction}
\newtheorem{coro}[theorem]{Corollary}
\newtheorem{lemma}[theorem]{Lemma}
\newtheorem{obs}[theorem]{Observation}
\newtheorem{prop}[theorem]{Proposition}
\newtheorem{ques}{Question}
\newenvironment{proof}[1][Proof]{\noindent\textbf{#1.} }{\ \hfill \rule{0.5em}{0.5em}}
\begin{document}

\title{\textbf{Reconfiguration of Minimum Independent Dominating Sets in Graphs}}


\author{R. C. Brewster\thanks{Funded by Discovery Grants from the Natural Sciences and
Engineering Research Council of Canada, RGPIN-2014-04760, RGPIN-03930-2020.}}
\affil{Department of Mathematics and Statistics\\Thompson Rivers University\\805 TRU Way\\Kamloops, B.C.\\ \textsc{Canada} V2C 0C8}
\author{C. M. Mynhardt$^{*}$}
\author{L. E. Teshima}
\affil{Department of Mathematics and Statistics\\University of Victoria\\PO BOX 1700 STN CSC\\Victoria, B.C.\\\textsc{Canada} V8W 2Y2 \authorcr {\small rbrewster@tru.ca, kieka@uvic.ca, lteshima@uvic.ca}}

\maketitle

\begin{abstract}
The independent domination number $i(G)$ of a graph $G$ is the minimum cardinality of a maximal independent 
set of $G$, also called an $i(G)$-set.  
The $i$-graph of $G$, denoted $\ig{G}$, is the graph whose vertices correspond to the $i(G)$-sets, and where two 
$i(G)$-sets are adjacent if and only if they differ by two adjacent vertices.  
We show that not all graphs are $i$-graph realizable, that is, given a target graph $H$, there does not 
necessarily exist a source graph $G$ such that $H \cong \ig{G}$.  Examples of such graphs include 
$K_{4}-e$ and $K_{2,3}$. 
We build a series of tools to show that known $i$-graphs can be used to construct new $i$-graphs and apply these 
results to build other classes of $i$-graphs, such as block graphs, hypercubes, forests, cacti, 
and unicyclic graphs.
\end{abstract}

\noindent\textbf{Keywords:\hspace{0.1in}}independent domination number, graph reconfiguration, $i$-graph

\noindent\textbf{AMS Subject Classification Number 2020:\hspace{0.1in}}05C69	

\section{Introduction}
The $i$-graph $H$ of a graph $G$ is an example of a ``reconfiguration graph''. It has as its vertex set the minimum independent dominating sets of $G$, and two vertices of $H$ are adjacent whenever the symmetric difference of their corresponding sets consists of two vertices that are adjacent in $G$. We consider the following realizability question: for which graphs $H$ does there exist a graph $G$ such that $H$ is the $i$-graph of $G$? 

Following definitions and general discussions in the remainder of this section, we begin our investigation into $i$-graph realizability in Section \ref{sec:i:obs} by composing a series of observations and technical lemmas concerning the adjacency of vertices in an $i$-graph and the structure of their associated $i$-sets in the seed graph.    In Section \ref{sec:i:Real}, we present the three smallest graphs which are not $i$-graphs,  and in Section \ref{sec:i:basicGraphs}, we show that several common graph classes, like trees and cycles, are $i$-graphs.  We conclude by examining, in Section \ref{sec:i:tools},  how new $i$-graphs can be constructed from known ones.

\subsection{Reconfiguration} \label{Rec}

In general, a reconfiguration problem asks whether it is possible to transform a given source (or
seed) solution to a given problem into a target solution through a series of incremental transformations (called
reconfiguration steps) under some specified rule, such that each intermediate step is also
a solution. The resulting chain of the source solution, intermediate solutions, and target
 solution is a reconfiguration sequence.

In graph theory, reconfiguration problems are often concerned with solutions that are
vertex/edge subsets or labellings of a graph. In particular, when the solution is a vertex (or edge)
subset, the reconfiguration problem can be viewed as a token manipulation problem, where
a solution subset is represented by placing a token at each vertex or edge of the subset.
The reconfiguration step for vertex subsets can be of one of three variants (edge
subsets are handled analogously):

\begin{itemize}
\item[$\vartriangleright $] \textbf{Token Slide (TS) Model}: A single token
is slid along an edge between adjacent vertices.

\item[$\vartriangleright $] \textbf{Token Jump (TJ) Model}: A single token
jumps from one vertex to another (without the vertices necessarily being
adjacent).

\item[$\vartriangleright $] \textbf{Token Addition/Removal (TAR) Model:} A
single token can either be added to a vertex or be removed from a vertex.
\end{itemize}

To represent the many possible solutions in a reconfiguration problem, each solution can
be represented as a vertex of a new graph, referred to as a \textit{reconfiguration graph}, where adjacency between vertices follows one of the three token adjacency models, producing the
\textit{slide graph}, the \textit{jump graph}, or the \textit{TAR graph}, respectively.

More formally, given a graph $G$, the \textit{slide graph} of $G$ under some specified reconfiguration
rule is the graph $H$ such that each vertex of $H$ represents a solution of some problem
on $G$, and two vertices $u$ and $v$ of $H$ are adjacent if and only if the solution in $G$ corresponding
to $u$ can be transformed into the solution corresponding to $v$ by sliding a single
token along an edge of $G$.

\subsection{$\gamma$-Graphs}
We use the standard notation of $\gamma (G)$ for the cardinality of a
minimum dominating set of a graph $G$. The \emph{private neighbourhood} of a vertex $v$ with respect to a vertex
set $S$ is the set $\mathrm{pn}(v,S)=N[v]-N[S-\{v\}]$; therefore, a
dominating set $S$ is minimal dominating if, for each $u\in S$, $\mathrm{pn}%
(u,S)$ is nonempty. The \emph{external private neighbourhood} of $v$ with
respect to $S$ is the set $\mathrm{epn}(v,S)=\mathrm{pn}(v,S)-\{v\}$.
The \textit{independent domination number} $i(G)$ of $G$ is the
minimum cardinality of a maximal independent set of $G$, or, equivalently,
the minimum cardinality of an independent domination set of $G$. An independent dominating set of $G$ of cardinality $i(G)$ is also called an $i$-\emph{set} of $G$, or an $i(G)$-\emph{set}.

In general, we follow the notation of \cite{CLZ}. In particular, the disjoint union of two
graphs $G$ and $H$ is denoted $G\cup H$, whereas the join of $G$ and $H$,
denoted $G\vee H$, is the graph obtained from $G\cup H$ by joining every
vertex of $G$ with every vertex of $H$. For other domination principles and
terminology, see \cite{HHS2, HHS1}.

First defined by Fricke, Hedetniemi, Hedetniemi, and Hutson \cite{FHHH11} in 2011,
the $\gamma $-\textit{graph of a graph} $G$ is the graph $G(\gamma
)=(V(G(\gamma )),E(G(\gamma )))$, where each vertex $v\in V(G(\gamma ))$
corresponds to a $\gamma $-set $S_{v}$ of $G$. The vertices $u$ and $v$ in $%
G(\gamma )$ are adjacent if and only if there exist vertices $u^{\prime }$
and $v^{\prime }$ in $G$ such that $u^{\prime }v^{\prime }\in E(G)$ and $%
S_{v}=(S_{u}-u^{\prime })\cup \{v^{\prime }\}$;  this is a token-slide model
of adjacency. 

An initial question of Fricke et al. \cite{FHHH11} was to determine exactly which graphs
are $\gamma $-graphs; they showed that every tree is the $\gamma $%
-graph of some graph and conjectured that every graph is the $\gamma $-graph of some graph. Later that year,
Connelly, Hutson, and Hedetniemi \cite{CHH10} proved this conjecture to be true. For additional results on $\gamma $-graphs, see \cite{B15, CHH10, EdThesis, FHHH11}. Mynhardt and Teshima \cite{MT18} investigated slide model reconfiguration graphs with respect to other domination parameters.

Subramanian and Sridharan \cite{SS08} independently defined a different $\gamma $-%
\textit{graph of a graph} $G$, denoted $\gamma \cdot G$. The vertex set of $%
\gamma \cdot G$ is the same as that of $G(\gamma )$; however, for $u,w\in
V(\gamma \cdot G)$ with associated $\gamma $-sets $S_{u}$ and $S_{w}$ in $G$%
, $u$ and $w$ are adjacent in $\gamma \cdot G$ if and only if there exist
some $v_{u}\in S_{u}$ and $v_{w}\in S_{w}$ such that $S_{w}=(S_{u}-\{v_{u}%
\})\cup \{v_{w}\}$. This version of the $\gamma $-graph was dubbed the
\textquotedblleft single vertex replacement adjacency
model\textquotedblright\ by Edwards \cite{EdThesis}, and is sometimes referred to as
the \textquotedblleft jump $\gamma $-graph\textquotedblright\ as it follows
the TJ-Model for token reconfiguration. Further results concerning $\gamma
\cdot G$ can be found in \cite{LV10, SS13, SS09}. Notably, if $G$ is a tree or a
unicyclic graph, then there exists a graph $H$ such that $\gamma \cdot H=G$ \cite{SS09}. Conversely, if $G$ is the 
(jump) $\gamma $-graph of some graph $H$,
then $G$ does not contain any induced $K_{2,3},\ P_{3}\vee K_{2}$, or $%
(K_{1}\cup K_{2})\vee 2K_{1}$ \cite{LV10}.

Using a token addition/removal model, Haas and Seyffarth \cite{HS14} define the $k$-%
\textit{dominating graph} $D_{k}(G)$ of $G$ as the graph with vertices
corresponding to the $k$-dominating sets of $G$ (i.e., the dominating sets
of cardinality at most $k$). Two vertices in the $k$-dominating graph are
adjacent if and only if the symmetric difference of their associated $k$%
-dominating sets contains exactly one element. Additional results can be
found in \cite{Adaricheva2021, AFK17, HS17, HIMNOST16, SMN16}, and a survey on reconfiguration of colourings and dominating sets of graphs in \cite{MN20}.

\subsection{$i$-Graphs}
The \emph{$i$-graph} of a graph $G$, denoted $\ig{G}=(V(\ig{G}),E(\ig{G}))$, is the graph with vertices 
representing the minimum independent dominating sets of $G$ (that is, the \emph{$i$-sets} of $G$).  
As in the case of $\gamma $-graphs as defined in \cite{FHHH11}, adjacency in $\ig{G}$ follows a slide
model where $u,v\in V(\ig{G})$, corresponding to the $i(G)$-sets  $S_u$ and $S_v$, respectively, are 
adjacent in $\ig{G}$ if and only if there exists  $xy \in E(G)$ such that $S_u = (S_v-x)\cup\{y\}$.  
We say $H$ \emph{is an $i$-graph}, or is $i$-\emph{graph realizable}, if there exists some graph $G$ such that 
$\ig{G}\cong H$. Moreover, we refer to $G$ as the \emph{seed graph} of the $i$-graph $H$.  
Going forward, we mildly abuse notation to denote both the $i$-set $X$ of $G$ and its corresponding 
vertex in $H$ as $X$, so that $X \subseteq V(G)$ and $X \in V(H)$. 

Imagine that there is a token on each vertex of an $i$-set $S$ of $G$.  Then $S$ is adjacent, in $\ig{G}$, to an $i(G)$-set $S'$ if and only if a single token can be slid along an edge of $G$ to transform $S$ into $S'$.  Notice that the \emph{token jump model} of reconfiguration for independent domination is identical to the token-slide model.  On a graph $G$ a token may only ``jump'' from a vertex $v$ in the $i$-set $S_1$ to another vertex $w$ (to form the $i$-set $S_2$) if $w$ is dominated only by $v$ in $S_1$.  Otherwise,  if $w$ is dominated by some other $u \neq v$ in $S_1$, then $(S_1-{u}) \cup \{w\}$ is not an independent set as it contains the adjacent vertices $u$ and  $w$.  A token is said to be \emph{frozen} (in any reconfiguration model) if there are no available vertices to which it can slide/jump.

In acknowledgment of the slide-action in $i$-graphs, given $i$-sets  $X = \{x_1,x_2,\dots,x_k\}$ and 
$Y=\{y_1,x_2,\dots x_k\}$ of $G$ with $x_1y_1 \in E(G)$, we denote the adjacency of $X$ and $Y$ in $\ig{G}$ 
as $\edge{X,x_1,y_1,Y}$, where we imagine transforming the $i$-set $X$ into $Y$ by sliding the token at $x_1$ 
along an edge to $y_1$.  When discussing several graphs, we use the notation $\edgeG{X,x_1,y_1,Y,G}$ 
to specify that the relationship is on $G$.  
More generally, we use $x \sim y$ to denote the adjacency of vertices $x$ and $y$ (and $x\not \sim y$ to 
denote non-adjacency); this is used in the context of both the seed graph and the target graph.

Although every graph is the $\gamma $-graph of some graph, there is no such  tidy theorem for $i$-graphs; as we show in Section \ref{sec:i:Real}, not every graph is an $i$-graph, and determining which classes of graphs are (or are not) $i$-graphs has proven to be an interesting challenge.

\section{Observations} \label{sec:i:obs}

To begin, we propose several observations about the structure of $i$-sets within given $i$-graphs which we then use to construct a series of useful lemmas.

\begin{obs} \label{obs:i:edge}
	Let $G$ be a graph and $H=\ig{G}$.  A vertex $X \in V(H)$ has $\deg_H(X)\geq 1$ if and only if for some $v \in X \subseteq V(G)$, there exists $u\in \mathrm{epn}(v,X)$ such that $u$ dominates $\mathrm{pn}(v,X)$.  
\end{obs}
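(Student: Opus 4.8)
The plan is to prove the two implications separately, in each case working directly with a single token slide $Y=(X-v)\cup\{u\}$ where $vu\in E(G)$; the whole argument is essentially an unpacking of the definitions of \emph{independent dominating set} and \emph{private neighbourhood}, with one genuine case split.

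For the ``if'' direction, I would assume $v\in X$ and $u\in\mathrm{epn}(v,X)$ with $u$ dominating $\mathrm{pn}(v,X)$, and set $Y=(X-v)\cup\{u\}$. First I would record that $u\notin X$: since $u\in\mathrm{pn}(v,X)=N[v]-N[X-v]$ we have $u\notin N[X-v]\supseteq X-v$, and $u\neq v$ because $u$ is an \emph{external} private neighbour; consequently $Y\neq X$ and $|Y|=|X|=i(G)$. Next, $Y$ is independent, because $X-v$ is independent and $u$, lying outside $N[X-v]$, is distinct from and non-adjacent to every vertex of $X-v$. Then I would check that $Y$ dominates $G$: given $w\in V(G)$, pick $x\in X$ with $w\in N[x]$; if $x\neq v$ then $x\in X-v\subseteq Y$ and we are done, while if $x=v$ then either $w\in N[X-v]$ (done) or $w\in N[v]-N[X-v]=\mathrm{pn}(v,X)\subseteq N[u]$ by hypothesis. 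Hence $Y$ is an independent dominating set of cardinality $i(G)$, i.e.\ an $i$-set, and $\edge{X,v,u,Y}$ witnesses $\deg_H(X)\geq 1$.

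For the ``only if'' direction, I would assume $\deg_H(X)\geq 1$ and let $Y$ be a neighbour of $X$ in $H$, say $Y=(X-v)\cup\{u\}$ with $vu\in E(G)$. Since $Y$ is an $i$-set, $|Y|=|X|$, which forces $v\in X$ and $u\notin X$. From $vu\in E(G)$ we get $u\in N[v]$, and from the independence of $Y$ we get $u\notin N[X-v]$; together with $u\neq v$ this gives $u\in N[v]-N[X-v]=\mathrm{pn}(v,X)$ with $u\neq v$, i.e.\ $u\in\mathrm{epn}(v,X)$. Finally, any $w\in\mathrm{pn}(v,X)$ satisfies $w\notin N[X-v]$, so since $Y=(X-v)\cup\{u\}$ dominates $G$ it must be that $w\in N[u]$; thus $u$ dominates $\mathrm{pn}(v,X)$.

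I do not expect a serious obstacle here: the only points requiring care are verifying that a token slide from $X$ genuinely yields a set of the same cardinality (so that ``$v\in X$ and $u\notin X$'' is \emph{forced}, not assumed) and organising the domination check through the dichotomy ``dominated by $X-v$'' versus ``private to $v$.'' Everything else is immediate from the definitions.
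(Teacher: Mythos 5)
Your proof is correct and complete: both implications are exactly the routine unpacking of the definitions that the paper leaves implicit by stating this as an Observation without proof, with the key points (that $u\notin N[X-v]$ forces independence of $Y$, and that the domination check splits along ``dominated by $X-v$'' versus ``in $\mathrm{pn}(v,X)$'') handled properly. No gaps.
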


\noindent From a token-sliding perspective, Observation \ref{obs:i:edge} shows that a token on an $i$-set vertex $v$ is frozen if and only if $\mathrm{epn}(v) = \varnothing$ or $G[\mathrm{epn}(v,X)]$ has no dominating vertex.

For some path $X_1,X_2,\dots,X_k$ in $H$, only one vertex of the $i$-set is changed at each step, and so $X_1$ and $X_k$ differ on at most $k$ vertices.  This yields the following observation.  

\begin{obs} \label{obs:i:dist}
	Let $G$ be a graph and $H = \ig{G}$.  Then for any $i$-sets $X$ and $Y$ of $G$, the distance $d_H(X,Y) \geq |X-Y|$.  
\end{obs}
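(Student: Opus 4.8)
The plan is to walk along a shortest path in $H$ from $X$ to $Y$ and bound how far the associated $i$-sets can drift apart, one slide at a time. First I would dispose of the case where $X$ and $Y$ lie in different components of $H$, since then $d_H(X,Y) = \infty$ and the inequality holds vacuously. Otherwise, let $X = X_0, X_1, \dots, X_k = Y$ be a shortest $X$--$Y$ path, so that $k = d_H(X,Y)$. By the definition of adjacency in $\ig{G}$, for each $j \in \{1,\dots,k\}$ there is an edge $a_j b_j \in E(G)$ with $X_j = (X_{j-1} - a_j) \cup \{b_j\}$; in particular, consecutive $i$-sets on the path agree on all but one vertex, and $a_j$ is the unique vertex removed at step $j$.

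The heart of the argument is then to show $|X_0 - X_k| \le k$. I would build an injection $\phi$ from $X_0 - X_k$ into $\{1,\dots,k\}$: for $v \in X_0 - X_k$ we have $v \in X_0$ but $v \notin X_k$, so $v$ is removed at some step; let $\phi(v)$ be the \emph{largest} index $j$ with $v \in X_{j-1}$ and $v \notin X_j$. Since $\phi(v)$ is the last step at which $v$ leaves, $v \notin X_{j'}$ for all $j' \ge \phi(v)$, and in particular $v = a_{\phi(v)}$. Because exactly one vertex is removed at each step, $\phi$ is injective, giving $|X - Y| = |X_0 - X_k| \le k = d_H(X,Y)$, as required. (An equivalent route is induction on $k$ via $X_0 \triangle X_k \subseteq (X_0 \triangle X_{k-1}) \cup (X_{k-1}\triangle X_k)$, together with $|X_{j-1}\triangle X_j| = 2$ and $|X_0 - X_k| = \tfrac12 |X_0 \triangle X_k|$, the last equality holding since every $i$-set has cardinality $i(G)$.)

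This is a routine observation, so I do not anticipate a genuine obstacle; the one point worth a little care is that a token may leave $X_0$, slide back in at a later step, and leave again, so mapping each $v$ to its \emph{first} departure need not be injective — hence the choice of the last departure in the definition of $\phi$.
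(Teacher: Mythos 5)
Your argument is correct and is essentially the paper's own: the paper justifies this observation with the single remark that along a path in $H$ only one vertex of the $i$-set changes per step, so the endpoints differ on at most as many vertices as there are steps. Your injection $\phi$ (choosing the \emph{last} departure of each token) simply makes that one-line argument rigorous, including the re-entry subtlety the paper glosses over.
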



\begin{lemma} \label{lem:i:claw}
	Let $G$ be a graph with $H = \ig{G}$.  Suppose $XY$ and $YZ$ are edges in $H$ with $\edge{X,x,y_1,Y}$ and $\edge{Y,y_2,z,Z}$, with $X \neq Z$.  Then $XZ$ is an edge of $H$ if and only if $y_1=y_2$.
\end{lemma}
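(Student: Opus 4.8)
The plan is to translate the two slide relations into set identities and then analyze the symmetric differences. From $\edge{X,x,y_1,Y}$ we have $Y=(X\setminus\{x\})\cup\{y_1\}$ with $xy_1\in E(G)$, and from $\edge{Y,y_2,z,Z}$ we have $Z=(Y\setminus\{y_2\})\cup\{z\}$ with $y_2z\in E(G)$. Since $G$ is loopless and all four sets have cardinality $i(G)$, a short bookkeeping step records the facts $x\notin Y$, $z\notin Y$, and, when $y_1\neq y_2$, $y_2\in X\setminus\{x\}$. I would then dispatch the two directions separately.

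For sufficiency, suppose $y_1=y_2=:y$. Then $X=(Y\setminus\{y\})\cup\{x\}$ and $Z=(Y\setminus\{y\})\cup\{z\}$, so $X$ and $Z$ coincide off $\{x,z\}$; since $x,z\notin Y\setminus\{y\}$ and $X\neq Z$, this gives $x\neq z$ and $X\triangle Z=\{x,z\}$. It remains to check that $xz\in E(G)$, for then $Z=(X\setminus\{x\})\cup\{z\}$ exhibits the required single slide. This is the one step that is not mere bookkeeping: if $xz\notin E(G)$, then $X\cup\{z\}$ is independent, because $z$ is nonadjacent to $Y\setminus\{y\}=Z\setminus\{z\}$ (as $Z$ is independent) and nonadjacent to $x$ by assumption; this contradicts the maximality of the $i$-set $X$. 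Hence $xz\in E(G)$ and $XZ\in E(H)$.

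For necessity I would argue the contrapositive: assuming $y_1\neq y_2$, I show $|X\setminus Z|\geq 2$, so that Observation~\ref{obs:i:dist} forces $d_H(X,Z)\geq 2$ and hence $XZ\notin E(H)$. First, $y_2\in X$ while $y_2\notin Z$ (since $y_2\neq z$ as $y_2z\in E(G)$, and $y_2\notin Y\setminus\{y_2\}$), so $y_2\in X\setminus Z$. Second, $x\in X$, and $x\notin Z$ unless $x=z$; but $x=z$ would make the second relation read $Z=(Y\setminus\{y_2\})\cup\{x\}$ with $xy_2\in E(G)$, contradicting the independence of $X$, which contains both $x$ and $y_2$. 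Thus $x\in X\setminus Z$, and since $x\neq y_2$ (as $x\notin Y\ni y_2$) we obtain $|X\setminus Z|\geq 2$.

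The main obstacle is the verification that $xz\in E(G)$ in the sufficiency direction: this adjacency does not follow from $xy_1,y_1z\in E(G)$ alone, and must instead be deduced from the maximality of $i$-sets, in the spirit of Observation~\ref{obs:i:edge}. Everything else reduces to careful tracking of set membership, where the degenerate possibility $x=z$ in the necessity direction is the only subtlety that needs to be ruled out.
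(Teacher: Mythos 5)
Your proof is correct and follows essentially the same route as the paper's: in the forward direction you deduce $xz\in E(G)$ from the fact that $z$ must have a neighbour in $X$ (you phrase this via maximality of the independent set $X$, the paper via $X$ being dominating — the same fact for an $i$-set) together with the independence of $Z$, and in the reverse direction you show $|X\setminus Z|=2$ and invoke Observation~\ref{obs:i:dist}. The only cosmetic difference is how $x\neq z$ is ruled out (you use independence of $X$ on the pair $x,y_2$; the paper uses $x\sim y_1$ versus $z\not\sim y_1$), which does not change the substance.
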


\begin{proof}
	Let $X=\{x,v_2,v_3\dots,v_k\}$ and $Y=\{y_1,v_2,,v_3\dots,v_k\}$ so that $\edge{X,x,y_1,Y}$.  	
	To begin, suppose $y_1=y_2$.  Then  $\edge{Y,y_1,z,Z}$ and $Z=\{z,v_2,,v_3\dots,v_k\}$, hence $|X-Z|  = 1$.   Since $X$ is dominating, $z$ is adjacent to a vertex in $\{x,v_2,v_3\dots,v_k\}$; moreover, since $Z$ is independent, $z$ is not adjacent to any of $\{v_2,v_3,\dots v_k\}$.  Thus $z$ is adjacent to $x$ in $G$ and $\edge{X,x,z,Z}$, so that $XZ \in E(H)$.  
	
	Conversely, suppose $y_1\neq y_2$.  Then, without loss of generality, say $y_2=v_2$ and so $X=\{x,y_2,v_3\dots,v_k\}$, $Y=\{y_1,y_2,v_3\dots,v_k\}$, and $Z=\{y_1,z,v_3\dots,v_k\}$.  Notice that $x \neq z$ since $x \sim y_1$ and $z \not\sim y_1$.  Thus  $|X-Z| = 2$, and it follows that $XZ \notin E(H)$.
\end{proof}

\medskip

\noindent Combining Observation \ref{obs:i:dist} and Lemma \ref{lem:i:claw} yields the following observation for vertices of $i$-graphs at distance two. 

\begin{obs} \label{obs:i:d2}
	Let $G$ be a graph and $H = \ig{G}$.  Then for any $i$-sets $X$ and $Y$ of $G$, if $d_H(X,Y) = 2$, then $|X-Y|=2$.  
\end{obs}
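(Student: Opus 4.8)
The plan is to take a shortest $X$--$Y$ path of length two and feed it into Lemma \ref{lem:i:claw}. Since $d_H(X,Y)=2$, we have $X\neq Y$, $XY\notin E(H)$, and there is a vertex $W\in V(H)$ with $XW,WY\in E(H)$. Write these two edges as $\edge{X,x_1,w_1,W}$ and $\edge{W,w_2,y_1,Y}$. Because $X\neq Y$ and $XY\notin E(H)$, the ``if and only if'' of Lemma \ref{lem:i:claw} immediately forces $w_1\neq w_2$. (If one prefers, Observation \ref{obs:i:dist} already gives $|X-Y|\le 2$, and $X\neq Y$ gives $|X-Y|\ge 1$, so it would suffice to rule out $|X-Y|=1$; but the argument below pins down $|X-Y|$ exactly and makes Observation \ref{obs:i:dist} unnecessary here.)

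Next I would unpack what $w_1\neq w_2$ says about the sets, which is exactly the computation in the converse direction of the proof of Lemma \ref{lem:i:claw}; I would reproduce it for completeness. Write $X=\{x_1,v_2,\dots,v_k\}$ and $W=\{w_1,v_2,\dots,v_k\}$. The condition $w_1\neq w_2$ means $w_2$ is one of $v_2,\dots,v_k$, so after relabelling $w_2=v_2$, and then $Y=(W-w_2)\cup\{y_1\}=\{w_1,y_1,v_3,\dots,v_k\}$. Hence $X-Y\subseteq\{x_1,v_2\}$ and $Y-X\subseteq\{w_1,y_1\}$, and it only remains to check that $x_1\notin Y$ and $v_2\notin Y$, which gives $X-Y=\{x_1,v_2\}$ and therefore $|X-Y|=2$.

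The only point needing genuine care — and the place I expect any subtlety to live — is verifying $x_1\neq y_1$ (and hence $x_1\notin Y$); everything else is bookkeeping. That $v_2=w_2\notin Y$ is immediate from $Y=(W-w_2)\cup\{y_1\}$ and $w_2\sim y_1$ (so $w_2\neq y_1$). For $x_1$, I would use the same trick as in Lemma \ref{lem:i:claw}: from the first slide $x_1\sim w_1$ in $G$, while $w_1,y_1\in Y$ and $Y$ is independent force $y_1\not\sim w_1$ in $G$; hence $x_1\neq y_1$. Combined with $x_1\neq w_1$ (no loops) and $x_1\notin\{v_2,\dots,v_k\}$ (as $x_1$ is a distinct element of the set $X$), this yields $x_1\notin Y$. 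Thus $X-Y=\{x_1,v_2\}$ with $x_1\neq v_2$, so $|X-Y|=2$, completing the proof. No serious obstacle is anticipated: the statement is essentially a corollary of Lemma \ref{lem:i:claw}, with the adjacency/independence observation $x_1\neq y_1$ being the one step that is easy to overlook.
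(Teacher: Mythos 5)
Your proof is correct and follows essentially the route the paper intends: the paper gives no explicit proof, asserting only that the observation follows by combining Observation \ref{obs:i:dist} with Lemma \ref{lem:i:claw}, and your argument is precisely that combination applied to the midpoint of a length-two path, with the converse-direction bookkeeping of Lemma \ref{lem:i:claw} (including the key check $x_1 \neq y_1$ via independence of $Y$) written out in full. No gaps.
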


\begin{lemma} \label{lem:i:K1m}  Let $G$ be a graph and  $H = \ig{G}$.  Suppose $H$ contains an induced $K_{1,m}$ with vertex set $\{X,Y_1,Y_2, \dots, Y_m\}$ and $\deg_H(X)=m$.  Let $i \neq j$.  Then in $G$,  
	\begin{enumerate}[itemsep=1pt, label=(\roman*)]
		\item $X-Y_i \neq X-Y_j$, \label{lem:K1m:a}
		\item  $|Y_i \cap Y_j| = i(G)-2$, and  \label{lem:K1m:b}
		\item $m \leq i(G)$. \label{lem:K1m:c}
	\end{enumerate}
\end{lemma}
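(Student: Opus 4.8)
The plan is to push everything down into the seed graph $G$: record the slide that realises each edge $XY_i$, derive \ref{lem:K1m:a} by a short contradiction argument through Lemma \ref{lem:i:claw}, note that \ref{lem:K1m:c} is an immediate corollary of \ref{lem:K1m:a}, and get \ref{lem:K1m:b} from Observation \ref{obs:i:d2}.

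First I would fix notation. For each $i$, the edge $XY_i$ of $H$ comes from the definition of the $i$-graph: there is $x_iy_i\in E(G)$ with $Y_i=(X\setminus\{x_i\})\cup\{y_i\}$. Here $x_i\in X$, $x_i\neq y_i$ (since $x_iy_i\in E(G)$), and $y_i\notin X$ (otherwise $|Y_i|<i(G)$); in slide notation $\edge{X,x_i,y_i,Y_i}$, equivalently $\edge{Y_i,y_i,x_i,X}$. Also, since $\deg_H(X)=m$ and $\{X,Y_1,\dots,Y_m\}$ induces $K_{1,m}$, the neighbours of $X$ in $H$ are exactly $Y_1,\dots,Y_m$, and $Y_i\not\sim Y_j$ for $i\neq j$.

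For \ref{lem:K1m:a}, suppose for contradiction that $X-Y_i=X-Y_j$ for some $i\neq j$, i.e. $x_i=x_j$. Since $Y_i\neq Y_j$ and $Y_i\not\sim Y_j$, the vertices $Y_i,X,Y_j$ form a path in $H$; moreover $\edge{Y_i,y_i,x_i,X}$ and $\edge{X,x_j,y_j,Y_j}$, so the vertex gained in passing $Y_i\to X$ equals the vertex lost in passing $X\to Y_j$ (both are $x_i=x_j$). Lemma \ref{lem:i:claw}, applied to the path $Y_i,X,Y_j$, then forces $Y_iY_j\in E(H)$, contradicting that $\{X,Y_1,\dots,Y_m\}$ induces $K_{1,m}$. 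Hence $x_1,\dots,x_m$ are pairwise distinct elements of $X$, which immediately gives $m\le|X|=i(G)$, proving \ref{lem:K1m:c}.

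Finally, for \ref{lem:K1m:b}, fix $i\neq j$. The path $Y_i,X,Y_j$ gives $d_H(Y_i,Y_j)\le 2$, while $d_H(Y_i,Y_j)\ge 2$ because $Y_i\neq Y_j$ and $Y_i\not\sim Y_j$; thus $d_H(Y_i,Y_j)=2$, and Observation \ref{obs:i:d2} yields $|Y_i-Y_j|=2$. Since $|Y_i|=|Y_j|=i(G)$, we conclude $|Y_i\cap Y_j|=i(G)-2$. The only delicate point in the argument is matching notation when invoking Lemma \ref{lem:i:claw}: one must record the reverse slide correctly as $\edge{Y_i,y_i,x_i,X}$ (legitimate because $x_iy_i\in E(G)$) and observe that its hypothesis ``$y_1=y_2$'' is exactly the assumed coincidence $x_i=x_j$. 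Beyond that bookkeeping I do not anticipate any real obstacle.
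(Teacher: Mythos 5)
Your proposal is correct and takes essentially the same approach as the paper: parts \ref{lem:K1m:a} and \ref{lem:K1m:c} come from applying Lemma \ref{lem:i:claw} to the path $Y_i,X,Y_j$ and then from the pairwise distinctness of $x_1,\dots,x_m$ inside $X$, exactly as in the paper's proof. The only cosmetic difference is in part \ref{lem:K1m:b}, where the paper computes $Y_i\cap Y_j=X-\{x_i,x_j\}$ directly while you route through $d_H(Y_i,Y_j)=2$ and Observation \ref{obs:i:d2}; both arguments are valid.
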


\begin{proof}
	Suppose $\edge{X,x_i,y_i,Y_i}$ and $\edge{X,x_j,y_j,Y_j}$.  Then $(X - Y_i) = \{x_i\}$ and $(X-Y_j) = \{x_j\}$.    From Lemma \ref{lem:i:claw}, since $Y_i \not\sim Y_j$, we have that $x_i \neq x_j$, which establishes Statement \ref{lem:K1m:a}.  Moreover, $Y_i \cap Y_j = X - \{x_i,x_j\}$, and so as these are $i$-sets, Statement \ref{lem:K1m:b} also follows.  Finally, for Statement \ref{lem:K1m:c},  again applying Lemma \ref{lem:i:claw}, we see that $|\bigcap_{1\leq i \leq m} Y_i| = |X|-m = i(G)-m \geq 0$.	
\end{proof}

\medskip

Induced $C_4$'s in a target graph $H$ play an important role in determining the $i$-graph realizability of $H$ and determine a specific relationship among $i$-sets of a potential source graph $G$, as we show next.

\begin{prop} \label{prop:i:C4struct}
	Let $G$ be a graph and $H=\ig{G}$.  Suppose $H$ has an induced $C_4$ with vertices $X,A,B,Y$, where $XY, AB \notin E(H)$.   Then, without loss of generality, the set composition of $X,A,B,Y$ in $G$, and the edge labelling  of  the induced $C_4$ in $H$, are as in Figure \ref{fig:i:C4struct}.
\end{prop}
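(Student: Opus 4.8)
The plan is to reconstruct all four $i$-sets and all four slide-labels of the induced $C_4$ by combining Observation~\ref{obs:i:d2} with Lemma~\ref{lem:i:claw}. Since the $C_4$ is induced with non-edges $XY$ and $AB$, its edges are exactly $XA$, $AY$, $YB$, $BX$, and each of the two non-adjacent pairs lies at distance exactly $2$ in $H$; these two facts drive everything.

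First I would record the coarse structure. From $d_H(X,Y)=2$ and Observation~\ref{obs:i:d2} we get $|X-Y|=2$, and likewise $|A-B|=2$. Write $X-Y=\{s,t\}$, $Y-X=\{s',t'\}$, and $W=X\cap Y$, so $|W|=i(G)-2$, $X=W\cup\{s,t\}$, and $Y=W\cup\{s',t'\}$. Here $s,t,s',t'$ are four distinct vertices, none lying in $W$, since $s,t\notin Y\supseteq W$ and $s',t'\notin X\supseteq W$.

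Next I would pin down $A$. Write the edge $X\sim A$ as $\edge{X,x,y,A}$. The removed vertex $x$ must lie in $X-Y$: if instead $x\in W$, then $A$ still contains both $s$ and $t$, forcing $|A-Y|\ge 2$ and contradicting $A\sim Y$. A similar analysis of $A\sim Y$ shows the added vertex $y$ satisfies $y\notin X$ and $y\in Y$, i.e. $y\in Y-X$. Relabelling the elements of $X-Y$ and of $Y-X$ if necessary, we may assume $x=s$ and $y=s'$, so $A=W\cup\{s',t\}$ and $ss'\in E(G)$; comparing $A$ with $Y$ then shows the edge $A\sim Y$ is the slide $t\to t'$, so $tt'\in E(G)$.

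Finally I would locate $B$. Exactly as for $A$, we get $B=(X-x')\cup\{y'\}$ with $x'\in\{s,t\}$ and $y'\in\{s',t'\}$. Applying Lemma~\ref{lem:i:claw} to the path $A\sim X\sim B$ (with $A\neq B$) together with the non-edge $AB$: the vertex added to $A$ to reach $X$ is $s$, so the vertex $x'$ removed from $X$ to reach $B$ cannot be $s$; hence $x'=t$ and $B=W\cup\{s,y'\}$. Since $ss'\in E(G)$, the choice $y'=s'$ would place the adjacent pair $s,s'$ into the independent set $B$, which is impossible; so $y'=t'$ and $B=W\cup\{s,t'\}$, and comparing $B$ with $X$ and with $Y$ identifies the edges $X\sim B$ and $B\sim Y$ as the slides $t\to t'$ and $s\to s'$ respectively, consistent with $ss',tt'\in E(G)$. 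Collecting everything, $X=W\cup\{s,t\}$, $A=W\cup\{s',t\}$, $B=W\cup\{s,t'\}$, $Y=W\cup\{s',t'\}$ with $ss',tt'\in E(G)$ and the stated slide-labels around the $C_4$, which is the configuration of Figure~\ref{fig:i:C4struct}. The main obstacle is precisely this $A$/$B$ bookkeeping: justifying the relabelling as a genuine ``without loss of generality'' and keeping straight which elements of $\{s,t\}$ and $\{s',t'\}$ play which role; once $A$ is fixed, $B$ and all four slide-labels are forced.
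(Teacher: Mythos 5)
Your proof is correct and takes essentially the same approach as the paper's: an element-chasing argument driven by Lemma~\ref{lem:i:claw} together with Observation~\ref{obs:i:dist} (via Observation~\ref{obs:i:d2}). The only difference is organizational --- the paper fixes $X$, labels the slides to $A$ and $B$ first, and then deduces $Y$ from the edge $AY$, whereas you start from the diagonal pair $X,Y$ with $|X-Y|=2$ and squeeze $A$ and $B$ in between; both routes force the same configuration.
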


\begin{figure}[H]
	\centering
	\begin{tikzpicture}[scale=0.8]
		
		\node[std,label={270:$Y=\{y_1,y_2,v_3,\dots,v_k\}$}] (y) at (0,0) {};
		\node[std,label={180:$A=\{y_1,x_2,v_3,\dots,v_k\}$}] (a) at (-2,2) {};
		\node[std,label={0:$B=\{x_1,y_2,v_3,\dots,v_k\}$}] (b) at (2,2) {};
		\node[std,label={90:$X=\{x_1,x_2,v_3,\dots,v_k\}$}] (x) at (0,4) {};
		
		\draw[thick] (x)--(a) node[midway,left=10pt,fill=white]{$\edge{X,x_1,y_1,A}$};
		\draw[thick] (x)--(b) node[midway,right=10pt,fill=white]{$\edge{X,x_2,y_2,B}$};
		\draw[thick] (a)--(y) node[midway,left=10pt,fill=white]{$\edge{A,x_2,y_2,Y}$};
		\draw[thick] (b)--(y) node[midway,right=10pt,fill=white]{$\edge{B,x_1,y_1,Y}$};
	\end{tikzpicture}			
	\caption{Reconfiguration structure of an induced $C_4$ subgraph from Proposition \ref{prop:i:C4struct}.}
	\label{fig:i:C4struct}%
\end{figure}
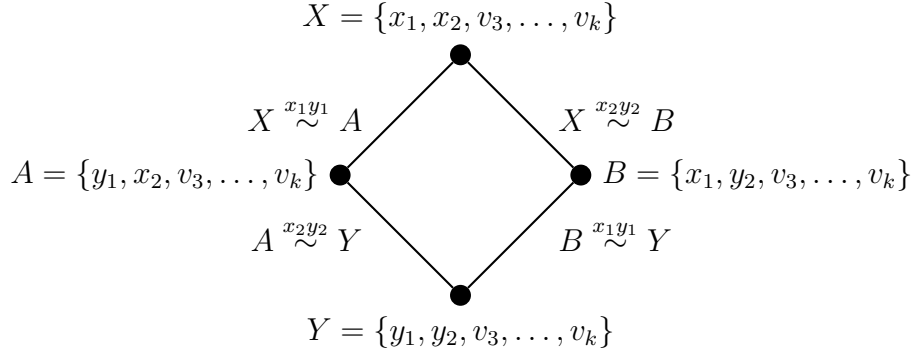

\begin{proof}
	Suppose that the $i$-set $X$ of $G$ has $X=\{x_1,x_2,v_3,\dots,v_k\}$. 
	Then by Lemma \ref{lem:i:claw}, without loss of generality, the edge from $X$ to $A$ can be labelled as $\edge{X,x_1,y_1,A}$ for some $y_1 \in V(G) - X$, so that $A = \{y_1,x_2,v_3,\dots,v_k\}$, while the edge from $X$ to $B$ can be labelled $\edge{X,x_2,y_2,B}$ for some $y_2$ and $B=\{x_1,y_2,v_3,\dots,v_k\}$.  
	
	Consider the edge $AY \in E(H)$ labelled $\edge{A,a,y^*,Y}$.  From Lemma \ref{lem:i:claw}, since  $XY \notin E(G)$, $a \neq y_1$.  If, say, $a=v_3$, then $Y=\{y_1,x_2,y^*,\dots,v_k\}$.  However, neither $y_1$ nor $x_2$ is in $B$, so $|Y-B| \geq 2$, contradicting Observation \ref{obs:i:dist}.  	
	Thus, $a \neq v_i$ for any $3 \leq i \leq k$.  This leaves $a=x_2$, and $Y=\{y_1,y^*,v_3\dots,v_k\}$.  Since $|Y-B|=1$, $y^*=y_2$ and $Y=\{y_1,y_2,v_3,\dots,v_k\}$   as required.
\end{proof}

\section{Realizability of $i$-Graphs} \label{sec:i:Real}

Having now established  a series of observations and lemmas about the structures of $i$-graphs and the composition of their associate $i$-sets, we demonstrate that not all graphs are $i$-graphs by presenting three counterexamples: the diamond graph $\Dia$, $K_{2,3}$ and $\kappa$, as pictured in Figure \ref{fig:3nonReal}.

\begin{figure}[H]
	\centering
	\begin{figure}[H]
		\centering
		\begin{tikzpicture}
			
			\node(cent) at (0,0) {};			
			\path (cent) ++(180:35 mm) node(lcent)  {};	
			
			\path (lcent) ++(90:10 mm) node[std](x)  {};	
			\path (x) ++(90: 4mm) node (xL)  {$X$};
			\path (lcent) ++(-90:10 mm) node[std](y)  {};
			\path (y) ++(-90: 4mm) node (zL)  {$Y$};
			\path (lcent) ++(180:8 mm) node[std](a)  {};
			\path (a) ++(180: 4mm) node (aL)  {$A$};
			\path (lcent) ++(0:8 mm) node[std](b)  {};
			\path (b) ++(0: 4mm) node (bL)  {$B$};
			
			\draw(x)--(y)--(a)--(x);
			\draw(x)--(b)--(y);
			
			\path (lcent) ++(-90: 24mm) node (GL)  {$\Dia=K_4-e$};	
			
			
			\path (cent) ++(90:10mm) node[std](x)  {};
			\path (x) ++(90: 4mm) node (xL)  {$X$};
			\path (cent) ++(-90:10 mm) node[std](y)  {};
			\path (y) ++(-90: 4mm) node (yL)  {$Y$};
			\path (cent) ++(180:8 mm) node[std](a)  {};
			\path (a) ++(180: 4mm) node (aL)  {$A$};
			\path (cent) ++(0:8 mm) node[std](c)  {};
			\path (c) ++(0: 4mm) node (cL)  {$C$};
			\path (cent) ++(0:0 mm) node[std](b)  {};
			\path (b) ++(0: 4mm) node (bL)  {$B$};
			
			\draw(x)--(a)--(y)--(c)--(x)--(b)--(y);
			
			\path (cent) ++(-90: 24mm) node (GL)  {$K_{2,3}$};			
			
			
			\path (cent) ++(0:35 mm) node[std](brcent)  {};	
			\path (brcent) ++(0: 4mm) node (bL)  {$B$};			
			
			\path (brcent) ++(90:10mm) node[std](x)  {};	
			\path (x) ++(90: 4mm) node (xL)  {$X$};
			\path (brcent) ++(-90:10 mm) node[std](y)  {};
			\path (y) ++(-90: 4mm) node (yL)  {$Y$};
			\path (brcent) ++(180:8 mm) node[std](a)  {};
			\path (a) ++(180: 4mm) node (aL)  {$A$};
			\path (brcent) ++(0:8 mm) node (b)  {};
			
			\path (b) ++(90:4 mm) node[std](b1)  {};
			\path (b1) ++(0: 4mm) node (bL)  {$C_1$};
			\path (b) ++(-90:4 mm) node[std](b2)  {};
			\path (b2) ++(0: 4mm) node (bL)  {$C_2$};
			
			\draw(x)--(y)--(a)--(x);
			\draw(x)--(b1)--(b2)--(y);

			\path (brcent) ++(-90: 24mm) node (GL)  {$\kappa: K_{2,3}$ with a};
			\path (brcent) ++(-90: 30mm) node (GL)  {subdivided edge};
		\end{tikzpicture}	
	\end{figure}			
	\caption{Three graphs not realizable as $i$-graphs.}
	\label{fig:3nonReal}%
\end{figure}

\begin{prop} \label{prop:i:diamond}
	The diamond graph $\Dia = K_4-e$ is not $i$-graph realizable.
\end{prop}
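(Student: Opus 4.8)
The plan is a proof by contradiction that extracts rigid set-theoretic information from the edges, and the single non-edge, of $\Dia$, using Lemma \ref{lem:i:claw} as the only tool (note that, although $\Dia$ contains a $4$-cycle, it is not an \emph{induced} $4$-cycle, so Proposition \ref{prop:i:C4struct} does not apply directly). Suppose $\ig{G}\cong\Dia=K_4-e$ for some graph $G$, and label $V(\Dia)=\{X,Y,A,B\}$ as in Figure \ref{fig:3nonReal}: $X$ and $Y$ are the two vertices of degree $3$ (so $XY\in E(\Dia)$), while $A$ and $B$ are the two vertices of degree $2$ (so $AB\notin E(\Dia)$, and each of $A,B$ is adjacent to each of $X,Y$). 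Identify $X,Y,A,B$ with the corresponding $i(G)$-sets and set $k=i(G)$; since adjacent vertices of $\ig{G}$ correspond to $i$-sets whose symmetric difference is one slid edge, $|P\setminus Q|=1$ for every edge $PQ$ of $\Dia$.

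First I would apply Lemma \ref{lem:i:claw} to the two length-two paths $X-A-Y$ and $X-B-Y$, exploiting the chord $XY\in E(\Dia)$. On $X-A-Y$, with $A$ the middle vertex, the lemma (since $XY$ is an edge) identifies the vertex added to pass from $X$ to $A$ with the vertex removed to pass from $A$ to $Y$, that is, the unique element of $A\setminus X$ with the unique element of $A\setminus Y$; thus $A\setminus X=A\setminus Y$, so $W_A:=A\cap X=A\cap Y$ has size $k-1$ and $X=W_A\cup\{x\}$, $Y=W_A\cup\{y\}$ with $\{x\}=X\setminus A$, $\{y\}=Y\setminus A$, and $x\neq y$ because $X\neq Y$. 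The same computation on $X-B-Y$ gives $W_B:=B\cap X=B\cap Y$ of size $k-1$, with $X=W_B\cup\{x'\}$, $Y=W_B\cup\{y'\}$, $\{x'\}=X\setminus B$, $\{y'\}=Y\setminus B$.

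Next I would bring in the non-edge $AB$ via the paths $A-X-B$ and $A-Y-B$. On $A-X-B$, with $X$ the middle vertex, Lemma \ref{lem:i:claw} (since $AB$ is \emph{not} an edge) forces the unique element of $X\setminus A$ to differ from the unique element of $X\setminus B$, i.e.\ $x\neq x'$; symmetrically, $A-Y-B$ gives $y\neq y'$. But then $W_A=X\setminus\{x\}$ and $W_B=X\setminus\{x'\}$ are two $(k-1)$-subsets of the $k$-set $X$ omitting different elements, so $W_A\cup W_B=X$; in the same way $W_A\cup W_B=Y$. Hence $X=Y$, contradicting that $X$ and $Y$ are distinct vertices of $\Dia$, and so no graph $G$ satisfies $\ig{G}\cong\Dia$.

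The argument is essentially all bookkeeping, so the only point that needs care is the orientation of each application of Lemma \ref{lem:i:claw}: the ``matching'' vertex in that lemma always lies in the middle set, so on $X-A-Y$ it is $A\setminus X=A\setminus Y$ that is pinned down, whereas on $A-X-B$ it is $X\setminus A$ and $X\setminus B$ that are constrained. The one conceptual point --- what makes $\Dia$ fail to be an $i$-graph --- is that the two internally disjoint $A$--$B$ paths of $\Dia$, through $X$ and through $Y$, together with the chord $XY$, force the cores $W_A=A\cap X=A\cap Y$ and $W_B=B\cap X=B\cap Y$ to cover all of $X$ and, simultaneously, all of $Y$, which is possible only if $X=Y$.
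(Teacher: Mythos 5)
Your proof is correct and uses essentially the same approach as the paper: a contradiction argument whose only tool is Lemma \ref{lem:i:claw}, applied to the $2$-paths of $\Dia$ through $A$ and $B$ (using the chord $XY$) and through $X$ and $Y$ (using the non-edge $AB$). The paper reaches its contradiction one step sooner --- the chord analysis already forces $X\setminus A = X\setminus B$, so one further application of the lemma yields $AB\in E(\Dia)$ directly --- whereas you route through the (equally valid) conclusion $X=Y$; the bookkeeping is otherwise identical.
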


\begin{proof}
	Suppose to the contrary there is some graph $G$ with $\ig{G}=\Dia$.  Let $V(\Dia)=\{X,A,B,Y\}$ where $AB \notin E(\Dia)$.   Say $\edge{X,x,y,Y}$.  Then by Lemma \ref{lem:i:claw}, without loss of generality, the edges incident with $A$ can be labelled as $\edge{X,x,a,A}$ and $\edge{A, a,y,Y}$.  Likewise, $\edge{X,x,b,B}$ and $\edge{B, b,y,Y}$ (see Figure \ref{fig:3nonReal}).  However,  since $\edge{B, b,x,X}$ and $\edge{X,x,a,A}$, Lemma \ref{lem:i:claw} implies that $AB\in E(\Dia)$, a contradiction.	  
\end{proof}

\begin{prop} \label{prop:i:K23}
	The graph $K_{2,3}$ is not $i$-graph realizable.
\end{prop}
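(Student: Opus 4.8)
The plan is to assume, for contradiction, that $\ig{G} \cong K_{2,3}$ for some graph $G$, and to write $V(K_{2,3}) = \{X,Y\} \cup \{A,B,C\}$ so that $\{X,Y\}$ and $\{A,B,C\}$ are the two parts. The strategy is to exploit the rigid structure forced by Proposition~\ref{prop:i:C4struct} on one of the induced $4$-cycles, and then to show that the two edges incident with the remaining vertex $C$ cannot be labelled consistently with that structure.

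First I would apply Proposition~\ref{prop:i:C4struct} to the induced $C_4$ with vertices $X,A,B,Y$, which is legitimate since $XY \notin E(K_{2,3})$ and $AB \notin E(K_{2,3})$ (each of these pairs lies in one part). Up to relabelling this yields, as in Figure~\ref{fig:i:C4struct}, $i$-sets $X = \{x_1,x_2,v_3,\dots,v_k\}$, $A = \{y_1,x_2,v_3,\dots,v_k\}$, $B = \{x_1,y_2,v_3,\dots,v_k\}$, $Y = \{y_1,y_2,v_3,\dots,v_k\}$ with edge labels $\edge{X,x_1,y_1,A}$, $\edge{X,x_2,y_2,B}$, $\edge{A,x_2,y_2,Y}$ and $\edge{B,x_1,y_1,Y}$. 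Since $X,A,B,Y$ are pairwise distinct $i$-sets of $G$, a short check shows the $k+2$ vertices $x_1,x_2,y_1,y_2,v_3,\dots,v_k$ are pairwise distinct; in particular $x_1, x_2 \notin Y$. Moreover $\{X,A,B,C\}$ induces a $K_{1,3}$ in $\ig{G}$ with centre $X$ of degree $3$, so Statement~\ref{lem:K1m:c} of Lemma~\ref{lem:i:K1m} gives $k = i(G) \geq 3$; hence $\{v_3,\dots,v_k\} \neq \varnothing$.

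Next I would pin down $C$. Because $C \sim X$ while $AC, BC \notin E(\ig{G})$, Lemma~\ref{lem:i:claw} applied to the path $A$--$X$--$C$ (using the label $\edge{X,x_1,y_1,A}$) shows that the edge $XC$ does not move the token on $x_1$, and applied to $B$--$X$--$C$ (using $\edge{X,x_2,y_2,B}$) that it does not move the token on $x_2$; so $XC$ slides a token off some $v_i$, say off $v_3$, giving $C = \{x_1,x_2,q,v_4,\dots,v_k\}$ for some $q \in V(G) - X$. Symmetrically, from $C \sim Y$, $AC, BC \notin E(\ig{G})$ and the labels $\edge{A,x_2,y_2,Y}$, $\edge{B,x_1,y_1,Y}$, Lemma~\ref{lem:i:claw} applied to $A$--$Y$--$C$ and $B$--$Y$--$C$ shows that $YC$ slides a token off a vertex of $Y$ other than $y_1$ and $y_2$, so $C = (Y - v_j) \cup \{s\}$ for some $v_j \in \{v_3,\dots,v_k\}$ and $s \in V(G) - Y$. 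The latter description forces $|C - Y| = 1$, whereas the former shows $x_1$ and $x_2$ are both in $C - Y$ (recall $x_1 \neq x_2$ and $x_1, x_2 \notin Y$); this contradiction finishes the proof.

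I expect the only genuine difficulty to be the bookkeeping: orienting each two-edge path correctly before invoking Lemma~\ref{lem:i:claw}, tracking which token is re-used along it, and deducing the pairwise distinctness of $x_1,x_2,y_1,y_2,v_3,\dots,v_k$ purely from the fact that $X,A,B,Y$ are distinct $i$-sets. Beyond Proposition~\ref{prop:i:C4struct} and Lemma~\ref{lem:i:K1m}, no new idea seems necessary; Lemma~\ref{lem:i:K1m} is invoked only to ensure $i(G)\geq 3$ so that the relabelling ``say off $v_3$'' is meaningful (when $i(G)=2$ the path $A$--$X$--$C$ argument already produces an immediate contradiction).
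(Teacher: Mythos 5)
Your proof is correct and follows essentially the same route as the paper's: apply Proposition \ref{prop:i:C4struct} to the induced $C_4$ on $X,A,B,Y$, use Lemma \ref{lem:i:claw} to force $C=\{x_1,x_2,c,v_4,\dots,v_k\}$, and contradict Observation \ref{obs:i:dist} via the edge $CY$. Your explicit appeal to Lemma \ref{lem:i:K1m} to guarantee $i(G)\geq 3$ is a small extra point of rigor that the paper leaves implicit, and your closing count ($|C-Y|\geq 2$ versus $|C-Y|=1$) is a cosmetic variant of the paper's $|C-Y|=3$ versus $d(C,Y)=1$.
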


\begin{proof}
	Suppose $K_{2,3} = \ig{G}$ for some graph $G$.  Let  $ \{\{X,Y\},\{A,B,C\}\}$ be the bipartition of $K_{2,3}$. 
	Apply the exact labelling from Proposition \ref{prop:i:C4struct} and Figure \ref{fig:i:C4struct} to the $i$-sets and edges of  $X,A,B,$ and $Y$.  We attempt to extend the labelling to $C$.  By Lemma \ref{lem:i:claw}, since $C$ is adjacent to $X$, but not $A$ or $B$,  without loss of generality, $\edge{X, v_3,c,C}$ and $C=\{x_1,x_2,c,v_4,\dots,v_k\}$.  As $A$ is an $i$-set, $y_1v_3\notin E(G)$.   Since $v_3c\in E(G)$, $c \neq y_1$.  Similarly, $c \neq y_2$.  
	Now $|C-Y| =3$ and $d(C,Y)=1$, contradicting Observation \ref{obs:i:dist}.   	
\end{proof}


\begin{prop} \label{prop:i:kappa}
	The graph $\kappa$ is not $i$-graph realizable.
\end{prop}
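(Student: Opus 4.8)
The plan is to derive a contradiction by assuming $\kappa = \ig{G}$ for some seed graph $G$ and then pinning down the $i$-sets of $G$ using the structural results already developed. Since $\kappa$ is $K_{2,3}$ with one edge subdivided, I would first locate its unique induced $C_4$: the four-cycle $X \sim A \sim Y \sim C_1 \sim B \sim X$ is actually a $5$-cycle together with the edges to $A$, so I must be careful. Concretely, $\kappa$ has vertices $X,Y,A,B,C_1,C_2$ with $X\sim A$, $A\sim Y$, $X\sim B$, $B\sim C_1$, $C_1\sim C_2$, $C_2\sim Y$. The subgraph induced on $\{X,A,Y,B\}$ is an induced $C_4$ (edges $XA,AY,YB$? no---$Y\not\sim B$). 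Let me instead note the induced $C_4$ on $\{X,A,Y,\cdot\}$: the only $4$-cycle in $\kappa$ is $X\sim A\sim Y\sim \dots$; in fact $\kappa$ has girth $4$ via $X,A,Y$ and the $B$--$C_1$--$C_2$ path has length $3$, so $X\sim A\sim Y\sim C_2\sim C_1\sim B\sim X$ is the unique cycle, of length $6$. So $\kappa$ contains no induced $C_4$ at all, which means Proposition \ref{prop:i:C4struct} does not apply directly and I must work with Lemma \ref{lem:i:claw} and Observation \ref{obs:i:dist} along the long cycle.

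The key steps, in order: (1) Set $\edge{X,x_1,a,A}$ using Lemma \ref{lem:i:claw} on the path $B,X,A$; since $B\not\sim A$ the tokens slid out of $X$ toward $A$ and toward $B$ must land on different swapped-out vertices, so write $X=\{x_1,x_2,v_3,\dots,v_k\}$, $A=\{a,x_2,v_3,\dots,v_k\}$, $B=\{x_1,b,v_3,\dots,v_k\}$. (2) Analyze the edge $AY$: by Lemma \ref{lem:i:claw} applied to $X,A,Y$ (with $X\not\sim Y$) the token slides out of $A$ from a vertex other than $a$; arguing as in Proposition \ref{prop:i:C4struct} and using Observation \ref{obs:i:dist} relative to... but here $Y$ is at distance $3$ from $B$ in $\kappa$, not $2$, so I cannot conclude $|Y-B|\le 2$. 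Instead I use that $d_\kappa(Y,X)=2$, so by Observation \ref{obs:i:d2}, $|Y-X|=2$; combined with step (1) this forces $Y$ to agree with $X$ on $k-2$ coordinates. (3) Similarly $d_\kappa(C_1,X)=2$ forces $|C_1-X|=2$, and $d_\kappa(C_2,X)=3$, $d_\kappa(C_1,A)=3$, etc., giving a web of constraints. (4) Push these: $A\sim Y$ and $B\sim C_1$ and $C_1\sim C_2$ and $C_2\sim Y$, with $A,B$ both adjacent only to $X$ among $\{X,Y\}$ and $C_1$ adjacent to $B,C_2$ only. The likely contradiction: track which two coordinates each set differs from $X$ in. From $A=X$ except $x_1\mapsto a$ and $B=X$ except $x_2\mapsto b$; since $d(Y,X)=2$ and $Y$ is reached from $A$ by one slide, $Y=A$ except one coordinate, and from the distance-$2$ condition $Y=X$ except two coordinates---forcing $Y=\{a, y_2, v_3,\dots,v_k\}$ or $Y=\{x_1,\dots\}$-type; then $Y=\{a,b,v_3,\dots,v_k\}$ is the natural forced outcome (the "$C_4$ pattern"), but $Y\not\sim B$ would then need $|Y-B|\ge 2$, and indeed $|Y-B|=|\{a\}|=1$ contradiction---this is exactly the mechanism of Proposition \ref{prop:i:diamond}/\ref{prop:i:C4struct}. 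So in fact I expect $\kappa$ to contain a hidden forced $C_4$-like contradiction once $Y$'s set is determined, OR the resolution forces $Y=\{a,y_2,v_3,\dots\}$ with $y_2\ne b$, and then the $B$--$C_1$--$C_2$--$Y$ path gives $|Y-B|=2$ legitimately but I chase the path $C_1, C_2$ and reach a parity/cardinality clash with Observation \ref{obs:i:dist}.

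More precisely, the cleanest route I would take: after fixing $X,A,B$ as above, apply Lemma \ref{lem:i:claw} to $X,A,Y$. The slide $A\to Y$ is $\edge{A,a',y^*,Y}$ with $a'\ne a$. If $a'=v_i$ for some $i\ge 3$, then $Y$ differs from $X$ in the coordinates $x_1$ (now $a$) and $v_i$ (now $y^*$), so $|Y-X|=2$, consistent; but then consider the edge $XB$ together with $BY$-path---actually $Y\not\sim B$, so nothing forces $Y$ near $B$. Then I turn to $C_1$: $d(C_1,X)=2$ gives $|C_1-X|=2$, and $C_1\sim B$ gives $|C_1-B|=1$, so $C_1=B$ except one coordinate; combined, $C_1=\{x_1,b,v_3,\dots,v_k\}$ with one more swap, i.e. $C_1$ agrees with $X$ on exactly $k-2$ entries and one of the differing entries is the $x_2\mapsto b$ slot. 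Then $C_1\sim C_2$, $C_2\sim Y$, and $|C_2-X|$: by Observation \ref{obs:i:dist}, $|C_2-X|\le d_\kappa(C_2,X)=3$, but also $|C_2-Y|=1$ and $|C_2-C_1|=1$. Tracking the at-most-three coordinates in which $C_2$ differs from $X$ against the known differing coordinates of $Y$ and $C_1$, I expect to find that $C_2$ is forced to equal some already-named set, or that $|Y-C_1|$ comes out to be $1$ (making $Y\sim C_1$, extra edge, contradiction since $Y\not\sim C_1$ in $\kappa$) or $\ge 3$ (contradicting $d_\kappa(Y,C_1)=2$ via Observation \ref{obs:i:d2}). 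The main obstacle is the bookkeeping: keeping straight which coordinate each of the five sets $A,B,C_1,C_2,Y$ modifies relative to $X$, and using the non-adjacencies of $\kappa$ (especially $Y\not\sim C_1$, $A\not\sim B$, $A\not\sim C_1$, $A\not\sim C_2$, $B\not\sim C_2$, $B\not\sim Y$) to generate enough $|{\cdot}-{\cdot}|\ge 2$ inequalities that collide with the distance bounds. I would organize this as a short case analysis on the value of $a'$ (whether the $A\to Y$ slide swaps out a $v_i$ or swaps out $x_2$), mirroring the proof of Proposition \ref{prop:i:C4struct}, and in each case run the token-coordinate accounting down the $B,C_1,C_2,Y$ path to the contradiction.
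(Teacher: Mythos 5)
There is a genuine gap, and it starts before the argument does: you have misread the graph $\kappa$. As stated on page~\pageref{i:pg:theta}, $\kappa\cong\thet{2,2,3}$, i.e.\ three internally disjoint $X$--$Y$ paths $X,A,Y$ and $X,B,Y$ and $X,C_1,C_2,Y$; both $X$ and $Y$ have degree $3$, and $Y\sim B$, $C_1\sim X$. Your edge list ($X\sim A$, $A\sim Y$, $X\sim B$, $B\sim C_1$, $C_1\sim C_2$, $C_2\sim Y$) makes every vertex have degree $2$, so the graph you are analysing is just $C_6$ --- which \emph{is} an $i$-graph by Proposition~\ref{prop:i:cycle}, so no amount of coordinate bookkeeping on that graph can produce a contradiction. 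The downstream errors follow from this: your claim that $\kappa$ has no induced $C_4$ is false ($\{X,A,Y,B\}$ induces a $C_4$ with $XY,AB\notin E(\kappa)$, and this is exactly where Proposition~\ref{prop:i:C4struct} enters), and your distances are off ($d_\kappa(C_1,X)=1$, not $2$; $d_\kappa(C_1,Y)=2$ via $C_2$).

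Separately, even granting your setup, the proposal never actually closes: it ends with a list of outcomes you ``expect'' (a forced equality, or $|Y-C_1|=1$, or $|Y-C_1|\ge 3$) and an unexecuted case analysis. The correct argument is short once the graph is right. Apply Proposition~\ref{prop:i:C4struct} to the induced $C_4$ on $\{X,A,B,Y\}$ to get $X=\{x_1,x_2,v_3,\dots,v_k\}$, $A=\{y_1,x_2,v_3,\dots,v_k\}$, $B=\{x_1,y_2,v_3,\dots,v_k\}$, $Y=\{y_1,y_2,v_3,\dots,v_k\}$. Since $C_1\sim X$ but $C_1\not\sim A$ and $C_1\not\sim B$, Lemma~\ref{lem:i:claw} forces the slide $X\to C_1$ to leave from a vertex other than $x_1$ or $x_2$, say $\edge{X,v_3,y_3,C_1}$, so $C_1$ differs from $X$ in a \emph{third} coordinate. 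Then $|Y-C_1|=3$ while $d_\kappa(C_1,Y)=2$, contradicting Observation~\ref{obs:i:dist}. That is the whole proof; your instinct to use Lemma~\ref{lem:i:claw} and Observation~\ref{obs:i:dist} is the right one, but it has to be run on the actual adjacencies of $\kappa$.
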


\begin{proof}
	Suppose $\kappa = \ig{G}$ for some graph $G$ and let $V(\kappa) = \{X,A,B,C_1,C_2,Y\}$ as in  Figure  \ref{fig:3nonReal}, and to the subgraph induced by $X,A,B,Y$, apply the labelling of Proposition \ref{prop:i:C4struct} and Figure \ref{fig:i:C4struct}.  Through additional applications of Proposition \ref{prop:i:C4struct}, we can, as in the proof of Proposition \ref{prop:i:K23}, assume without loss of generality that $\edge{X,x_3,y_3,C_1}$.  However, $d(C_1,Y)=2$ but $|Y-C_1|=3$, contradicting Observation \ref{obs:i:dist}.  It follows that no such $G$ exists and $\kappa$ is not an $i$-graph.
\end{proof}

\medskip


The observant reader will have undoubtedly noticed the common structure between the graphs in the previous three propositions -- they are all members of the class of \emph{theta graphs} \label{i:pg:theta} (see \cite {B72}), graphs that are the union of three internally disjoint nontrivial paths with the same two distinct end vertices.  The graph $\thet{j,k,\ell}$ with $j \leq k \leq \ell$, is the theta graph with paths of lengths $j$, $k$,  and $\ell$.  In this notation, our three non $i$-graph realizable examples are $\Dia \cong \thet{1,2,2}$, $K_{2,3}\cong\thet{2,2,2}$, and $\kappa \cong \thet{2,2,3}$.

Further rumination on the similarity in structure suggests that additional subdivisions of the central path in $\kappa$ could yield more theta graphs that are not $i$-graphs.  However, the proof technique used for $\kappa$ no longer applies when, for example, a path between the degree $3$ vertices has length greater than $4$.  In \cite{BMT3}, we explore an alternative method for determining the $i$-graph realizability of theta graphs.


\section{Some Classes of $i$-Graphs}
\label{sec:i:basicGraphs}

Having studied several graphs that are not $i$-graphs, we now examine the problem of $i$-graph realizability from the positive direction.  To begin, it is easy to see that complete graphs are $i$-graphs; moreover, as with $\gamma$-graphs, complete graphs are their own $i$-graphs, i.e., $\ig{K_n} \cong K_n$.  

\begin{prop} \label{prop:comp} 
	Complete graphs are $i$-graph realizable. 
\end{prop}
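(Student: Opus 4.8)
The plan is to prove the slightly stronger statement already flagged in the surrounding text, namely that $\ig{K_n} \cong K_n$ for every $n \geq 1$; this yields realizability immediately, with $K_n$ serving as its own seed graph.

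First I would identify the $i$-sets of $K_n$. Since any two distinct vertices of $K_n$ are adjacent, a single vertex $\{v\}$ is already a maximal independent set, and no independent set contains more than one vertex. Hence $i(K_n) = 1$, and the $i(K_n)$-sets are precisely the $n$ singletons $\{v\}$ with $v \in V(K_n)$. In particular $|V(\ig{K_n})| = n$.

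Next I would check adjacency directly from the definition of $\ig{G}$: the vertices $\{u\}$ and $\{v\}$ of $\ig{K_n}$ are adjacent if and only if there is an edge $xy \in E(K_n)$ with $\{u\} = (\{v\} - x) \cup \{y\}$, which forces $x = v$, $y = u$, and $uv \in E(K_n)$. Because $K_n$ is complete, $uv \in E(K_n)$ holds for all distinct $u,v$, so every pair of distinct vertices of $\ig{K_n}$ is adjacent. Therefore $\ig{K_n} \cong K_n$, and $K_n$ is $i$-graph realizable; the degenerate case $n = 1$ is handled directly, since $\ig{K_1}$ consists of the single $i$-set $V(K_1)$.

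Since the argument is just an unwinding of the definitions, there is no real obstacle here; the only points needing a moment's care are verifying that the singletons are the \emph{only} $i$-sets (equivalently $i(K_n) = 1$) and noting the trivial case $n = 1$.
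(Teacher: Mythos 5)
Your proof is correct and follows exactly the route the paper intends: the paper states without proof that $\ig{K_n}\cong K_n$, and your argument simply fills in the routine verification that the $i$-sets of $K_n$ are the $n$ singletons and that any two of them are adjacent in the $i$-graph. Nothing is missing.
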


\noindent  Hypercubes $Q_n$ (the Cartesian product of $K_2$ taken with itself $n$ times) are also straightforward to construct as $i$-graphs, with $\ig{nK_2} \cong Q_n$.  Each $K_2$ pair can be viewed as a $0-1$ switch, with the vertex of the $i$-set in each component sliding between the two states.

\begin{prop} \label{prop:hCube} 
	Hypercubes are $i$-graph realizable. 
\end{prop}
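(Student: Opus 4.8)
The plan is to verify directly the claim made in the remark preceding the statement, namely that $\ig{nK_2} \cong Q_n$. Take $G = nK_2$, and write the $i$-th component as the edge $a_ib_i$ for $1 \le i \le n$. First I would pin down the $i$-sets of $G$. Since the components are pairwise non-adjacent, a set $S \subseteq V(G)$ is a maximal independent set if and only if $S$ meets each component $\{a_i,b_i\}$ in exactly one vertex: at least one vertex of $\{a_i,b_i\}$ is needed to dominate that component, and both cannot be chosen by independence. Hence every maximal independent set has cardinality $n$, so $i(G)=n$, and the $i$-sets of $G$ are in natural bijection with the binary strings $s \in \{0,1\}^n$, where we let $s_i = 0$ encode $a_i \in S$ and $s_i = 1$ encode $b_i \in S$.

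Next I would translate adjacency. The only edges of $G$ are the $a_ib_i$, so an $i$-set $S$ is adjacent in $\ig{G}$ to $S' = (S - x)\cup\{y\}$ precisely when $\{x,y\} = \{a_i,b_i\}$ for some $i$; that is, $S$ and $S'$ agree on every component except the $i$-th, on which the token slides from one endpoint to the other. In terms of the strings this says exactly that $S \sim S'$ in $\ig{G}$ if and only if the corresponding strings differ in exactly one coordinate, which is the adjacency relation defining $Q_n$. Therefore $S \mapsto s$ is a graph isomorphism $\ig{nK_2} \to Q_n$, and, since $n$ was arbitrary, hypercubes are $i$-graph realizable. For completeness one records the degenerate case $n=0$: $0\cdot K_2$ is the empty graph, whose unique $i$-set is $\varnothing$, so $\ig{0\cdot K_2} \cong K_1 = Q_0$.

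There is essentially no real obstacle here: the disjointness of the components makes both the identification of the $i$-sets with $\{0,1\}^n$ and the description of the admissible slide moves immediate, and no appeal to the structural lemmas of Section~\ref{sec:i:obs} is needed. The only points requiring (routine) care are the bookkeeping of the bijection between $i$-sets and binary strings and the observation that a single token slide corresponds to flipping a single coordinate.
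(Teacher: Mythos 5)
Your proof is correct and follows exactly the paper's construction: the paper also takes $G=nK_2$, views each component as a $0$--$1$ switch, and identifies the $i$-sets with binary strings so that a token slide flips one coordinate. You have simply written out the details that the paper leaves as a remark.
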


\noindent  Hypercubes are a special case of the following result regarding Cartesian products of $i$-graphs.

\begin{prop} \label{prop:i:cart}
	If $\ig{G_1} \cong H_1$ and $\ig{G_2} \cong H_2$, then $\ig{G_1 \cup G_2} \cong H_1 \boksie H_2$
\end{prop}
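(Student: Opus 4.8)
The plan is to exhibit an explicit graph isomorphism between $\ig{G_1 \cup G_2}$ and $H_1 \boksie H_2 \cong \ig{G_1}\boksie\ig{G_2}$ by first identifying the $i$-sets of $G_1\cup G_2$, and then checking the adjacency condition matches the Cartesian product edge rule. The first step is to observe that $i(G_1\cup G_2) = i(G_1)+i(G_2)$, and more precisely that a vertex subset $S\subseteq V(G_1)\cup V(G_2)$ is an $i$-set of $G_1\cup G_2$ if and only if $S = S_1\cup S_2$ where $S_j = S\cap V(G_j)$ is an $i$-set of $G_j$ for $j=1,2$. This is routine: independence and domination in a disjoint union decompose componentwise, maximal independent sets decompose as unions of maximal independent sets of the components, and minimizing cardinality forces each piece to be minimum. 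Hence the map $\Phi\colon (S_1,S_2)\mapsto S_1\cup S_2$ is a bijection from $V(H_1)\times V(H_2)$ to $V(\ig{G_1\cup G_2})$.

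Next I would verify that $\Phi$ preserves and reflects adjacency. Suppose $S = S_1\cup S_2$ and $T = T_1\cup T_2$ are $i$-sets of $G_1\cup G_2$. They are adjacent in $\ig{G_1\cup G_2}$ exactly when there is an edge $xy\in E(G_1\cup G_2)$ with $T = (S-x)\cup\{y\}$, i.e.\ $|S - T| = 1$ with the removed and added vertices joined by an edge. Since $E(G_1\cup G_2) = E(G_1)\cup E(G_2)$ and there are no edges between the two parts, such an edge $xy$ lies entirely inside one $G_j$. If $xy\in E(G_1)$, then $S_2 = T_2$ and $T_1 = (S_1 - x)\cup\{y\}$ with $xy\in E(G_1)$, which says precisely $S_1 \sim T_1$ in $H_1$ and $S_2 = T_2$; symmetrically for $xy\in E(G_2)$. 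Conversely, an edge of $H_1\boksie H_2$ from $(S_1,S_2)$ is either ``$S_1\sim T_1$ in $H_1$, $S_2=T_2$'' or ``$S_1 = T_1$, $S_2\sim T_2$ in $H_2$'', and in each case the witnessing edge of $G_1$ (resp.\ $G_2$) is an edge of $G_1\cup G_2$ giving the single-token slide. Thus $\Phi$ is an isomorphism.

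I do not expect a genuine obstacle here; the result is essentially bookkeeping once the decomposition $i(G_1\cup G_2)=i(G_1)+i(G_2)$ and the componentwise characterization of $i$-sets are in place. The one point that warrants a careful sentence rather than a hand-wave is the claim that \emph{every} $i$-set of the disjoint union splits as a union of $i$-sets of the components (rather than, say, trading cardinality between the two sides): this follows because $S\cap V(G_j)$ is necessarily a maximal independent dominating set of $G_j$, hence has cardinality at least $i(G_j)$, so $|S| = |S\cap V(G_1)| + |S\cap V(G_2)| \ge i(G_1)+i(G_2)$, with equality (forced by minimality of $S$) only when each piece is an $i$-set. Everything else is a direct translation between the token-slide definition of $\ig{\cdot}$ and the definition of the Cartesian product $\boksie$.
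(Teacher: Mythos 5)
Your proof is correct and follows essentially the same route as the paper: identify the $i$-sets of $G_1\cup G_2$ as exactly the unions $S_1\cup S_2$ of $i$-sets of the components, then check that a single token slide stays within one component and hence matches the Cartesian product adjacency rule. The paper states these two steps more tersely; your added justification that every $i$-set of the disjoint union must split into minimum pieces (via the cardinality inequality) is a detail the paper leaves implicit.
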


\begin{proof}
	Let $\{X_1, X_2, \dots, X_k\}$ be the $i$-sets of $G_1$ and let $\{Y_1, Y_2, \dots, Y_{\ell}\}$ be the $i$-sets of $G_2$.  Then, the $i$-sets of $G_1 \cup G_2$ are of the form $X_i \cup Y_j$.  Clearly $\edgeG{X_i\cup Y_j,,,X^*_{i^*}\cup Y^*_{j^*},G_1 \cup G_2}$ if and only if $\edgeG{X_i,,,X^*_i,G}$ and $Y_j = Y_j^*$, or $\edgeG{Y_j,,,Y_j^*,G_2}$ and $X_i=X_i^*$.		This gives a natural isomorphism to $H_1 \boksie H_2$, where $X_i \cup Y_j$ is the vertex $(X_i,Y_j)$.
\end{proof}	

\medskip

\noindent Moving to cycles, the constructions become markedly more difficult.  

\begin{prop} \label{prop:i:cycle} 
	Cycles are $i$-graph realizable. 
\end{prop}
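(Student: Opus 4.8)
The plan is to exhibit an explicit seed graph for each $C_n$: for $n\ge4$ I would take $G=\overline{C_n}$, the complement of the $n$-cycle, and show $\ig{G}\cong C_n$; the case $n=3$ is then handled separately by $C_3\cong K_3\cong\ig{K_3}$ (Proposition~\ref{prop:comp}). Note $\overline{C_3}=\overline{K_3}$ has only one $i$-set, so the main construction really does require $n\ge4$; when $n=4$ it degenerates to $\overline{C_4}=2K_2$, which is already covered via Proposition~\ref{prop:hCube} since $Q_2\cong C_4$, but the argument below applies to it verbatim.

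Throughout, fix $n\ge4$, put $V(G)=\mathbb{Z}_n$, and recall $i\sim_G j$ iff $i\ne j$ and $i-j\not\equiv\pm1\pmod n$. \emph{Step 1: determine the $i$-sets of $G$.} An independent set of $\overline{C_n}$ is a clique of $C_n$, hence a singleton or a consecutive pair; so $i(G)\le2$. No single vertex dominates $G$ (vertex $j$ dominates neither $j-1$ nor $j+1$), while every consecutive pair $S_j:=\{j,j+1\}$ is dominating, because the vertices outside $N_G[j]$ are $\{j-1,j+1\}$, those outside $N_G[j+1]$ are $\{j,j+2\}$, and for $n\ge4$ these two sets are disjoint. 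Hence $i(G)=2$ and the $i$-sets are exactly the $n$ distinct sets $S_0,\dots,S_{n-1}$.

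\emph{Step 2: determine $\ig{G}$.} Adjacent vertices of an $i$-graph correspond to $i$-sets differing in exactly one element (cf.\ Observation~\ref{obs:i:dist}), and among the $S_j$ one has $|S_j-S_k|=1$ precisely when $k\equiv j\pm1\pmod n$; so the only candidate edges of $\ig{G}$ join $S_j$ to $S_{j+1}$. For such a pair, $S_j$ and $S_{j+1}$ differ only in that $S_j$ contains $j$ whereas $S_{j+1}$ contains $j+2$, and since $j\sim_G(j+2)$ (as $j+2\not\equiv j\pm1\pmod n$ for $n\ge4$), the slide moving the token at $j$ to $j+2$ witnesses $S_j\sim S_{j+1}$ in $\ig{G}$. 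As the neighbours $S_{j-1},S_{j+1}$ of $S_j$ are distinct for $n\ge4$, it follows that $\ig{G}$ is precisely the cycle $S_0\sim S_1\sim\dots\sim S_{n-1}\sim S_0$, i.e.\ $\ig{\overline{C_n}}\cong C_n$.

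The enumeration of independent sets and the domination count are routine bookkeeping. The genuinely delicate point — and the reason cycles are harder than the classes handled earlier — is to find a seed whose $i$-sets are at the same time \emph{few} (exactly $n$) and \emph{rigid}, so that each $i$-set admits exactly two token slides; this forces the reconfiguration graph to be $2$-regular, and connectedness then pins it down to $C_n$. Complementing the cycle does exactly this: it confines the $i$-sets to the $n$ consecutive pairs and leaves a unique legal destination for each slide. The only loose ends are the boundary cases $n=3$ (construction degenerates, so use $K_3$) and $n=4$, and confirming the two neighbours $S_{j-1},S_{j+1}$ of $S_j$ really are distinct — which fails only for $n\le2$.
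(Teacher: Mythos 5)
Your construction is correct, and it is a genuinely different (and more uniform) route than the paper's. You should double-check nothing beyond what you already flagged: for $n\ge 4$ the independent sets of $\overline{C_n}$ really are the cliques of $C_n$ (singletons and consecutive pairs, since $C_n$ is triangle-free for $n\ge4$), no singleton dominates, every consecutive pair $S_j=\{j,j+1\}$ does, the only pairs of $i$-sets differing in one element are $S_j,S_{j\pm1}$, and the required slide $j\to j+2$ uses an edge of $\overline{C_n}$ precisely because $n\ge4$. All of this checks out, so $\ig{\overline{C_n}}\cong C_n$ for $n\ge4$, with $C_3\cong\ig{K_3}$ covering the remaining case.

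The paper instead proceeds case by case: $C_3$ from $K_3$, $C_4$ from $2K_2$, $C_5$ from $C_5$ itself, $C_6$ from $K_2\boksie K_3$, and for $k\ge7$ from the circulant in which $v_i\sim v_j$ iff $j\not\equiv i-2,\dots,i+2\pmod k$ --- i.e.\ the complement of the \emph{square} of the cycle, whose $i$-sets are the $k$ consecutive triples. Amusingly, your seed coincides with the paper's for $4\le n\le 6$ (note $\overline{C_4}=2K_2$, $\overline{C_5}\cong C_5$, $\overline{C_6}\cong K_2\boksie K_3$), and diverges only for $n\ge7$, where your $i$-sets have size $2$ rather than $3$ and the verification is correspondingly shorter: you need not argue that the middle token of a triple is frozen, and the enumeration of candidate $i$-sets is immediate from triangle-freeness of $C_n$. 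What the paper's $k\ge7$ construction buys is an illustration of the frozen-token analysis that recurs later, and it sits inside a broader family of complement-based constructions that the authors develop elsewhere (they explicitly note that graph complements are used in a companion paper to realize cycles, so your approach is a recognized alternative rather than an oversight on their part). Net assessment: your proof is complete and arguably cleaner.
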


\begin{proof}	
	The constructions for each cycle  $C_k$ for $k \geq 3$ are as described below.	
	\begin{enumerate}[itemsep=1pt, label=\bf (\roman*)]
		\item  $\ig{C_3} \cong C_3$ \\
		From Proposition \ref{prop:comp}.
		
		\item  $\ig{2K_2} \cong C_4$\\			
		From Proposition \ref{prop:hCube}.

		\item $\ig{C_5} \cong C_5$\\		
		Recall that $i(C_5)=2$.  A labelled $C_5$ and the resulting $i$-graph with $\ig{C_5}\cong C_5$ are given in Figure \ref{fig:i:C5C5} below.
		
		\begin{figure}[H]
			\centering
			\begin{tikzpicture}	
				
				\node (cent) at (0,0) {};
	
				\path (cent) ++(0:0 mm) node (C1)  {};		
				
				\foreach \i/\c in {1/90/, 5/162, 4/234, 3/306, 2/378} {
					\path (C1) ++(\c:10 mm) node[std] (v\i)  {};
					\path (v\i) ++(\c:5 mm) node (vL\i) {$x_{\i}$};  
					
				}			
				
				\draw (v1)--(v2)--(v3)--(v4)--(v5) --(v1);	
				
				\path (cent) ++(0:60 mm) node (C2)  {};		
				
				\foreach \i/\c/\x/\y in {1/90/1/4, 2/162/1/3, 3/234/3/5, 4/306/2/5, 5/378/2/4} {
					\path (C2) ++(\c:10 mm) node[std] (u\i)  {};
					\path (u\i) ++(\c:8 mm) node (uL\i) {$\{x_{\x},x_{\y}\}$};  
					
				}
				
				\draw (u1)--(u2)--(u3)--(u4)--(u5) --(u1);

			\end{tikzpicture}			
			\caption{ $C_5$ and $\ig{C_5}\cong C_5$.}
			\label{fig:i:C5C5}%
		\end{figure}

		\item  $\ig{K_2 \boksie K_3} \cong C_6$		
		
		\noindent	Label the vertices of $K_2 \boksie K_3$ as in Figure \ref{fig:i:C6} below.  The set $\{x_i,y_j\}$ is an $i$-set of $K_2 \boksie K_3$ if and only if $i \neq j$, so that $|V(\ig{K_2 \boksie K_3})| = 6$, and adjacencies are as in Figure \ref{fig:i:C6}. 
		
		\begin{figure}[H]
			\centering
			\begin{tikzpicture}	
				
				\node (cent) at (0,0) {};
				
				\foreach \i/\c in {3/-10,2/-20,1/-30} {
					\path (cent) ++(\c mm, -5 mm) node[std] (x\i)  {};
					\path (x\i) ++(270:5 mm) node (xL\i) {$x_{\i}$};
					\path (cent) ++(\c mm, 5 mm) node[std] (y\i) {};
					\path (y\i) ++(90:5 mm) node (yL\i) {$y_{\i}$};
					\draw (x\i)--(y\i);				
				}
				
				\draw(x1)--(x3);
				\draw(y1)--(y3);
				
				\draw(x1) to [out=-20,in=200](x3);
				\draw(y1) to [out=20,in=160](y3);				
				
				\path (cent) ++(0:40 mm) node (C2)  {};		
				
				\foreach \i/\c/\x/\y in {1/90/1/2, 2/150/1/3, 3/210/2/3, 4/270/2/1, 5/330/3/1, 6/30/3/2} {
					\path (C2) ++(\c:10 mm) node[std] (u\i)  {};
					\path (u\i) ++(\c:8 mm) node (uL\i) {$\{x_{\x},y_{\y}\}$};  
					
				}
				
				\draw (u1)--(u2)--(u3)--(u4)--(u5)--(u6)--(u1);

			\end{tikzpicture}			
			\caption{ $K_2 \; {\scriptstyle\square} \; K_3$ and $\ig{K_2 \; {\scriptstyle\square} \; K_3} \cong C_6$.}
			\label{fig:i:C6}%
		\end{figure}

		
		\item \emph{For any $k \geq 7$, construct the graph $H$ with $V(H) = \{v_0,v_1, \dots,v_{k-1}\}$, and $v_iv_j \in E(H)$ if and only if $j \not\equiv i-2, i-1,i,i+1,i+2 \Mod k$.  Then $\ig{H} \cong C_k$.}

		For convenience, we assume that all subscripts are given modulo $k$.  Thus in $H$, for all $0 \leq i \leq k-1$, we have the following:
		\begin{enumerate}[label=(\Roman*)]
			\item $N[v_i] \backslash N[v_{i+1}] = \{v_i,v_{i+3}\}$ \label{prop:i:cycle:k:1}
			\item $N[v_{i+1}] \backslash N[v_i] = \{v_{i-2}, v_{i+1}\}$. \label{prop:i:cycle:k:2}
		\end{enumerate}
		
		 Since $H$ is vertex-transitive, suppose that $v_i$ is in some $i$-set $S$.  Then $v_{i-2},v_{i-1},v_{i+1},$ and $v_{i+2}$ are not dominated by $v_i$.  To dominate $v_{i+1}$, either $v_{i+1}$ or $v_{i-2}$ is in $S$, because all other vertices in $N[v_{i+1}]$ are also adjacent to $v_i$, as in \ref{prop:i:cycle:k:2}.  Begin by assuming that $v_{i+1} \in S$.  Now since $\{v_i,v_{i+1}\}$ dominates all of $H$ except $v_{i+2}$ and $v_{i-1}$, and $N({v_{i+2},v_{i-1}}) \subseteq N(v_i,v_{i+1})$, either $v_{i+2}$ or $v_{i-1}$ is in $S$.  Thus $S = \{v_i,v_{i+1},v_{i+2}\}$ or $S = \{v_{i-1},v_i,v_{i+1} \}$.
		
		Suppose now instead that $v_{i-2} \in S$.     Now, only $v_{i-1}$ is not dominated by $\{v_i, v_{i-2}\}$; moreover, since $N(v_{i-1}) \subseteq N(\{v_{i-2}, v_{i}\})$, we have that $v_{i-1} \in S$, and so  $S = \{v_{i-2}, v_{i-1}, v_i\}$.
		Combining the above two cases yields that $i(H) =3$ and that all $i$-sets of $H$ have the form $S_i=\{v_i, v_{i+1}, v_{i+2}\}$, for each $ 0  \leq i \leq k-1$.  Moreover, as there are $k$ unique such sets, it follows that $|V(\ig{H})| = k$.

		We now consider the adjacencies of $\ig{H}$.  From our set definitions,  $\edge{S_i, v_{i-1}, v_{i+2}, S_{i+1}}$, and $\edge{S_i, v_{i+1}, v_{i-2}, S_{i-1}}$.  To see that $S_i$ is not adjacent to any other $i$-set in $H$, notice that the token at $v_i$ is frozen; $N(v_i)) \subseteq (N(v_{i-1}) \cup N(v_{i+1}))$.  Moreover, by \ref{prop:i:cycle:k:2}, the token at $v_{i+1}$ can only slide to $v_{i-2}$, and likewise, the token at $v_{i-1}$ can only slide to $v_{i+2}$.  Thus $\adjL{S_i,S_{i+1},S_{i-1},S_{i}}$, and so $\ig{H} \cong C_k$ as required.	
	\end{enumerate}		
	This completes the $i$-graph constructions for all cycles.
\end{proof}

\medskip


The constructions presented in Proposition \ref{prop:i:cycle} are not unique.  Brewster, Mynhardt and Teshima show in \cite{BMT2} that for $k \geq 5$ and $k \equiv 2 \Mod 3$, $\ig{C_k} \cong C_k$, and in \cite{BMT3} they use graph complements to construct graphs with $i$-graphs that are cycles.   

We now present three lemmas with the eventual goal of demonstrating that all forests are $i$-graphs.  When considering the  $i$-graph of some graph  $H$, if a vertex $v$ of some $i$-set $S$ of $H$ has no external private neighbours, then the token at $v$ is frozen.  In the first of the three lemmas,  Lemma \ref{lem:epnAdd}, we construct a new seed graph for a given target graph, where each vertex of the seed graph's $i$-set has a non-empty private neighbourhood.  


\begin{lemma} \label{lem:epnAdd}
	For any graph $H$, there exists a graph $G$ such that $\ig{G} \cong \ig{H}$ and for any $i$-set $S$ of $G$, all $v\in S$ have $\mathrm{epn}(v,S) \neq \varnothing$.
\end{lemma}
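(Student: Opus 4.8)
The plan is to build $G$ from $H$ by attaching to every vertex a small gadget that forces that vertex into every $i$-set and simultaneously supplies each original $i$-set vertex with a guaranteed external private neighbour. Concretely, I would take the gadget to be a pendant edge or a pendant path of length two: replace each vertex $v \in V(H)$ by $v$ together with a new path $v\, a_v\, b_v$ (so $a_v$ is adjacent only to $v$ and $b_v$, and $b_v$ is a leaf). Call the resulting graph $G$. The leaf $b_v$ forces any maximal independent set to contain either $a_v$ or $b_v$; since including $b_v$ instead of $a_v$ never helps and $a_v$ dominates $\{v,a_v,b_v\}$, the minimum independent dominating sets of $G$ are precisely the sets $S \cup \{a_v : v \in V(H)\}$ where $S$ is an $i$-set of $H$ — so $i(G) = i(H) + |V(H)|$. (One must check that no smaller independent dominating set exists; this follows because the $|V(H)|$ pendant paths are vertex-disjoint and each independently requires one vertex to dominate its leaf, and the remaining choices in the ``$H$-part'' must still dominate $H$.)

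Next I would verify the isomorphism $\ig{G} \cong \ig{H}$. Each $i$-set of $G$ has the form $T = S \cup \{a_v : v\in V(H)\}$ with $S$ an $i$-set of $H$. The tokens on the $a_v$ are all frozen: $a_v$'s only neighbours are $v$ and $b_v$, and sliding to either one fails to dominate $b_v$ (if we slide to $v$) or is non-minimal/disconnected (if we slide to $b_v$, then $v$ needs domination from $S$, but then we could have kept $a_v$ — more carefully, $(T - a_v)\cup\{b_v\}$ is either not dominating or not of minimum size). So the only available slides in $G$ happen within the $H$-part, along edges of $H$, and a slide $\edge{T,x,y,T'}$ with $xy\in E(H)$ corresponds exactly to the slide $\edge{S,x,y,S'}$ in $H$; conversely every edge of $\ig{H}$ lifts. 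Hence the map $S \mapsto S\cup\{a_v:v\in V(H)\}$ is a graph isomorphism from $\ig{H}$ to $\ig{G}$.

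Finally, the external private neighbour condition. Let $T = S\cup\{a_v : v\in V(H)\}$ be any $i$-set of $G$. For each $a_v$, the leaf $b_v$ is dominated only by $a_v$ in $T$, so $b_v \in \mathrm{epn}(a_v,T)$. For each $x \in S\subseteq V(H)$, its attached vertex $a_x$ is adjacent (in $G$) only to $x$ and $b_x$; since $b_x\notin T$ and no vertex of $S$ other than $x$ is adjacent to $a_x$, we get $a_x\in N[x]-N[T-\{x\}] = \mathrm{pn}(x,T)$, and as $a_x\neq x$, in fact $a_x\in\mathrm{epn}(x,T)$. Thus every vertex of $T$ has a nonempty external private neighbourhood, as required.

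The main obstacle I anticipate is the minimality argument for $i(G)$ — i.e., ruling out independent dominating sets of $G$ that are smaller than $i(H)+|V(H)|$ by cleverly ``sharing'' coverage between the $H$-part and the gadgets, or by omitting some $a_v$ in favour of $v$ itself. The pendant-path-of-length-two gadget is chosen precisely to defeat this: the leaf $b_v$ is at distance $2$ from $v$, so $v$ alone cannot dominate it, and any dominating set must spend a vertex in $\{v,a_v,b_v\}$ beyond what is needed to dominate $H$ — making the gadgets' cost additive and independent of the choice of $S$. Once that lower bound is pinned down, the rest is bookkeeping about frozen tokens.
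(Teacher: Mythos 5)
Your construction fails at the very first step: the sets $T = S \cup \{a_v : v \in V(H)\}$ that you claim are the $i$-sets of $G$ are not even independent, since any $x \in S$ is adjacent in $G$ to its own attached vertex $a_x$, and both lie in $T$. Worse, the obstacle you flag at the end is in fact fatal. Because $a_v$ is adjacent to $v$, the set $A = \{a_v : v \in V(H)\}$ by itself dominates every $v$ (via $a_v$), every $a_v$ (via itself) and every $b_v$ (via $a_v$), and it is independent; since the $|V(H)|$ pairs $\{a_v,b_v\}$ are disjoint and each leaf $b_v$ forces a vertex of $\{a_v,b_v\}$ into any dominating set, $A$ is the unique minimum independent dominating set of $G$ (choosing $b_v$ instead of $a_v$ leaves $v$ undominated). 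Hence $i(G) = |V(H)|$, the original graph $H$ plays no role, and $\ig{G} \cong K_1$ rather than $\ig{H}$. The root of the problem is that a pendant path of length two hung on $v$ supplies a cheap substitute dominator of $v$, so the gadgets' cost is not additive on top of $i(H)$ --- it replaces $i(H)$ entirely.

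The paper avoids this by working locally and by making the new vertices expensive to use: for each offending pair (an $i$-set $S$ and a vertex $v \in S$ with $\mathrm{epn}(v,S) = \varnothing$) it adds \emph{two} new vertices $a$ and $b$, each joined to \emph{all} of $N[v]$, not just to $v$. Then every $i$-set of $H$ already dominates $a$ and $b$ (it must contain a vertex of $N[v]$ in order to dominate $v$), and any independent dominating set containing $a$ must also contain $b$ and exclude all of $N[v]$, whence replacing $\{a,b\}$ by $\{v\}$ produces a strictly smaller independent dominating set --- so the $i$-sets are unchanged, and $a,b$ become external private neighbours of $v$ with respect to $S$. If you want to salvage a gadget-per-vertex idea, the gadget attached at $v$ must not be able to dominate $v$ more cheaply than $H$ itself can; your $a_v$ does exactly that.
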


\begin{proof}
	Suppose $S$ is an $i$-set of $H$ having some $v \in S$ with $\mathrm{epn}(v,S) = \varnothing$.  Construct the graph $G_1$ from $H$ by joining new vertices $a$ and $b$ to each vertex of $N[v]$.  
	
	To begin, we show that the $i$-sets of $G_1$ are exactly the $i$-sets of $H$.  Let $R$ be some $i$-set of $H$ and say that $v$ is dominated by $u\in R$.  Then $u \in N[v]$, so $u$ also dominates $a$ and $b$ in $G_1$; therefore, $R$ is independent and dominating in $G_1$, and so $i(G_1) \leq i(H)$.  Conversely, suppose that $Q$ is an $i$-set of $G_1$.  If neither $a$ nor $b$ is in $Q$, then $Q$ is an independent dominating  set of $H$, and so $i(H) \leq i(G_1)$.  Hence, suppose instead that $a \in Q$.  Notice that since $Q$ is independent and $a$ is adjacent to each vertex in $N(v)$, $N_H[v] \cap Q = \varnothing$.  Some vertex in $Q$ dominates $b$; however, since $N_H(a) = N_H(b)$ and $Q$ is independent, it follows that $b$ is self-dominating and so $b\in Q$.  	
	However, since $N_{G_1}[\{a,b\}] = N_{G_1}[v]$, the set $Q' = (Q - \{a,b\}) \cup \{v\}$ is an independent dominating set of $G_1$ such that $|Q'| < |Q|$, a contradiction.  Thus, $i(G_1) = i(H)$ and the $i$-sets of $H$ and $G_1$ are identical.  In particular, $S$  is an $i$-set of $G_1$, and moreover, $\mathrm{epn}_{G_1}(v,S) = \{a,b\}$.	
	
	By repeating the above process for each $i$-set of $G_j$, $j\geq 1$, that contains a vertex $v$ with $\mathrm{epn}(v,S) = \varnothing$, we eventually obtain a graph $G=G_k$ such that for each $i$-set of $S$ of $G$ and each vertex $v \in S$, $\mathrm{epn}_G(v,S) \neq \varnothing$.  Since the $i$-sets of $H$ and $G$ are identical and $H$ is a subgraph of $G$, $\ig{G} = \ig{H}$ as required.  	
\end{proof}

\medskip

Next on our way to constructing forests, we demonstrate that given an $i$-graph, the graph obtained by adding any number of isolated vertices is also an $i$-graph.


\begin{lemma} \label{lem:vAdd}
	If $H$ is the $i$-graph of some graph $G$, then there exists some graph $G^*$ such that $\ig{G^*} = H \cup \{v\}$.  
\end{lemma}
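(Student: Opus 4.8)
The plan is to realize $H \cup \{v\}$ as the $i$-graph of a join. Given a graph $G$ with $\ig{G} \cong H$, set $m = i(G)$ and take $G^{*} = G \vee \overline{K_{m}}$, writing $W = V(\overline{K_{m}})$; the new isolated vertex of the target graph will correspond to the $i$-set $W$ of $G^{*}$. Before this, I would reduce to the case $m \geq 2$: appending an isolated vertex to $G$ forces that vertex into every $i$-set and alters no adjacency in the $i$-graph, so $\ig{G \cup K_{1}} \cong \ig{G}$ while $i(G \cup K_{1}) = i(G) + 1$ (equivalently, Proposition~\ref{prop:i:cart} with $\ig{K_{1}} \cong K_{1}$ and $H_{1} \boksie K_{1} \cong H_{1}$). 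Iterating if necessary, we may assume $m = i(G) \geq 2$.

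The first substantive step is to pin down the $i$-sets of $G^{*}$. If an independent dominating set $T$ of $G^{*}$ meets $V(G)$, then it avoids $W$ (every vertex of $G$ is adjacent to all of $W$), so $T \subseteq V(G)$ and $T$ dominates $G$, whence $|T| \geq i(G) = m$; if $T$ misses $V(G)$, then $T \subseteq W$, and since the only vertex of $W$ that dominates a given vertex of $W$ is itself, $T = W$ and $|T| = m$. Conversely, each $i(G)$-set, viewed as a subset of $V(G)$, and the set $W$ itself are independent dominating sets of $G^{*}$ of size $m$. Hence $i(G^{*}) = m$, and $V(\ig{G^{*}})$ is exactly the set of $i$-sets of $G$ together with the single new vertex $W$; in particular $|V(\ig{G^{*}})| = |V(H)| + 1$.

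The second substantive step is to check adjacencies. Fix an $i$-set $S$ of $G$ and $v \in S$. Since $|S| = m \geq 2$, the set $S - \{v\}$ is nonempty, so $N_{G^{*}}[v]$ and $N_{G^{*}}[S - \{v\}]$ both contain all of $W$; consequently $\mathrm{pn}(v,S)$ and $\mathrm{epn}(v,S)$ are unchanged when computed in $G^{*}$ rather than $G$, and a token on $S$ can never slide onto a vertex of $W$ (the resulting set would contain a vertex of $G$ adjacent to that $W$-vertex). As $G$ is an induced subgraph of $G^{*}$, Observation~\ref{obs:i:edge} then shows that the subgraph of $\ig{G^{*}}$ induced on the old $i$-sets is precisely $\ig{G} = H$. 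For the new vertex, if $q \in W$ then $N_{G^{*}}[q] = \{q\} \cup V(G)$ while $N_{G^{*}}[W - \{q\}] = (W - \{q\}) \cup V(G)$ (again using $m \geq 2$), so $\mathrm{epn}(q,W) = \varnothing$ for every $q \in W$; by Observation~\ref{obs:i:edge}, $W$ is an isolated vertex of $\ig{G^{*}}$. Combining, $\ig{G^{*}} \cong H \cup \{v\}$ with $v$ the vertex corresponding to $W$.

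The proof is short, and the one point that must be handled with care is the reduction to $m \geq 2$: when $i(G) = 1$ the set $W = \{q\}$ is a single vertex, and the token on $q$ can slide off along an edge of $G^{*} = G \vee K_{1}$, so $W$ would not be an isolated vertex of $\ig{G^{*}}$. Having $m \geq 2$ is exactly what makes every vertex of $W$ have empty external private neighbourhood, and hence freezes that $i$-set.
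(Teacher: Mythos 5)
Your proposal is correct and follows essentially the same route as the paper: join $G$ (after first appending an isolated vertex if $i(G)=1$ to force $i\geq 2$) to an independent set $W$ of size $i(G)$, observe that the $i$-sets of the join are exactly the old ones plus $W$, and note that $W$ is frozen. Your verification that the adjacencies among the old $i$-sets are unchanged is slightly more explicit than the paper's, but the construction and key observations are identical.
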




\begin{proof}
	First assume that $i(G) \geq 2$.  Let $V(G) = \{v_1, v_2, \dots, v_n\}$ and let $W$ be an independent set of size $i(G)=k$ disjoint from $V(G)$, say $W=\{w_1, w_2, \dots, w_k \}$.  Construct a new graph $G^*$ by taking the join of $G$ with the vertices of $W$, so that $G^* = G \vee W$.

	
	Notice that $W$ is independent and dominating in $G^*$.  Moreover, if an $i$-set $S$ of $G^*$ contains any vertex $w_i$ of $W$, since $W$ is independent and each vertex of $W$ is adjacent to all of $\{v_1, v_2, \dots, v_n\}$, it follows that $S$ contains all of $W$, and so, $S=W$.  That is, if an $i$-set of $G^*$ contains any vertex of $W$, it contains all of $W$.  Thus, $i(G) = i(G^*)$. 
	Furthermore, any $i$-set of $G$ is also an $i$-set of $G^*$, and so the $i$-sets of $G^*$ comprise of $W$ and the $i$-sets of $G$.  That is, $V(\ig{G^*}) = V(\ig{G}) \cup \{W\} = V(H) \cup \{W\}$.
	
	If $S$ is an $i$-set of $G$, then $S \cap W = \varnothing$.  Thus, $W$ is not adjacent to any other $i$-set in $\ig{G^*}$.  	 Relabelling the vertex representing the $i$-set $W$ in $G^*$ as $v$ in $\ig{G^*}$ yields  $\ig{G^* } = H \cup \{v\}$ as required.
	
	
	



	
	
	If $i(G)=1$, then $G$ has a dominating vertex; begin with $G \cup K_1$, which has $\ig{G} = \ig{G \cup K_1}$ and $i(G\cup K_1)=2$, and then proceed as above.	 
\end{proof}


\medskip

As a final lemma before demonstrating the $i$-graph realizability of forests, we show  that a pendant vertex can be added to any $i$-graph to create a new $i$-graph.

\begin{lemma} \label{lem:leafAdd}
	If $H$ is the $i$-graph of some graph $G$, and $H_u$ is the graph $H$ with some pendant vertex $u$ added, then there exists some graph $G_u$ such that $\ig{G_u} = H_u$.  
\end{lemma}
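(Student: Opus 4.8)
The plan is to build $G_u$ from $G$ by attaching, at a well-chosen location, a small gadget that creates exactly one new $i$-set and attaches it pendant-style to one designated vertex of $H$. Suppose $u$ is to be made adjacent to the vertex of $H$ corresponding to the $i$-set $S_0$ of $G$. First I would invoke Lemma~\ref{lem:epnAdd} to replace $G$ by a graph in which every vertex of every $i$-set has a nonempty external private neighbourhood; this costs nothing since the $i$-graph is unchanged, and it guarantees that $S_0$ has some vertex $v$ with $x \in \mathrm{epn}(v, S_0)$ — a vertex we can ``push the token onto'' to reach the new set. The idea is then to add a new pendant-type structure hanging off $x$ (or off $v$ and $x$) so that the only new $i$-set of $G_u$ is one obtained from $S_0$ by a single token slide, and this new set is frozen except for the slide back to $S_0$.

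Concretely, the key steps in order: (1) Fix $S_0$, $v \in S_0$, and $x \in \mathrm{epn}(v,S_0)$. (2) Attach a gadget $F$ at $x$ — a natural candidate is a single new leaf $z$ adjacent only to $x$, possibly together with one or two auxiliary vertices adjacent to $N[x]$ in the style of Lemma~\ref{lem:epnAdd} to control domination — chosen so that (a) $i(G_u) = i(G)$, (b) every old $i$-set of $G$ remains an $i$-set of $G_u$ (this is automatic if the gadget is dominated by $N[x] \subseteq N[S_0]$ for $S_0$, and one checks the other old $i$-sets dominate the gadget too, which may require the auxiliary vertices), and (c) there is exactly one genuinely new $i$-set $S^{*}$, namely one reachable from $S_0$ by sliding the token at $v$ to $x$ (now legal because $x$'s private-neighbour role is covered, and $z$ is dominated by $x$). (3) Verify that $S^{*}$ is adjacent in $\ig{G_u}$ only to $S_0$: the token now at $x$ can slide back to $v$, and I must show every other token in $S^{*}$ is frozen and the token at $x$ cannot slide anywhere but $v$ — this is where the gadget must be engineered carefully, e.g. by making $z$ a leaf so no token can usefully move into the gadget, and by arguing the remaining tokens of $S^{*} = (S_0 - v)\cup\{x\}$ inherit frozenness from $S_0$. (4) Conclude $\ig{G_u} \cong \ig{G} \cup \{S^{*}\}$ with $S^{*}$ pendant to $S_0$, i.e. $\ig{G_u} = H_u$.

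The main obstacle I anticipate is controlling the $i$-sets of $G_u$ so that \emph{no unintended new $i$-sets} appear and \emph{no old $i$-sets are destroyed}: adding vertices near $x$ can both create spurious small dominating sets (if the gadget is under-connected) and break domination of the gadget by far-away $i$-sets of $G$ (if it is under-connected in a different way), so the gadget's adjacencies must be tuned — likely by joining auxiliary vertices to all of $N[x]$, echoing the device in Lemma~\ref{lem:epnAdd} — and the frozenness of $S^{*}$'s other tokens must be checked against the new edges. A secondary subtlety is the case $i(G) = 1$, which should be handled as in Lemma~\ref{lem:vAdd} by first passing to $G \cup K_1$ to ensure $i(G) \geq 2$ before applying Lemma~\ref{lem:epnAdd} and the construction.
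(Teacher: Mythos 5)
There is a genuine gap, and it lies at the heart of your construction: the intended new pendant $i$-set $S^{*}=(S_{0}-v)\cup\{x\}$ is a subset of $V(G)$, and no gadget of the kind you describe can make such a set into a \emph{new} $i$-set. Since your gadget only adds new vertices (and edges incident to them), $G$ remains an induced subgraph of $G_u$, so a set $S^{*}\subseteq V(G)$ is independent in $G_u$ iff it is independent in $G$, and it dominates $V(G)$ in $G_u$ iff it dominates $V(G)$ in $G$. Hence if $|S^{*}|=i(G)=i(G_u)$ and $S^{*}$ is an $i$-set of $G_u$, it was already an $i$-set of $G$ --- and then, since $|S_{0}\bigtriangleup S^{*}|=2$ and $vx\in E(G)$, it was already a vertex of $H$ adjacent to $S_{0}$, so nothing new has been created. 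Conversely, if $(S_{0}-v)\cup\{x\}$ was not an $i$-set of $G$ (because $x$ fails to dominate some vertex of $\mathrm{pn}(v,S_{0})$), it cannot become one by hanging leaves or auxiliary vertices off $x$: the undominated vertex is still undominated. So the new $i$-set must contain at least one genuinely new vertex of $G_u$, and your plan of ``sliding the token from $v$ onto an external private neighbour'' cannot produce the pendant vertex. A secondary problem is your step (3): the tokens of $S^{*}-\{x\}=S_{0}-\{v\}$ do not ``inherit frozenness from $S_{0}$,'' because $S_{0}$'s tokens need not be frozen at all ($w$ may have large degree in $H$); you would separately have to rule out every slide of a token of $S_{0}-\{v\}$ inside $S^{*}$, which is an unaddressed and generally false requirement for an arbitrary choice of $x$.

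For comparison, the paper's construction accepts that the new $i$-set must use new vertices and that $i$ must grow: it attaches a new leaf $x_{i}$ to \emph{every} vertex $v_{i}$ of the $i$-set $W$ corresponding to the stem $w$, joins all the $x_{i}$ to a new hub $y$, and hangs a leaf $z$ on $y$. Then $i(G_u)=i(G)+1$, every old $i$-set $S$ survives as $S\cup\{y\}$ (with the token on $y$ frozen unless $S=W$), and the unique new $i$-set is $W\cup\{z\}$, adjacent only to $W\cup\{y\}$ via the slide from $z$ to $y$. Your instinct to invoke Lemma~\ref{lem:epnAdd} first is exactly right, but its role in the paper is different from the one you assign it: it is used to kill spurious $i$-sets that would otherwise mix some of the $x_{i}$ with part of $W$, not to legitimize a token slide within $V(G)$.
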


\begin{proof}
	By Lemma \ref{lem:epnAdd} we may assume that for any $i$-set $S$ of $G$, $\mathrm{epn}(v,S)\neq \varnothing$ for all $v \in S$. 	
 To construct $G_u$, begin with a copy of $G$.  
 If $w$ is the stem of $u$ in $H_u$, then consider the $i$-set $W=\{v_1,v_2,\dots,v_k\}$ in $G$ corresponding to $w$.  
 To each $v_i\in W$, attach a new vertex $x_i$ for all $1 \leq i \leq k$.  
 Then join each $x_i$ to a new vertex $y$, and then to $y$, add a final pendant vertex $z$.  
 Thus $V(G_u) = V(G) \cup \{x_1,x_2,\dots, x_k,y,z\}$ as in Figure \ref{fig:pendAdd}.

	\begin{figure}[H]
		\centering
		\begin{tikzpicture}	[scale=.8]				
			\node(cent) at (0,0) {};
			\path (cent) ++(0:20 mm) node (Wcent)  {};			
			\path (cent) ++(0:0 mm) node (Gcent)  {};	
			\path (cent) ++(0:40 mm) node (Xcent)  {};	
			\path (cent) ++(0:60 mm) node (Ycent)  {};
			
			\foreach \label/\rad in {1/0,2/1,3/2,4/3}
			{\path (Wcent) ++(90:\label*6 mm) node[std] (w\label)  {};				
				\path (w\label) ++(30: 4mm) node (wL\label)  {$v_{\label}$};
				
			}
			
			\foreach \label/\rad in {1/0,2/1,3/2,4/3}
			{
				\path (w\label) ++(0: 20mm) node[std] (x\label)  {};	
				\draw[thick] (w\label)--(x\label);
				
				\path (x\label) ++(150: 4mm) node (xL\label)  {$x_{\label}$};
			}
			
			\foreach \label/\rad in {1/0,2/1,3/2}
			{
				\path (Gcent) ++(90:4+6*\label mm) node[std] (g\label)  {};
				
			}
			
			\path (Ycent) ++(90: 15mm) node[std] (y)  {};	
			\path (y) ++(90: 5mm) node (yL)  {$y$};
			
			\foreach \label/\rad in {1/0,2/1,3/2,4/3}
			{
				\draw[thick] (y)--(x\label);
			}
			
			\path (y) ++(0: 15mm) node[std] (z)  {};	
			\path (z) ++(90: 5mm) node (zL)  {$z$};		
			\draw[thick](z)--(y);	
			
			\begin{pgfonlayer}{blob}			
				\draw[draw=mgray,  fill=white, dashed] \convexpath{w1,w4}{6mm};
				\path (w1) ++(-90: 8mm) node (WL)  {$W$};
				
				\draw[draw=mgray,  fill=white, dashed] \convexpath{g1,g3}{5mm};
				\path (g1) ++(-90: 6mm) node (GL)  {$G-W$};	
				
			\end{pgfonlayer}
			
			\begin{pgfonlayer}{bedge}

				\draw[draw=gray,  fill=lgray] \convexpath{w4,w1,g1,g3}{10mm};
				
			\end{pgfonlayer}

			\begin{pgfonlayer}{bedge}	
				\foreach \i in {1,2,3}
				\path (g\i) ++(0:5 mm) coordinate (gh\i)  {};
				\foreach \i in {1,2,3,4}	
				\path (w\i) ++(0:-5 mm) coordinate (wh\i)  {};

				\foreach \i in {1,2,3}			
				{
					\foreach \j in {1,2,3,4}	
					\draw[dashed, mgray] (gh\i)--(wh\j);
				}
			\end{pgfonlayer}
			
			\path (cent) ++(0:9 mm) coordinate (GLcent)  {};	
			\path (GLcent) ++(90:28 mm) node (GLcent)  {$G$};

		\end{tikzpicture}			
		\caption{The construction of $G_u$ from $G$  in Lemma \ref{lem:leafAdd}.}
		\label{fig:pendAdd}%
	\end{figure}
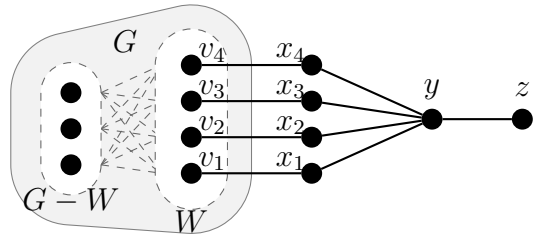
	
	It is easy to see that if $S$ is an $i$-set of $G$, then $S_y=S\cup\{y\}$ is an independent dominating set of $G_u$.  The set $W_z = W \cup \{z\}$ is also an independent dominating set of $G_u$.  Thus,  $i(G_u) \leq i(G)+1$.  It remains only to show that these are $i$-sets and the only $i$-sets of $G_u$.

	We claim that no $x_i$ in $X = \{x_1,x_2,\dots, x_k\}$ is in any $i$-set of $G_u$.  To show this, suppose to the contrary that $S^*$ is an $i$-set with $S^* \cap X = \{x_1,x_2,\dots, x_{\ell}\}$ for some $1 \leq \ell \leq k$.  Then, $y \notin S^*$; that is, $\{y,z\} \cap S^* = \{z\}$.  To dominate the remaining $\{x_{\ell+1},x_{\ell+2},\dots, x_k\}$, we have that $S^*=\{x_1,x_2,\dots,x_{\ell}\}\cup \{v_{\ell+1},v_{\ell+2},\dots, v_k\} \cup \{z\}$.  Recall from our initial assumption on $G$ that there exists some $v_1^* \in \mathrm{epn}_G(v_1,W)$.  Thus, $v_1^* \notin (N_{H_u}[\{v_{\ell+1},v_{\ell+2},\dots, v_k\}] \cap V(G))$, and so $v_1^*$ is undominated by $S^*$, which implies that $S^*$ is not an $i$-set.   
	
	Thus in every $i$-set of $G_u$, $y$ is dominated either by itself or by $z$.  If $y$ is not in a given $i$-set $S$ (and so $z\in S$), then to dominate $X$, $W\subseteq S$, and so $S = W \cup \{z\}$.  Conversely, if $y\in S$ (and $z\notin S$), then since the vertices of $G$ can only be dominated internally, $S$ is an $i$-set of $G_u$ if and only if $S-\{y\}$ is an $i$-set of $G$, which completes the proof of our claim.
	
	If  $u^*$ and $w^*$ are the vertices in $\ig{G_u}$ associated with $W_z$ and $W_y = W\cup \{y\}$ respectively, then clearly $\ig{G_u} - \{u^*\} \cong \ig{G}$.  Furthermore, since  $W_y$ is the only $i$-set  with $|W_y - W_z| = 1$ and $yz\in E(G_u)$, it follows that $\deg(u^*)=1$ and $u^*w^* \in E(\ig{G_u})$, and we conclude that $\ig{G_u} \cong H_u$.
\end{proof}

\medskip

\noindent Finally, we amalgamate the previous lemmas on adding isolated and pendant vertices to $i$-graphs to demonstrate that forests are $i$-graphs.

\begin{theorem} \label{thm:i:forests}
	All forests are  $i$-graph  realizable.
\end{theorem}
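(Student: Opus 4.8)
The plan is to prove this by induction on the number of vertices $n = |V(F)|$, using Lemmas \ref{lem:vAdd} and \ref{lem:leafAdd} as the engine of the induction. The base case is $n = 1$: the single-vertex graph $K_1$ has exactly one $i$-set, so $\ig{K_1} \cong K_1$, and hence the one-vertex forest is $i$-graph realizable. (More generally every graph has at least one $i$-set, so every $i$-graph is nonempty, matching the convention that $F$ is nonempty.)

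For the inductive step, suppose $n \geq 2$ and that every forest on fewer than $n$ vertices is an $i$-graph. If $F$ has an isolated vertex $v$, then $F - v$ is a forest on $n-1$ vertices, so by the inductive hypothesis there is a graph $G$ with $\ig{G} \cong F - v$; Lemma \ref{lem:vAdd} then gives a graph $G^{*}$ with $\ig{G^{*}} \cong (F - v) \cup \{v\} \cong F$. If $F$ has no isolated vertex, then every component of $F$ is a tree on at least two vertices, so $F$ has a leaf $u$ with stem $w$ say. Then $F - u$ is a forest on $n-1$ vertices, so by induction $\ig{G} \cong F - u$ for some $G$, and $F$ is exactly the graph obtained from $F - u$ by adding the pendant vertex $u$ at $w$; Lemma \ref{lem:leafAdd} then produces a graph $G_u$ with $\ig{G_u} \cong F$. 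In either case $F$ is $i$-graph realizable, and the induction is complete.

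I do not expect a genuine obstacle here: once Lemmas \ref{lem:vAdd} and \ref{lem:leafAdd} are available, this is a routine structural induction. The only points needing (minor) care are recording the base case $\ig{K_1} \cong K_1$ so that the induction has a starting point, and the elementary observation that a forest with no isolated vertex must contain a leaf, which guarantees that at each step exactly one of the two lemmas applies. The real work has already been carried out in proving those lemmas — in particular the private-neighbour setup of Lemma \ref{lem:epnAdd} that Lemma \ref{lem:leafAdd} relies on.
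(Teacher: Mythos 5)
Your proof is correct and follows essentially the same route as the paper: induction on the number of vertices, with Lemma \ref{lem:vAdd} handling isolated vertices and Lemma \ref{lem:leafAdd} handling leaves. The only (immaterial) difference is organizational --- the paper fixes the number of components, disposes of all isolated vertices up front by building $\overline{K_m}$ from the base case $\ig{\overline{K_2}} = K_1$, and then only invokes Lemma \ref{lem:leafAdd} in the inductive step, whereas you fold the isolated-vertex case into the induction itself.
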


\begin{proof}
	We show by induction on the number of  vertices that if $F$ is a forest with $m$ components, then $F$ is $i$-graph realizable.  For a base, note that $\ig{\overline{K_2}} = K_1$.  Construct the graph  $\overline{K_m}$ by repeatedly applying Lemma \ref{lem:vAdd}.  Suppose  that all forests on $m$ components on at most $n$ vertices are $i$-graph realizable.  Let $F$ be some forest with  $|V(F)| = n+1$ and components $T_1,T_2,\dots, T_m$.   If all  vertices of $F$ are isolated, we are done, so assume there is some leaf $v$ with stem $w$ in component $T_1$.  Let $F^* =  F-\{v\}$.  By induction there exists some graph $G^*$ with $\ig{G^*} \cong F^*$.  Applying Lemma \ref{lem:leafAdd} to $G^*$ at $w$ constructs a graph $G$ with $\ig{G} \cong F$.
\end{proof}

\medskip

\noindent Moreover, by adding Proposition \ref{prop:i:cycle} to the previous results, we obtain the following immediate corollary.  

\begin{coro} \label{coro:unicyclic}
	Unicyclic graphs are $i$-graph realizable.
\end{coro}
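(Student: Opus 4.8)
The plan is to obtain Corollary~\ref{coro:unicyclic} as an easy consequence of the machinery already assembled for forests, exactly in the spirit of the remark preceding the statement. A unicyclic graph $F$ consists of a single cycle $C$ together with a forest of ``pendant trees'' hanging off the vertices of $C$; equivalently, $F$ is obtained from a cycle $C_\ell$ by repeatedly attaching leaves (and, possibly, adding isolated components, though for a connected unicyclic graph none are needed). So the natural approach is induction on $|V(F)|$, with the cycle itself as the base case supplied by Proposition~\ref{prop:i:cycle}, and the inductive step handled by Lemma~\ref{lem:leafAdd} (for leaves) and, if we want to allow disconnected unicyclic graphs, Lemma~\ref{lem:vAdd} (for adding forest components that are isolated vertices) together with Theorem~\ref{thm:i:forests} to realize any acyclic components.

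First I would set up the induction. For the base case, if $F$ is itself a cycle $C_\ell$ with $\ell \geq 3$, then Proposition~\ref{prop:i:cycle} gives a seed graph $G$ with $\ig{G}\cong C_\ell \cong F$; if $F$ has $\ell$ vertices and no edges beyond the cycle we are done. For the inductive step, assume every unicyclic graph on at most $n$ vertices is $i$-graph realizable, and let $F$ be unicyclic on $n+1$ vertices. Since $F$ contains exactly one cycle, and $n+1$ exceeds the length of that cycle, $F$ has a leaf $v$ whose removal leaves a unicyclic graph $F^\ast = F - \{v\}$ (a leaf of $F$ never lies on the cycle, so deleting it cannot destroy the cycle, and it cannot disconnect $F$). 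By induction there is a graph $G^\ast$ with $\ig{G^\ast}\cong F^\ast$; applying Lemma~\ref{lem:leafAdd} to $G^\ast$ at the stem of $v$ produces $G$ with $\ig{G}\cong F$. If one wishes to include disconnected unicyclic graphs (one cycle, plus some trees), one additionally invokes Lemma~\ref{lem:vAdd} to introduce each extra component as an isolated vertex and then Lemma~\ref{lem:leafAdd} to grow it into the desired tree, precisely as in the proof of Theorem~\ref{thm:i:forests}.

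There is essentially no obstacle here: the corollary is genuinely immediate once Proposition~\ref{prop:i:cycle}, Lemma~\ref{lem:leafAdd}, and Lemma~\ref{lem:vAdd} are in hand, since together they say that $i$-graphs are closed under adding pendant vertices and isolated vertices, and every unicyclic graph is built from a single cycle by those two operations. The only point requiring a word of care is the bookkeeping in the base case: one must note that the cycle length $\ell$ can be any integer at least $3$, which is exactly the range covered by Proposition~\ref{prop:i:cycle}, so the induction always bottoms out at a realizable graph. The mild subtlety of recognizing that a leaf of a unicyclic graph is never a cycle vertex (hence its deletion preserves unicyclicity) is the same observation used implicitly in Theorem~\ref{thm:i:forests}, and deserves only a passing sentence.

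In short, the proof is: induct on $|V(F)|$; base case $F$ a cycle, handled by Proposition~\ref{prop:i:cycle}; inductive step, delete a leaf (not on the cycle), realize the smaller unicyclic graph by induction, and re-attach the leaf via Lemma~\ref{lem:leafAdd}, using Lemma~\ref{lem:vAdd} as needed for any additional isolated components. The whole argument mirrors Theorem~\ref{thm:i:forests} with the cycle playing the role that $\overline{K_2}$ played there, which is why it can fairly be called an ``immediate corollary.''
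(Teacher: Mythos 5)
Your proof is correct and is exactly the argument the paper intends: the corollary is stated without an explicit proof, preceded only by the remark that it follows by "adding Proposition \ref{prop:i:cycle} to the previous results," i.e., base the induction of Theorem \ref{thm:i:forests} at a cycle instead of at $\overline{K_2}$ and grow the pendant trees with Lemma \ref{lem:leafAdd} (and Lemma \ref{lem:vAdd} for extra components). Your write-up simply makes that implicit induction explicit, including the correct observation that a leaf of a unicyclic graph never lies on its cycle.
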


With the completion of the constructions of forests and unicyclic graphs as $i$-graphs, we have now determined the $i$-graph realizability of many collections of small graphs.  In particular, we draw the reader's attention to the  following observation.  

\begin{obs} \label{obs:i:4v}  
	Every graph on at most four vertices except $\Dia$ is an $i$-graph.
\end{obs}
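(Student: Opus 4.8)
The plan is a short finite case check, since the positive results already established cover all the relevant classes. First I would dispose of the graphs on at most three vertices: every such graph is a forest except for $K_3$, which is simultaneously a complete graph and a cycle, so these are all $i$-graphs by Theorem~\ref{thm:i:forests} together with Proposition~\ref{prop:comp}. Next I would enumerate the eleven graphs on exactly four vertices, organized by edge count: $\overline{K_4}$ ($0$ edges); $K_2\cup 2K_1$ ($1$ edge); $2K_2$ and $P_3\cup K_1$ ($2$ edges); $P_4$, $K_{1,3}$, and $K_3\cup K_1$ ($3$ edges); $C_4$ and the paw, i.e.\ a triangle with one pendant edge ($4$ edges); $\Dia=K_4-e$ ($5$ edges); and $K_4$ ($6$ edges).

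Then I would match each of the ten graphs other than $\Dia$ to a known result. The six graphs $\overline{K_4}$, $K_2\cup 2K_1$, $2K_2$, $P_3\cup K_1$, $P_4$, and $K_{1,3}$ are forests, hence $i$-graphs by Theorem~\ref{thm:i:forests}. The paw is unicyclic, so it is an $i$-graph by Corollary~\ref{coro:unicyclic}; the graph $K_3\cup K_1$ is $C_3$ with an isolated vertex added, and so it is an $i$-graph either by Corollary~\ref{coro:unicyclic} or, directly, by applying Lemma~\ref{lem:vAdd} to a seed graph of $\ig{C_3}\cong C_3$ from Proposition~\ref{prop:i:cycle}. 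The cycle $C_4$ is an $i$-graph by Proposition~\ref{prop:i:cycle} (equivalently, $C_4\cong Q_2$ via Proposition~\ref{prop:hCube}, or $C_4\cong\ig{2K_2}$), and $K_4$ is an $i$-graph by Proposition~\ref{prop:comp}. Finally, $\Dia$ is excluded precisely because it is not $i$-graph realizable, by Proposition~\ref{prop:i:diamond}.

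The content here is entirely bookkeeping: there is no deep step, and the only place warranting care is confirming that the list of eleven four-vertex graphs is exhaustive and that no graph on four vertices other than $\Dia$ fails to land in one of the classes (forests, cycles, unicyclic graphs, complete graphs) whose realizability has already been settled.
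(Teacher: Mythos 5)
Your proof is correct and follows exactly the route the paper intends: the observation is stated without proof precisely because it is the finite case check you carry out, matching each of the eleven four-vertex graphs (and the smaller ones) to Theorem~\ref{thm:i:forests}, Proposition~\ref{prop:comp}, Proposition~\ref{prop:i:cycle}, Corollary~\ref{coro:unicyclic}, or Lemma~\ref{lem:vAdd}, with $\Dia$ excluded by Proposition~\ref{prop:i:diamond}. Your enumeration of the four-vertex graphs is exhaustive and each assignment is justified, so nothing is missing.
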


\section{Building $i$-Graphs} \label{sec:i:tools}

In this section, we examine how new $i$-graphs can be constructed from known ones.  We begin by presenting three very useful tools for constructing new $i$-graphs: the \textbf{Max Clique Replacement Lemma}, the \textbf{Deletion Lemma}, and the \textbf{Inflation Lemma}.  The first among these shows that maximal cliques in $i$-graphs can  be replaced by arbitrarily larger maximal cliques.

\begin{lemma}[Max Clique Replacement Lemma]  \label{lem:i:cliqueRep}
	Let $H$ be an $i$-graph with a maximal $m$-vertex clique, $\mathcal{K}_m$.  Then, the graph $H_w$ formed by adding a new vertex $w^*$ adjacent to all of $\mathcal{K}_m$ is also an $i$-graph.	
\end{lemma}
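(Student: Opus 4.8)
## Proof Proposal

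The plan is to take a seed graph $G$ with $\ig{G} \cong H$ (which we may assume, by Lemma \ref{lem:epnAdd}, has the property that every vertex of every $i$-set has a nonempty external private neighbourhood) and surgically modify it so that the maximal clique $\mathcal{K}_m$ acquires one extra vertex in the $i$-graph. Let the $i$-sets of $G$ corresponding to the vertices of $\mathcal{K}_m$ be $S_1, S_2, \dots, S_m$. Since $\mathcal{K}_m$ is a clique in $H$, Lemma \ref{lem:i:claw} forces these sets to share a common ``frame'': there is a set $F = \{v_3, \dots, v_k\}$ of size $i(G)-2$ and vertices $a_1, \dots, a_m, b$ (or some similar configuration) such that each $S_t$ differs from each other $S_s$ by a single token slide, all through the same pivot. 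More precisely, the structure of a clique under the claw condition (cf. the reasoning in Proposition \ref{prop:i:C4struct} and Lemma \ref{lem:i:K1m}) should pin down that, up to relabelling, $S_t = \{p, q_t\} \cup F$ for a fixed vertex $p$ and distinct vertices $q_1, \dots, q_m$ that are pairwise adjacent in $G$ and each adjacent to $p$ — i.e. the $S_t$ are obtained by sliding one token around inside a clique $\{q_1, \dots, q_m\}$ of $G$ while a fixed token sits at $p$. I would first prove this structural claim carefully, as it is the backbone of the construction.

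Given that structure, the construction of $G_w$ is natural: introduce a new vertex $q_{m+1}$, make it adjacent to $p$ and to all of $q_1, \dots, q_m$ (so that $\{q_1, \dots, q_{m+1}\}$ becomes an $(m+1)$-clique in $G_w$), and make $q_{m+1}$ adjacent to whatever is needed so that $S_{m+1} = \{p, q_{m+1}\} \cup F$ is a new $i$-set and no spurious new $i$-sets are created. To control the latter, I would attach to $q_{m+1}$ a private structure — for instance, copy the adjacencies of some $q_t$ into the rest of $G$ but then ``pin'' $q_{m+1}$ using a pendant-style gadget (as in Lemma \ref{lem:leafAdd}) or a twin-like attachment (as in Lemma \ref{lem:epnAdd}) so that $q_{m+1}$ behaves in $i$-sets exactly like the other $q_t$'s and cannot combine with anything else. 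The key verifications are then: (i) $i(G_w) = i(G)$; (ii) the $i$-sets of $G_w$ are exactly those of $G$ together with $S_{m+1}$; (iii) $S_{m+1}$ is adjacent in $\ig{G_w}$ to precisely $S_1, \dots, S_m$ (giving $w^*$ adjacent to all of $\mathcal{K}_m$ and to nothing else), and (iv) no previously-existing adjacencies of $H$ are destroyed and no new ones among old vertices are created — this last point is where the maximality of $\mathcal{K}_m$ is essential, since it guarantees no old $i$-set was ``one slide away'' from the clique in a way that the new vertex could interfere with.

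The main obstacle I expect is item (ii) combined with (iv): ensuring that adding $q_{m+1}$ and its attachment gadget does not create unwanted $i$-sets and does not perturb the domination balance elsewhere in $G$. The danger is that $q_{m+1}$ (or the gadget vertices) could participate in some independent dominating set of size $i(G)$ that does not have the form $\{p, q_{m+1}\} \cup F$, or that making $q_{m+1}$ adjacent to the old $q_t$'s changes the private-neighbour structure of the $q_t$'s enough to add or remove an edge in $\ig{G_w}$ among the old vertices. I would handle this by choosing the gadget so that $q_{m+1}$ has its own external private neighbour forcing it into ``clique-slide'' behaviour only, mirroring the $\mathrm{epn}$ guarantee from Lemma \ref{lem:epnAdd}, and by invoking the maximality of $\mathcal{K}_m$ to rule out any old $i$-set adjacent to all of $S_1, \dots, S_m$. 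Once the set of $i$-sets of $G_w$ is nailed down, computing the adjacencies is a routine application of Lemma \ref{lem:i:claw} and Observation \ref{obs:i:edge}, and the isomorphism $\ig{G_w} \cong H_w$ follows by sending $S_{m+1} \mapsto w^*$ and fixing everything else.
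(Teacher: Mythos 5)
Your overall strategy is the paper's: use Lemma \ref{lem:i:claw} to pin down the structure of the $i$-sets corresponding to $\mathcal{K}_m$, add one new vertex to $G$ that creates exactly one new $i$-set forming an $(m+1)$-clique with the old ones, and invoke maximality of $\mathcal{K}_m$ to rule out extra adjacencies. But there are two genuine problems. First, your structural claim is internally inconsistent: you assert that the varying vertices $q_1,\dots,q_m$ are each adjacent to the fixed vertex $p$ of the common core, and you make the new vertex $q_{m+1}$ adjacent to $p$ as well. Since $p$ and $q_t$ both lie in the independent set $S_t$, they cannot be adjacent, and making $q_{m+1}\sim p$ means your proposed new $i$-set $S_{m+1}=\{p,q_{m+1}\}\cup F$ is not independent, so the construction as written cannot produce the desired new vertex of the $i$-graph. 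The correct structure (which the paper extracts from Lemma \ref{lem:i:claw}) is that the $m$ sets share a common core $Z$ of size $i(G)-1$, the varying vertices $v_1,\dots,v_m$ form a clique $Q_m$ in $G$, and the new vertex must be adjacent to $Q_m$ but to \emph{nothing} in $Z$.

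Second, and more importantly, the decisive idea is missing: the paper joins the new vertex $w$ to \emph{every} vertex of $V(G)\setminus Z$, not merely to the clique plus an unspecified gadget. That single choice forces any $i$-set containing $w$ to be exactly $\{w\}\cup Z$ (so precisely one new $i$-set appears and all old $i$-sets and their adjacencies are untouched), and reduces the whole adjacency verification to showing that $\{u\}\cup Z$ is not an $i$-set for any $u\notin Q_m$ --- which is exactly where maximality of $\mathcal{K}_m$ is used (such a $u$ would have to dominate everything $Q_m$ dominates, and would then extend the clique in $H$). Your alternative --- ``make $q_{m+1}$ adjacent to whatever is needed'' plus a twin- or pendant-style gadget --- is left unexecuted, and the natural instantiation fails: if $q_{m+1}$ copies the adjacencies of some $q_t$, then every $i$-set through $q_t$ (not just $S_t$) spawns a new $i$-set through $q_{m+1}$, creating many unwanted vertices and edges in the $i$-graph, and it is not clear how a pendant gadget would suppress these without perturbing $i(G)$ or the old $i$-sets. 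You correctly list the verification obligations (i)--(iv), but none of them is discharged, and the construction that would let them all go through is not the one you propose.
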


\begin{proof}	
	Suppose $G$ is a graph such that $\ig{G}=H$ and $i(G)=k+1$ where $k\geq 1$, and let $\mathcal{K}_m=\{V_1, V_2, \dots, V_m\}$ be a maximal clique in $H$.  From Lemma \ref{lem:i:claw}, the corresponding $i$-sets $V_1, V_2, \dots, V_m$ of $G$ differ on exactly one vertex, so for each $1\leq i \leq m$, let $V_i = \{v_i, z_1,z_2,\dots,z_{k}\} \subseteq V(G)$, so that $Z=\{z_1,z_2,\dots,z_{k}\} = \bigcap_{1 \leq i \leq m} V_i$.  Notice also from Lemma \ref{lem:i:claw}, for each $1 \leq i  < j \leq m$, $v_iv_j \in E(G)$, and so $Q_m=\{v_1,v_2,\dots,v_m\}$ is a (not necessarily maximal) clique of size $m$ in $G$. 
	
	In addition to ${Q_m}$ and $Z$ defined above, we further weakly partition (i.e. some of the sets of the partition may be empty) the vertices of $G$ as
	\begin{enumerate}[itemsep=0pt, label=]
		\item $X= N(Q_m) \backslash N(Z)$, the vertices dominated by $Q_m$ but not $Z$.
		\item $Y = N(Q_m) \cap N(Z)$, the vertices dominated by both $Q_m$ and $Z$.
		\item $A= N(Z) \backslash N(Q_m)$, the vertices dominated by $Z$ but not $Q_m$.
	\end{enumerate}

	\noindent This partition (as well as the construction of $G_w$ defined below) is illustrated in Figure \ref{fig:i:cliqueRep}.  Before proceeding with the construction, we state the following series of claims regarding the set $X$:
	\begin{enumerate}[itemsep=5pt, label=]
		\item \textbf{Claim 1:} \emph{Each  $x \in X$ is dominated by every vertex of $Q_m$.  }\\
		Otherwise, if some $x \in X$ is not adjacent to some $v_j \in Q_m$, then $x$ is undominated in the the $i$-set $V_j = \{v_j\} \cup Z$.

		\item \textbf{Claim 2:} \emph{$|X| \neq 1$}.\\ 
		If $X=1$, say $X = \{x\}$, then $X^*= \{x\}\cup Z$ is independent, dominating, and has $|X^*| = i(G)$; that is, $X^*$ is an $i$-set of $G$.  However, since $x$ is adjacent to all of $Q_m$ in $G$, $\edge{X^*,x,v_j,V_j}$ for each $1 \leq j \leq m$, contradicting the maximality of the clique $\mathcal{K}_m$ in $H$.
		
		\item \textbf{Claim 3:} \emph{No $x \in X$ dominates all of $X$}. \\ 
		If $x \in X$ dominates $X$, then $\{x\} \cup Z$ is an $i$-set of $G$. Following a similar argument  of  Claim 2, this contradicts the maximality of $\mathcal{K}_m$ in $H$.

		\item \textbf{Claim 4:} \emph{For any $v \in (X \cup Y  \cup A)$, $\{v\} \cup Z$ is not an $i$-set}.\\
		Combining Claims 2 and 3, if $v \in X$, then there exists some $x_i\in X$ such that $v\not\sim x_i$, and thus $\{v\} \cup Z$ does not dominate $x_i$.
		If $v \in (Y\cup A)$, then $v\in N(Z)$,  and so $\{v\} \cup Z$ is not independent.  		
		
	\end{enumerate}	
	
	We construct a new graph $G_w$ from $G$ by joining a new vertex $w$ to each vertex in $V(G)-Z$, as in Figure \ref{fig:i:cliqueRep}.  We claim that $\ig{G_w} \cong H_w$.  

	\begin{figure}[H]
		\centering
		\begin{tikzpicture}	
			
			\node(cent) at (0,0) {};
			\path (cent) ++(0:30 mm) node (Zcent)  {};
			\path (cent) ++(180:30 mm) node (Kcent)  {};
			
			\path (cent) ++(90:40 mm) node (Wcent)  {};
			
			\path (cent) ++(90: 0mm) node (Ycent)  {};		
			\path (cent) ++(180:60 mm) node (Xcent)  {};
			\path (cent) ++(0:70 mm) node (Vcent)  {};		
			
			\foreach \label/\rad in {1/6,2/2,3/-2,4/-6}
			\path (Kcent) ++(90:\rad mm) node[std] (k\label)  {};	
			
			\draw(k1)--(k4);
			\draw[black] (k1) to [out=225,in=135](k3);	
			\draw[black] (k2) to [out=225,in=135](k4);
			\draw[black] (k1) to [out=215,in=145](k4);

			\foreach \label/\rad in {1/6,2/0,3/-6}
			\path (Xcent) ++(90:\rad mm) node[std] (x\label)  {};
			
			\foreach \label/\rad in {1/6,2/-6}
			\path (Ycent) ++(90:\rad mm) node[std] (y\label)  {};
			
			\foreach \label/\rad in {1/0,2/-4,3/-8}
			\path (Zcent) ++(90:\rad mm) node[std] (z\label)  {};	
			
			\foreach \label/\rad in {1/6,2/2,3/-2, 4/-6}
			\path (Vcent) ++(90:\rad mm) node (v\label)[std]  {};	
			
			\path (cent) ++(120:30 mm) node (w)[regRed]  {};	
			\path (w) ++(90: 6mm) node (WL)  {$w$};	
			
			\draw[ll](x1)--(x2);
			\draw[ll](v1)--(v2)--(v3);	
			
			\begin{pgfonlayer}{blob}			
				\draw[draw=gray,  fill=lgray] \convexpath{k1,k4}{5mm};
				\path (k4) ++(-90: 15mm) node (KL)  {$Q_m$};
				
				\draw[draw=gray,  fill=lgray] \convexpath{x1,x3}{5mm};
				\path (x3) ++(-90: 15mm) node (XL)  {$X$};	
				
				\draw[draw=gray, thick, fill=lgray] \convexpath{z1,z3}{5mm};
				\path (z3) ++(-90: 13mm) node (ZL)  {$Z$};	
				
				\draw[draw=gray, thick, fill=lgray] \convexpath{y1,y2}{5mm};
				\path (y2) ++(-90: 15mm) node (YL)  {$Y$};	
				
				\draw[draw=gray, thick, fill=lgray] \convexpath{v1,v4}{5mm};
				\path (v4) ++(-90: 15mm) node (VL)  {$A$};				
				
			\end{pgfonlayer}
			
			
			\foreach \i in {1,2,3}
			{
				\foreach \j in {1,2,3,4}
				\draw[ll](x\i)--(k\j);
			}		
			
			\draw[ll](y1)--(k2);
			\draw[ll](y2)--(k4);
			\draw[ll](y1)--(k1);

			\draw[ll](z1)--(y1)--(z2);
			\draw[ll](y2)--(z2);

			\draw[ll](z1)--(v1);
			\draw[ll](z2)--(v2);
			\draw[ll](z2)--(v3);
			\draw[ll](z2)--(v4);
			\draw[ll](z3)--(v4);
			
			\draw[ll] (x3) to [out=-30,in=210](y2);	
			
			\draw[ll] (y2) to [out=-30,in=210](v4);	
			\draw[ll] (y2) to [out=-30,in=210](v3);

			

				\foreach \i in {1,2,3}
				\draw[red] (w)--(x\i);
				
				\foreach \i in {1,2,3,4}		
				{	\draw[red] (w)--(k\i);	
					\draw[red] (w)--(v\i);
				}
				
				\draw[red] (y1)--(w)--(y2);
			
		\end{tikzpicture}
		\caption{Construction of $G_w$ from $G$ in Lemma  \ref{lem:i:cliqueRep}.}
		\label{fig:i:cliqueRep}
	\end{figure}
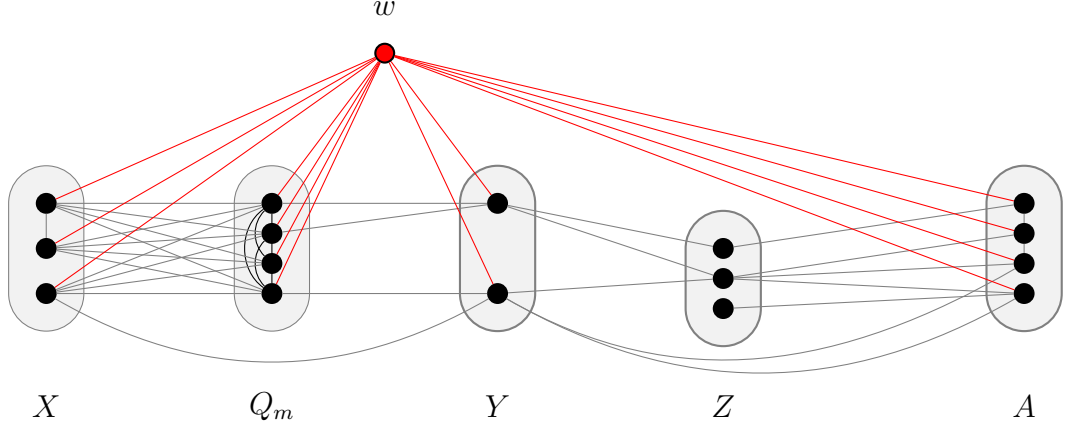	
	
	Let $S$ be some $i$-set of $G_w$.  If $w \notin S$, then $S \subseteq V(G)$ and so $S$ is also independent dominating in $G$, implying $|S| = i(G)= k+1$.  However, if $w\in S$, then since $w$ is adjacent to all of $V(G)-Z$ and $Z$ is independent, we have that $S = \{w\} \cup Z$.  It follows that $i(G_w) = i(G)$.  Moreover, any $i$-set of $G$ is also an $i$-set of $G_w$, and so $W:=\{w\} \cup Z$ is the only new $i$-set generated in $G_w$.  Thus $V(\ig{G_w}) = V(\ig{G}) \cup \{W\}$.
	
	Consider now the edges of $\ig{G_w}$.  Since $w$ is adjacent to all of $Q_m$ in $G_w$,  $\edge{W,w,v_j,V_j}$ for each $1 \leq j \leq m$, and thus $\mathcal{K}_m \cup W$ is a clique in $\ig{G_w}$. 
	
	Finally, we demonstrate that $W$ is adjacent only to the $i$-sets of $\mathcal{K}_m$.  Consider some $i$-set $S \notin \mathcal{K}_m$, and suppose to the contrary that $\onlyedge{W,S}$.  	
	As $W$ is the only $i$-set containing $w$, we have that $w\notin S$, and hence $\edge{W,w,u,S}$ for some vertex $u$.
	Since $w \sim u$,   $u \notin Z$.  Moreover, since $W$ and $S$ differ at exactly one vertex and $Z \subseteq W$, it  follows that $Z \subseteq S$; that is, $S = \{u\} \cup Z$.  If $ u\in Q_m$ then $S \in \mathcal{K}_m$, a contradiction.  If $u \in (X \cup Y \cup A)$, then by Claim 4, $S$ is not an $i$-set, which is again a contraction.  We conclude that $W \not\sim S$ for any $i$-set $S \notin \mathcal{K}_m$, and therefore $E(\ig{G_w}) = E(\ig{G}) \cup \left( \bigcup_{v_i\in Q_m} wv_i\right)$.  If follows that $\ig{G_w} \cong H_w$.		
\end{proof}

\medskip

Our next result, the Deletion Lemma, shows that the class of $i$-graphs is closed under vertex deletion. It is unique among our other constructions; unlike most of our results  which demonstrate how to build larger $i$-graphs from smaller ones, the Deletion Lemma instead shows that every induced subgraph of an $i$-graph is also an $i$-graph.

\begin{lemma}[The Deletion Lemma] \label{lem:i:inducedI} 
	If $H$ is a nontrivial $i$-graph, then any induced subgraph of $H$ is also an $i$-graph.
\end{lemma}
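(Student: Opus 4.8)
The plan is to reduce the general case to the case of deleting a single vertex, and then to build an explicit seed graph for $H - v$ out of a seed graph for $H$. Since any induced subgraph of $H$ is obtained by a finite sequence of single-vertex deletions, it suffices to prove: if $H = \ig{G}$ and $v \in V(H)$, then $H - v$ is an $i$-graph. By Lemma \ref{lem:epnAdd} I may assume $G$ is chosen so that every vertex of every $i$-set of $G$ has a nonempty external private neighbourhood; this will be important to control which subsets of $V(G)$ are $i$-sets after I modify the graph.

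Let $S_v$ denote the $i$-set of $G$ corresponding to the vertex $v \in V(H)$ that I want to delete. The goal is to construct $G'$ from $G$ so that the $i$-sets of $G'$ are exactly the $i$-sets of $G$ other than $S_v$, with the same slide-adjacencies among them; then $\ig{G'} \cong H - v$. The natural idea is to attach a gadget that ``kills'' $S_v$ without introducing new $i$-sets or new adjacencies. One approach: pick a vertex $u \in S_v$ together with some $u^* \in \epn(u, S_v)$, and attach new vertices to $G$ in such a way that every $i$-set avoiding $u$ stays an $i$-set, every $i$-set containing $u$ but different from $S_v$ stays an $i$-set, but $S_v$ itself fails to remain minimum (or fails to remain dominating) because the gadget forces one extra token. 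The subtlety is that $S_v$ may not be the unique $i$-set containing $u$, so I cannot simply destroy all such sets; the gadget must distinguish $S_v$ from its ``siblings'' $S$ with $u \in S$, $S \neq S_v$ — and these differ from $S_v$ on at least one vertex of $S_v - \{u\}$. So a cleaner route is to attach the gadget not at a single vertex but in a way keyed to the whole set $S_v$: add a gadget $F$ whose unique ``cheap'' domination requires using exactly the configuration $S_v$, and arrange that $F$ nonetheless always costs one more token than it saves, so that sets using $S_v$-domination are no longer minimum while all other $i$-sets of $G$ survive and no new ones are created. This is structurally parallel to the $\{x_i\}, y, z$ gadget in the proof of Lemma \ref{lem:leafAdd} and to the $\{a,b\}$ and $w$ gadgets in Lemmas \ref{lem:epnAdd} and \ref{lem:i:cliqueRep}, so I expect the construction to reuse that machinery: attach a pendant-path/clique-fan structure hanging off the vertices of $S_v$, chosen so that it is dominated at no extra cost precisely when $S_v \subseteq S$, and otherwise forces an extra vertex.

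The key steps, in order: (1) reduce to single-vertex deletion and invoke Lemma \ref{lem:epnAdd} to normalize $G$; (2) fix $S_v$ and design the gadget $F$, attaching it to $G$ to form $G'$, and verify $i(G') = i(G)$ by checking that $W \cup (\text{one gadget vertex})$-type sets are never smaller than $i(G)$ and that every $i(G)$-set not equal to $S_v$ extends with no extra cost; (3) prove the $i$-sets of $G'$ are exactly $\{\,S : S \text{ an } i\text{-set of } G,\ S \neq S_v\,\}$ — the forward direction uses the epn assumption (an $i$-set using gadget vertices leaves some $\epn_G(w,W)$ undominated, exactly as in Lemma \ref{lem:leafAdd}), and the reverse direction checks domination of the gadget; (4) check adjacencies: since $G$ sits inside $G'$ as an induced subgraph on the relevant vertex set and no token can ever slide between a gadget vertex and a $G$-vertex in a way producing an $i$-set, $\ig{G'}$ has exactly the edges of $\ig{G}$ restricted to the surviving vertices, i.e. $\ig{G'} \cong H - v$.

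The main obstacle is step (2)–(3): designing a gadget that singles out $S_v$ among possibly many $i$-sets sharing vertices with it, without accidentally destroying those siblings or creating a brand-new $i$-set (for instance one that uses some gadget vertices together with $Z := S_v \cap S$ for a sibling $S$). Getting the gadget to ``cost exactly one extra token, and only for $S_v$'s domination pattern'' is the delicate balancing act; the epn-normalization from Lemma \ref{lem:epnAdd} is the tool that prevents gadget-heavy sets from sneaking in under the cardinality bound, and the degree/adjacency bookkeeping at the end is routine once the vertex set of $\ig{G')$ is pinned down.
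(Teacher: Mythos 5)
There is a genuine gap: your proof never actually produces the gadget on which everything depends. Steps (2)--(4) are a wish list — you describe what the gadget $F$ must accomplish (``kill'' $S_v$ without harming its siblings or creating new $i$-sets), correctly identify that this is the delicate point, and then leave it unresolved. Worse, your two descriptions of $F$ contradict each other: you first say sets using ``$S_v$-domination'' should no longer be minimum, but later ask for a gadget ``dominated at no extra cost precisely when $S_v \subseteq S$,'' which would preserve $S_v$ and penalize everything else. A plan whose central construction is both missing and ambiguous in sign is not a proof.

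The observation you are missing is that the gadget need not be keyed to the whole set $S_v$ at all, and need not hang off the vertices of $S_v$. Since every $i$-set $S$ of $G$ with $S \neq S_v$ satisfies $|S| = |S_v|$ and hence contains at least one vertex of $V(G) - S_v$, it suffices to add a \emph{single} new vertex $z$ joined to every vertex of $V(G) - S_v$: then $S_v$ fails to dominate $z$ and so dies, while every other $i$-set of $G$ automatically dominates $z$ and survives. One then checks that $z$ itself lies in no $i$-set of the new graph (if $z \in S_X$, independence forces $S_X \cap (V(G)-S_v) = \varnothing$, and since every vertex of $S_v$ has all its neighbours in $V(G)-S_v$, each must dominate itself, giving $|S_X| \geq i(G)+1$), and that adjacencies among the surviving $i$-sets are unchanged because $G$ sits inside the new graph as an induced subgraph and all surviving $i$-sets avoid $z$. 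This is the entire argument; it needs neither Lemma \ref{lem:epnAdd} nor any machinery from Lemma \ref{lem:leafAdd}, and the ``sibling'' problem you worry about (other $i$-sets containing a chosen $u \in S_v$) dissolves because the construction distinguishes $S_v$ from its siblings by the one property that is free: being a proper subset of $S_v \cup \{z\}$'s non-neighbourhood. Your reduction to single-vertex deletion and your final bookkeeping step (4) are fine, but without the construction the proof does not exist.
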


\begin{proof}  
	Let $G$ be a graph such that $H= \ig{G}$ and $i(G)=k$.   To prove this result, we show that for any $X \in V(H)$, there exists some graph $G_X$ such that $\ig{G_X} = H-X$.
	
	To construct $G_X$, take a copy of $G$ and add to it a vertex $z$ so that $z$ is adjacent to each vertex of $G-X$ (see Figure \ref{fig:i:Gx}).  	
	Observe first that since $H$ is nontrivial, there exists an $i$-set $S \neq X$ of $G$.  Then, $S$ is also an independent dominating set of $G_X$, and so $i(G_X) \leq k$.    		
	Consider now some $i$-set $S_X$ of $G_X$.   Clearly $S_X \neq X$ because $X$ does not dominate $z$.  	 
	If $z\in S_X$, then as $S_X$ is independent, no vertex of $G-X$ is in $S_X$.  
	Moreover, since $X$ is also independent and its vertices have all of their neighbors in $G-X$, this leaves each vertex of $X$ to dominate itself.  That is, $X \subseteq S_X$, implying that $S_X = X \cup \{z\}$ and  $|S_X| = k + 1$.  This contradicts that $i(G_X) \leq k$, and thus we conclude that $z$ is not in any $i$-set of $G_X$.  	  
	It follows that each $i$-set of $G_X$ is composed only of vertices from $G$ and so $i(G_X) = k$.  Thus, $S_X \neq X$ is an $i$-set of $G_X$ if and only if it is an $i$-set of $G$.   	 Given that $V(\ig{G_X}) = V(\ig{G}) - \{X\} = V(H) - \{X\}$, we have that $\ig{G_X} = H - X$ as required.  \null \hfill
\end{proof}


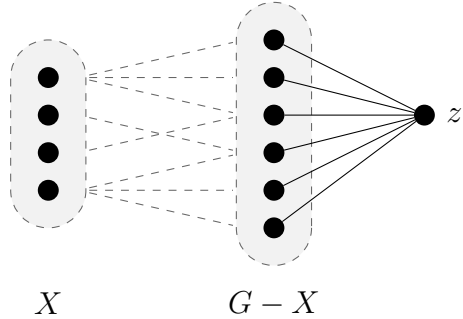
\begin{figure}[H]
	\centering
	\begin{tikzpicture}					
		
		\node(cent) at (0,0) {};
		
		\foreach \label/\rad in {1/0,2/5,3/10,4/15}
		\path (cent) ++(90:\rad mm) node (x\label)  {};		
		
		\path (cent) ++(0:30 mm) node (yM)  {};					
		
		\foreach \label/\rad in {1/-5,2/0,3/5,4/10,5/15,6/20}
		\path (yM) ++(90:\rad mm) node (y\label) [std]  {};		
		
		
		\foreach \i in {1,2,3,4}
		\path (x\i) ++(0:4 mm) node (h\i)  {};
		\foreach \i in {1,2,3,4,5,6}	
		\path (y\i) ++(0:-4 mm) node (j\i)  {};	
		
		\draw[dashed, mgray] (h1)--(j2);
		\draw[dashed, mgray] (h2)--(j4);
		\draw[dashed, mgray] (h4)--(j5);
		\draw[dashed, mgray] (h4)--(j4);
		\draw[dashed, mgray] (h3)--(j3);
		\draw[dashed, mgray] (h1)--(j3);	
		\draw[dashed, mgray] (h1)--(j1);	
		\draw[dashed, mgray] (h4)--(j6);	
		
		\draw[draw=mgray, fill=lgray, dashed] \convexpath{x1,x4}{5mm};	
		
		\foreach \i in {1,2,3,4}
		\node [std](xD\i) at (x\i) {};	
		
		\path (x1) ++(-90: 15mm) node (XL)  {$X$};		
		
		\draw[draw=mgray,fill=lgray, dashed] \convexpath{y1,y6}{5mm};		
		
		\path (y3) ++(90:5 mm) node (y35)  {};	
		\path (y35) ++(0:20 mm) node (z) [std] {};
		\path (z) ++(0:4 mm) node (zL) {$z$};
		
		\foreach \i in {1,2,3,4,5,6}{
			\node [std](yD\i) at (y\i) {};		
			\draw(z)--(yD\i);
		}
		
		\path (y1) ++(-90: 10mm) node (GL)  {$G-X$};	
		
	\end{tikzpicture}			
	\caption{Construction of $G_X$ in Lemma \ref{lem:i:inducedI}.}
	\label{fig:i:Gx}%
\end{figure}


\noindent The following corollary is immediate as the contrapositive of Lemma \ref{lem:i:inducedI}.  

\begin{coro} \label{coro:i:notInduced}
	If $H$ is not an $i$-graph, then any graph containing an induced copy of $H$ is also not an $i$-graph.
\end{coro}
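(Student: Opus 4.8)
The plan is to obtain the statement directly from the Deletion Lemma (Lemma~\ref{lem:i:inducedI}) by contraposition; no new seed-graph construction is required. First I would restate the goal in contrapositive form: let $H'$ be a graph containing an induced copy of $H$, and suppose for contradiction that $H'$ is an $i$-graph; it then suffices to show that $H$ itself is an $i$-graph, which contradicts the hypothesis that $H$ is not.

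Before applying Lemma~\ref{lem:i:inducedI}, I would check that its nontriviality hypothesis is satisfied. Since $H$ is assumed not to be an $i$-graph, Propositions~\ref{prop:i:diamond}, \ref{prop:i:K23}, and~\ref{prop:i:kappa} together with Observation~\ref{obs:i:4v} force $H$ to have at least four vertices (every graph on at most four vertices other than $\Dia$ is an $i$-graph); consequently $H'$, which contains $H$ as an induced subgraph, also has at least four vertices and is in particular nontrivial. Thus Lemma~\ref{lem:i:inducedI} is applicable to $H'$.

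Applying Lemma~\ref{lem:i:inducedI} to the nontrivial $i$-graph $H'$ yields that \emph{every} induced subgraph of $H'$ is an $i$-graph; in particular the induced copy of $H$ in $H'$ is an $i$-graph, so $H$ is an $i$-graph --- the desired contradiction. The only step needing any care is the verification that $H'$ is nontrivial so that the Deletion Lemma may legitimately be invoked, and even this is routine; the substantive work is entirely contained in Lemma~\ref{lem:i:inducedI} itself, so I expect no real obstacle here.
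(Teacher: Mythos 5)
Your proposal is correct and is exactly the paper's argument: the corollary is obtained as the contrapositive of the Deletion Lemma (Lemma~\ref{lem:i:inducedI}), which the paper states is immediate. Your extra verification that $H'$ is nontrivial (via Observation~\ref{obs:i:4v}) is a harmless and valid bit of added care.
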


\noindent  
This powerful corollary, although simple in statement and proof, immediately removes many families of graphs from $i$-graph realizability.  For example, all wheels, 2-trees, and maximal planar graphs on at least five vertices contain an induced copy of  the Diamond graph $\Dia$, which was shown in Proposition \ref{prop:i:diamond} to not be an $i$-graph.
Moreover, given that $i$-graph realizability is an inherited property, this suggests that there may be a finite-family forbidden subgraph characterization for $i$-graph realizability.

We now alter course to examine how one may construct new $i$-graphs by combining several known $i$-graphs.  Understandably, an immediate obstruction to combining the constructions of $i$-graphs of, say, $\ig{G_1} = H_1$ and $\ig{G_2}=H_2$ is that it is possible (and indeed, likely) that $i(G_1) \neq i(G_2)$.  

Two solutions to this quandary are presented in the following  lemmas. In the first, Lemma \ref{lem:i:infFam},  given a graph $G$, we  progressively construct an infinite family of seed graphs $\mathcal{G}$ with the same number of components as $G$, and such that $\ig{G} = \ig{G_j}$ for each $G_j \in \mathcal{G}$.    The second, Lemma \ref{lem:i:anySize} or the \textbf{Inflation Lemma}, offers a more direct solution: given an $i$-graph $H$, we demonstrate how to ``inflate" a seed graph $G$ to produce a new graph $G^*$ such that $\ig{G^*} = \ig{G}$ and the $i$-sets of $G^*$ are arbitrarily larger than the $i$-sets of $G$.


\begin{lemma} \label{lem:i:infFam}
	If $G$ is a graph with $\ig{G}\cong H$, then there exists an infinite family of graphs $\mathcal{G}$ such that $\ig{G_j}\cong H$ for each $G_j \in \mathcal{G}$. 
	Moreover, the number of components of $G_j \in \mathcal{G}$ is the same as $G$ ($k(G) = k(G_j)$).
	
\end{lemma}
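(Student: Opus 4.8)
The plan is to take a graph $G$ with $\ig{G}\cong H$ and build a single ``inflation'' operation that produces a new graph $G'$ with $\ig{G'}\cong H$, $k(G')=k(G)$, and $i(G')>i(G)$; iterating this operation yields the desired infinite family $\mathcal{G}=\{G=G_0,G_1,G_2,\dots\}$ with strictly increasing independent domination numbers, so the graphs are pairwise non-isomorphic. By Lemma~\ref{lem:epnAdd} I may assume at the outset that every vertex of every $i$-set of $G$ has a nonempty external private neighbourhood; this will make the bookkeeping in the inflation step cleaner.

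The inflation step I would use is the following: pick a component $C$ of $G$ and a ``spot'' that every $i$-set of $G$ must dominate, then attach a small gadget whose only role is to force one extra token into each $i$-set while not creating any new $i$-sets and not changing adjacencies. Concretely, one natural construction: choose a vertex $v_0 \in V(C)$, add three new vertices $p,q,r$ with $p$ adjacent to $v_0$ and to all of $N_G[v_0]$, and $q,r$ adjacent only to $p$ (a ``fan'' forcing $p$ or $\{q,r\}$ into every $i$-set). One then argues (i) $i(G')=i(G)+1$: any $i$-set of $G$ together with one forced vertex on the gadget is independent dominating in $G'$, and conversely any $i$-set of $G'$, restricted to $V(G)$, is an $i$-set of $G$ plus exactly one gadget vertex; (ii) the map sending an $i$-set $S$ of $G$ to $S\cup\{\text{forced vertex}\}$ is a bijection on vertex sets of the $i$-graphs; and (iii) it is an isomorphism of $i$-graphs, because the forced gadget vertex is \emph{frozen} in every $i$-set of $G'$ (it has no external private neighbour once the full closed neighbourhood of $v_0$ is joined to $p$, or its epn is dominated only by itself), so by Observation~\ref{obs:i:edge} no token slide of $G'$ ever moves the gadget token, and a slide of $G'$ restricted to $V(G)$ is exactly a slide of $G$, with $H$ as a subgraph of $G'$ guaranteeing no extra edges appear. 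Crucially, $G'$ has the same number of components as $G$ because the gadget is attached to the existing component $C$, not added as a new component; iterating gives $\mathcal{G}$ with $k(G_j)=k(G)$ for all $j$.

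The main obstacle is verifying step (iii) carefully, specifically that the gadget token is genuinely frozen and that no gadget vertex can appear in an $i$-set \emph{together with} tokens in $V(G)$ in an unexpected pattern (i.e., ruling out ``mixed'' $i$-sets like $\{q\}\cup(\text{something in }V(G)\text{ that no longer needs to dominate }v_0)$). This is exactly the kind of argument already carried out in Lemmas~\ref{lem:epnAdd}, \ref{lem:leafAdd}, and \ref{lem:i:inducedI} via the Claims about private neighbourhoods, so the technique is in hand: the preprocessing from Lemma~\ref{lem:epnAdd} guarantees that if a gadget vertex is swapped in, some original external private neighbour of the displaced vertex becomes undominated, which is the contradiction that kills all mixed configurations. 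A secondary point to handle is the degenerate case $i(G)=1$, where one first replaces $G$ by $G\cup \overline{K_1}$-free... rather, by noting $\ig{G}=\ig{G\cup K_1}$ would change the component count; instead one inflates directly, or first applies the $i(G)=1$ workaround from Lemma~\ref{lem:vAdd} internally to a component so that $k(G)$ is preserved. Finally, the infinite family follows since $i(G_j)=i(G)+j$ is strictly increasing, so the $G_j$ are distinct, while each satisfies $\ig{G_j}\cong H$ and $k(G_j)=k(G)$.
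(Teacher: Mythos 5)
Your overall strategy is the paper's: attach a small gadget to an existing component of $G$ that forces exactly one extra, frozen token into every $i$-set, leaving the $i$-graph and the component count unchanged, and iterate to obtain infinitely many seed graphs with strictly increasing independent domination number. However, the specific gadget you propose does not work. You join $p$ to all of $N_G[v_0]$, but every $i$-set $S$ of $G$ must contain a vertex of $N_G[v_0]$ in order to dominate $v_0$; hence $S\cup\{p\}$ is never independent, and your claim (i) already fails in the forward direction. Worse, the $i$-sets of $G'$ that do contain $p$ are exactly the sets $\{p\}\cup T$ with $T$ a minimum independent set of $G-N_G[v_0]$ dominating $V(G)-N_G[v_0]$, which need bear no relation to the $i$-sets of $G$ and can even be smaller than $i(G)+1$ would require (take $G=K_{1,n}$ with $v_0$ the centre: $\{p\}$ alone dominates all of $G'$, so $i(G')=i(G)=1$ and the unique $i$-set of $G$ has disappeared entirely). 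So the intended bijection $S\mapsto S\cup\{p\}$ does not exist, and no amount of the Lemma \ref{lem:epnAdd} preprocessing repairs this.

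The missing idea is that the forced vertex must be kept at distance at least $2$ from $V(G)$, so that it can be added to \emph{every} $i$-set of $G$ without violating independence. The paper achieves this by attaching a star $K_{1,3}$ with centre $x$ and leaves $y_1,y_2,y_3$, joining $v\in V(G)$ to the leaf $y_1$ only. Then $S\cup\{x\}$ is independent and dominating for every $i$-set $S$ of $G$; the two pendant leaves $y_2,y_3$ force $x$ into every $i$-set of the new graph (omitting $x$ costs two gadget vertices); and the token on $x$ is frozen because no vertex other than $x$ dominates both $y_2$ and $y_3$, so by Observation \ref{obs:i:edge} no new edges arise. With that gadget none of your auxiliary machinery (the preprocessing, the separate treatment of $i(G)=1$) is needed, and the rest of your argument --- iteration, preservation of the component count because the gadget hangs off an existing component, and distinctness of the $G_j$ via strictly increasing $i(G_j)$ --- goes through as you describe.
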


\begin{proof}
	Suppose $v \in V(G)$, and let $G^*$ be the graph  obtained by attaching a copy of the star $K_{1,3}$ with $V(K_{1,3})=\{x,y_1,y_2,y_3\}$ ($\deg(x)=3$) by joining $v$ to $y_1$.   As $y_2$ and $y_3$ are pendant vertices, $i(G^*) \geq i(G)+1$.  If $S$ is an $i$-set of $G$, then $S^*=S \cup \{x\}$ is dominating and independent, and so $i(G^*) =  i(G)+1$.  Thus, $x$ is in every $i$-set of $G^*$, and we can conclude that $S^*$ is an $i$-set of $G^*$ if and only if $S^*-\{x\}$ is an $i$-set of $G$.		It follows that $\ig{G^*} \cong  \ig{G}$ as required.  		
	Attaching additional copies of $K_{1,3}$ as above at any vertex of $H$ similarly creates the other graphs of $\mathcal{G}$.		
\end{proof}

\begin{lemma}[Inflation Lemma] \label{lem:i:anySize} 
	If $H$ is the $i$-graph of some graph $G$, then for any $k \geq i(G)$ there exists a graph $G^*$ such that $i(G^*) =k$ and $\ig{G^*}\cong H$.  		
\end{lemma}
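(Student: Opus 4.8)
The plan is to reduce the statement to a single ``$+1$'' step. Concretely, I would first prove the following auxiliary claim: given any graph $G$, one can construct a graph $G'$ with $\ig{G'} \cong \ig{G}$ and $i(G') = i(G) + 1$. Granting this, the lemma follows by induction on $k$: if $k = i(G)$ take $G^* = G$; otherwise the induction hypothesis furnishes a graph $G_0$ with $\ig{G_0} \cong H$ and $i(G_0) = k-1$, and applying the claim to $G_0$ yields $G^*$ with $i(G^*) = k$ and $\ig{G^*} \cong \ig{G_0} \cong H$. The key point is that each step raises $i$ by \emph{exactly} one, so every integer $k \ge i(G)$ is attained, not merely infinitely many of them.

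For the single step, the construction already carried out in the proof of Lemma \ref{lem:i:infFam} does the job: attach to some vertex $v$ of $G$ a copy of $K_{1,3}$ with centre $x$ and leaves $y_1,y_2,y_3$ by adding the edge $vy_1$. As shown there, $x$ lies in every $i$-set, $i$ increases by exactly $1$, the $i$-sets of the new graph are precisely the sets $S \cup \{x\}$ with $S$ an $i$-set of $G$, and the token on $x$ is frozen (its external private neighbourhood always contains the pendant vertices $y_2,y_3$, and no single vertex dominates both $y_2$ and $y_3$), so no new $i$-graph edges appear; hence $\ig{G'} \cong \ig{G}$. Iterating this $k - i(G)$ times proves the lemma, and as a bonus preserves the number of components, as in Lemma \ref{lem:i:infFam}.

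An even shorter route avoids introducing any new gadget: since $\ig{K_1} \cong K_1$, Proposition \ref{prop:i:cart} gives $\ig{G \cup K_1} \cong \ig{G} \boksie K_1 \cong \ig{G}$, while $i(G \cup K_1) = i(G) + 1$ because $i$ is additive over disjoint unions (as used in the proof of Proposition \ref{prop:i:cart}); one then simply sets $G^* = G \cup (k - i(G))K_1$. I do not expect a genuine obstacle here: all the real work resides in Lemma \ref{lem:i:infFam} and Proposition \ref{prop:i:cart}, and the only thing requiring care is the bookkeeping that each elementary step changes $i$ by exactly $1$ and leaves the $i$-graph unchanged, which is what guarantees we can land on the prescribed value $k$.
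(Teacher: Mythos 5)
Your proposal is correct, and your ``even shorter route'' is in fact precisely the paper's proof: the paper takes $G^* = G \cup (k-i(G))K_1$ directly, observing that each isolated vertex must lie in every maximal independent set (so $i$ goes up by exactly $\ell = k-i(G)$), that the $i$-sets of $G^*$ are exactly the sets $X \cup S$ for $X$ an $i$-set of $G$, and that the tokens on the isolated vertices are frozen, so adjacency is untouched. Routing this through Proposition \ref{prop:i:cart} with $\ig{K_1}\cong K_1$ and $H \boksie K_1 \cong H$ is a clean way to package the same construction. Your first route, iterating the $K_{1,3}$ gadget of Lemma \ref{lem:i:infFam}, is also sound --- the paper has already verified there that each attachment raises $i$ by exactly one and leaves the $i$-graph unchanged, so the induction on $k$ goes through --- and it buys something the isolated-vertex construction does not, namely preservation of the number of components of the seed graph; but for the statement as given this extra machinery is unnecessary.
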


\begin{proof}
	Begin with a copy of $G$ and add to it $\ell = k - i(G)$ isolated vertices, $S=\{v_1,v_2,\dots,v_{\ell}\}$.   Immediately, $X$ is an $i$-set of $G$ if and only if $X\cup S$ is an $i$-set of $G^*$.  Moreover, if $X$ and $Y$ are $i$-sets of $G$ such that $\adjG{X,Y,G}$ in $H$, then $\adjG{( X\cup S), (Y\cup S), G^*}$, and so $\ig{G^*} \cong H$.  
\end{proof}

\medskip

Now, when attempting to combine the constructions of $\ig{G_1} = H_1$ and $\ig{G_2}=H_2$ and $i(G_1) < i(G_2)$, we need only inflate $G_1$ until its $i$-sets are the same size as those in $G_2$.  A powerful construction tool, the Inflation Lemma is used repeatedly in almost all of the following results of this section.

In the next result we show that, given $i$-graphs $H_1$ and $H_2$, a new $i$-graph $H$ can be formed by identifying any two vertices in $H_1$ and $H_2$. The proof here uses Proposition \ref{prop:i:cart}, the Deletion Lemma (Lemma \ref{lem:i:inducedI}), and the Inflation Lemma (Lemma \ref{lem:i:anySize}); a proof in which a source graph of $H$ is given can be found in \cite[Proposition 3.30]{LauraD}. This result provides an alternative proof for Theorem \ref{thm:i:forests}.

\begin{prop}\label{prop:i:cutV}	
	Let $H_1$ and $H_2$ be $i$-graphs.  Then the graph $H_{x=y}$, formed by identifying a vertex $x$ of $H_1$ with a vertex $y$ of $H_2$, is also an $i$-graph.		
\end{prop}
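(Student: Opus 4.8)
The plan is to realize $H_{x=y}$ as an induced subgraph of the Cartesian product $H_1 \boksie H_2$, and then invoke the Deletion Lemma (Lemma~\ref{lem:i:inducedI}) together with Proposition~\ref{prop:i:cart}. First I would take seed graphs $G_1$ and $G_2$ with $\ig{G_1}\cong H_1$ and $\ig{G_2}\cong H_2$. Using the Inflation Lemma (Lemma~\ref{lem:i:anySize}), I may assume $i(G_1)=i(G_2)=k$; this is harmless since inflation does not change the $i$-graph. By Proposition~\ref{prop:i:cart}, $\ig{G_1\cup G_2}\cong H_1\boksie H_2$, and in the product the vertex set is $\{(X_i,Y_j)\}$ where $X_i$ ranges over $i$-sets of $G_1$ and $Y_j$ over $i$-sets of $G_2$.

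Next I would identify the relevant induced subgraph of $H_1\boksie H_2$. Let $x$ correspond to the $i$-set $X_0$ of $G_1$ and $y$ to the $i$-set $Y_0$ of $G_2$. Consider the vertex subset
\[
W = \big(\{X_0\}\times V(H_2)\big)\ \cup\ \big(V(H_1)\times\{Y_0\}\big),
\]
i.e.\ the union of the ``row'' through $y$ and the ``column'' through $x$ in the product. These two sets meet exactly in $(X_0,Y_0)$, which will play the role of the identified vertex. The subgraph of $H_1\boksie H_2$ induced on $W$ is precisely $H_{x=y}$: within the row $V(H_1)\times\{Y_0\}$ the induced graph is a copy of $H_1$ (since the second coordinate is fixed, adjacency is governed entirely by $H_1$), within the column $\{X_0\}\times V(H_2)$ it is a copy of $H_2$, the two share the single vertex $(X_0,Y_0)$, and there are no edges of $H_1\boksie H_2$ between a row vertex $(X_i,Y_0)$ with $X_i\neq X_0$ and a column vertex $(X_0,Y_j)$ with $Y_j\neq Y_0$, because such a pair differs in both coordinates. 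This last point — checking that no extraneous edges appear between the two arms — is the one place that needs care, but it follows directly from the definition of the Cartesian product.

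Finally I would apply the Deletion Lemma: since $H_1\boksie H_2 = \ig{G_1\cup G_2}$ is an $i$-graph, and it is nontrivial (assuming, as we may, that $H_1$ or $H_2$ has at least two vertices; the degenerate case where both are $K_1$ gives $H_{x=y}=K_1$, handled separately), every induced subgraph is an $i$-graph by Lemma~\ref{lem:i:inducedI}. Deleting all vertices of $(V(H_1)\times V(H_2))\setminus W$ one at a time yields that the induced subgraph on $W$, namely $H_{x=y}$, is an $i$-graph. I expect the main obstacle to be purely bookkeeping: verifying the induced-subgraph identification (especially the absence of cross edges between the two arms and the fact that the shared vertex behaves as the identification point) and tracking the trivial edge cases so that the Deletion Lemma's nontriviality hypothesis is met.
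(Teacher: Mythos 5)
Your proposal is correct and follows exactly the paper's argument: inflate so that $i(G_1)=i(G_2)$, apply Proposition~\ref{prop:i:cart} to realize $H_1 \boksie H_2$, observe that $H_{x=y}$ is the induced subgraph on the row through $y$ and the column through $x$, and finish with the Deletion Lemma. The only difference is that you spell out the cross-shaped induced-subgraph verification that the paper leaves implicit, which is a welcome addition rather than a deviation.
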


\begin{proof}
Suppose $G_1$ and $G_2$ are graphs such that $\ig{G_1}=H_1$ and $\ig{G_2} = H_2$.  Applying the Inflation Lemma we may assume that $i(G_1) = i(G_2)=k \geq 2$.  By Proposition \ref{prop:i:cart} there is a graph $G$ such that $\ig{G} = H_1 \boksie H_2$.  Since $H_{x=y}$ is an induced subgraph of  $H_1 \boksie H_2$,  we may apply the Deletion Lemma and delete all other vertices of $H_1 \boksie H_2$ until only $H_{x=y}$ remains.    
\end{proof}

\medskip

We use Proposition \ref{prop:i:cutV} to show that two $i$-graphs may be connected by an edge between any two vertices to produce a new $i$-graph.  A proof that gives a source graph for this new $i$-graph is given in \cite[Proposition 3.26]{LauraD}.

\begin{prop}\label{prop:i:brg}
	Let $H_1$ and $H_2$ be disjoint $i$-graphs.  Then the graph $H_{xy}$, formed by connecting  $H_1$ to $H_2$ by an edge between any $x\in V(H_1)$ and any $y \in V(H_2)$, is also an $i$-graph.			
\end{prop}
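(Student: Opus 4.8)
The plan is to reduce the bridge construction to the cut-vertex construction of Proposition~\ref{prop:i:cutV} by an auxiliary-vertex trick. The key observation is that connecting $H_1$ to $H_2$ by an edge $xy$ is the same as taking an edge $K_2$ — which is itself an $i$-graph, since $K_2 \cong \ig{K_2}$ — and identifying one of its endpoints with $x \in V(H_1)$ and the other endpoint with $y \in V(H_2)$. So I would first form $H_1' := H_1$ with a pendant vertex $x'$ attached at $x$; this is an $i$-graph by Lemma~\ref{lem:leafAdd}. Then I would identify the vertex $x'$ of $H_1'$ with the vertex $y$ of $H_2$, which is legitimate by Proposition~\ref{prop:i:cutV}. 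The resulting graph is precisely $H_{xy}$: the edge $xx'$ of $H_1'$ has become the bridge $xy$, with $x' = y$.

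More carefully, the steps in order are: (1) Note $K_2$ is an $i$-graph (Proposition~\ref{prop:comp} or the remark that $\ig{K_2}\cong K_2$). (2) Apply Proposition~\ref{prop:i:cutV} to identify a vertex of $H_1$ (namely $x$) with an endpoint of $K_2$; the result is $H_1$ with a pendant edge attached at $x$, call its new leaf $x'$ — alternatively invoke Lemma~\ref{lem:leafAdd} directly. Call this $i$-graph $H_1^{+}$. (3) Apply Proposition~\ref{prop:i:cutV} again, this time identifying the vertex $x'$ of $H_1^{+}$ with the vertex $y$ of $H_2$. (4) Check that the graph so produced is isomorphic to $H_{xy}$: its vertex set is $(V(H_1)\cup V(H_2))$ with $x'$ and $y$ merged into a single vertex, all edges of $H_1$ and $H_2$ retained, plus the single edge $x x'$ from $H_1^{+}$, which now joins $x\in V(H_1)$ to the merged vertex lying in $V(H_2)$. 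That is exactly one edge between $x$ and $y$, so the graph is $H_{xy}$.

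I do not expect any genuine obstacle here; the whole argument is bookkeeping once the right decomposition is spotted. The one point requiring a moment's care is the degenerate possibility that $H_1$ or $H_2$ is a single vertex, but Proposition~\ref{prop:i:cutV} and Lemma~\ref{lem:leafAdd} both already handle trivial and nontrivial $i$-graphs, so no special case is needed. An equally short alternative, if one prefers to avoid the two-step identification, is to observe directly that $H_{xy}$ is obtained from $H_1 \boksie H_2$-type constructions: but the cleanest route is the pendant-then-identify argument above, and that is what I would write up.
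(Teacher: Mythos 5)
Your proposal is correct and follows essentially the same route as the paper: the paper's proof also takes $H_3 \cong K_2$ and applies Proposition \ref{prop:i:cutV} twice, first identifying $x$ with one endpoint of the $K_2$ and then identifying the resulting pendant vertex with $y \in V(H_2)$. Your additional remark that the first identification can equivalently be done via Lemma \ref{lem:leafAdd} is a harmless variant of the same idea.
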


\begin{proof}
 Let $H_{3}\simeq K_{2}$ with $V(H_{3})=\{u,v\}$. Applying Proposition \ref{prop:i:cutV} twice,
we see that the graph $H_{xu}$ obtained by identifying $x\in VH_{1})$
with $u\in V(H_{3})$, and the graph $H_{xy}$ obtained by identifying $v\in
V(H_{xu})$ with $y\in V(H_{2})$ are $i$-graphs.   
\end{proof}

\medskip

The following corollary provides a way to connect two $i$-graphs with a clique rather than a bridge.  A constructive proof in which a source graph for the resulting $i$-graph is provided can be found in \cite[Corollary 3.27]{LauraD}.


\begin{coro}\label{coro:i:brg}
	Let $H_1$ and $H_2$ be $i$-graphs, and let $H$ be the graph formed from them as in Proposition \ref{prop:i:brg} by creating a bridge $xy$ between them.  Then the graph $H_m$  formed by replacing $xy$ with a $K_m$ for $m\geq 2$ is also an $i$-graph.
\end{coro}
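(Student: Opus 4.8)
The plan is to obtain $H_m$ from $H$ by iterating the Max Clique Replacement Lemma (Lemma \ref{lem:i:cliqueRep}), using the fact (Proposition \ref{prop:i:brg}) that $H$ is already an $i$-graph. First I would check that $\{x,y\}$ is a \emph{maximal} clique of $H$. Since $xy$ is a bridge, $x$ and $y$ have no common neighbour in $H$: every other neighbour of $x$ lies in $H_1$, every other neighbour of $y$ lies in $H_2$, and those parts are joined only by the edge $xy$. (If one of $H_1,H_2$ is trivial, the corresponding endpoint has no neighbour besides the other endpoint, and the conclusion still holds.) Hence Lemma \ref{lem:i:cliqueRep} applies and yields an $i$-graph $H^{(1)}$ obtained from $H$ by adding one new vertex $w_1$ adjacent to each of $x$ and $y$ and to nothing else.

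Second, I would run an induction on the number of added vertices. Suppose after $j$ steps, $0\le j\le m-3$, we have an $i$-graph $H^{(j)}$ in which $K:=\{x,y,w_1,\dots,w_j\}$ induces a clique and each $w_i$ is adjacent only to the other vertices of $K$ (this is exactly what Lemma \ref{lem:i:cliqueRep} guarantees about the vertices it supplies). I claim $K$ is a maximal clique of $H^{(j)}$: for $j=0$ this is the observation above, and for $j\ge 1$ any vertex adjacent to all of $K$ is in particular adjacent to $w_1$, hence must be $x$, $y$, or some $w_i$. So Lemma \ref{lem:i:cliqueRep} again applies and produces an $i$-graph $H^{(j+1)}$ by attaching a fresh vertex $w_{j+1}$ to all of $K$, which preserves the inductive hypothesis. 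After $m-2$ applications, $H^{(m-2)}$ is the graph in which $\{x,y,w_1,\dots,w_{m-2}\}$ induces a $K_m$ and which otherwise coincides with $H$; that is, $H^{(m-2)}\cong H_m$, so $H_m$ is an $i$-graph. (For $m=2$ there is nothing to prove.) A minor technical point: the proof of Lemma \ref{lem:i:cliqueRep} is written for a seed graph with $i$-sets of size at least $2$, but this costs nothing — one may first invoke the Inflation Lemma (Lemma \ref{lem:i:anySize}) on a seed graph of $H$ — or it can be ignored if Lemma \ref{lem:i:cliqueRep} is treated as a black box about $i$-graphs.

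The reconfiguration content is entirely outsourced to Lemma \ref{lem:i:cliqueRep}, so the only real obstacle is bookkeeping: one must verify at every step that the growing clique $\{x,y,w_1,\dots,w_j\}$ is still a \emph{maximal} clique, which is precisely the hypothesis Lemma \ref{lem:i:cliqueRep} needs. This is where the bridge assumption is essential, together with the observation that each vertex introduced by Lemma \ref{lem:i:cliqueRep} has no neighbours outside the clique it is glued onto — these two facts are exactly what make the induction go through.
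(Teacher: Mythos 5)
Your proposal is correct and follows the same route as the paper, whose entire proof is ``Apply the Max Clique Replacement Lemma to the edge $xy$ in Proposition \ref{prop:i:brg}.'' You have simply made explicit the iteration and the verification (via the bridge hypothesis and the fact that each new vertex is attached only to the current clique) that the growing clique remains maximal at every step, details the paper leaves to the reader.
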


\begin{proof}
Apply the Max Clique Replacement Lemma (Lemma \ref{lem:i:cliqueRep}) to the edge $xy$ in Proposition \ref{prop:i:brg}. 
\end{proof}

\medskip

The next proposition provides a method for combining two $i$-graphs without connecting them by an edge. 

\begin{prop} \label{prop:i:disjointU}
	If $H_1$ and $H_2$ are $i$-graphs, then $H_1 \cup H_2$ is an $i$-graph.		
\end{prop}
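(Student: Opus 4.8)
The goal is to show that if $H_1$ and $H_2$ are $i$-graphs, then the disjoint union $H_1 \cup H_2$ is an $i$-graph. I would proceed by combining the Inflation Lemma (Lemma~\ref{lem:i:anySize}), Proposition~\ref{prop:i:cart}, and the Deletion Lemma (Lemma~\ref{lem:i:inducedI}), in the same spirit as Proposition~\ref{prop:i:cutV}. First I would take graphs $G_1$ and $G_2$ with $\ig{G_1} \cong H_1$ and $\ig{G_2} \cong H_2$, and apply the Inflation Lemma to assume that $i(G_1) = i(G_2) = k$ for some $k \geq 2$. By Proposition~\ref{prop:i:cart}, there is a graph $G$ (namely $G_1 \cup G_2$) with $\ig{G} \cong H_1 \boksie H_2$.

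**The key step.** The obstacle is that the Cartesian product $H_1 \boksie H_2$ is connected (when $H_1, H_2$ are), so one cannot simply delete vertices to obtain the \emph{disjoint} union $H_1 \cup H_2$ as an induced subgraph—$H_1 \cup H_2$ is generally not an induced subgraph of $H_1 \boksie H_2$. The trick is to pad one of the factors first. Instead of using $H_2$ directly, I would use $H_2' := H_2 \cup \{v\}$, which is an $i$-graph by Lemma~\ref{lem:vAdd}. Then $H_1 \boksie H_2'$ contains, as an induced subgraph, the copy of $H_1$ sitting over the isolated vertex $v$ of $H_2'$ together with the copy of $H_2$ sitting over any single fixed vertex $x_0$ of $H_1$; these two copies share no vertex (the shared vertex would be $(x_0, v)$, which lies in the $H_1$-copy over $v$ and the $H_2$-copy over $x_0$)—actually they do intersect in $(x_0,v)$, so I need to be slightly more careful: take the copy of $H_1$ over $v$ together with the copy of $H_2$ over a vertex $x_1 \neq x_0$... wait, the $H_2$-copy over $x_1$ still meets the $H_1$-copy over $v$ in $(x_1, v)$. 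So the cleaner route is: use $H_1' = H_1 \cup \{u\}$ and $H_2' = H_2 \cup \{v\}$ (both $i$-graphs by Lemma~\ref{lem:vAdd}), and observe that in $H_1' \boksie H_2'$ the induced subgraph on $\bigl(V(H_1) \times \{v\}\bigr) \cup \bigl(\{u\} \times V(H_2)\bigr)$ is exactly $H_1 \cup H_2$ (a copy of $H_1$ over the isolated vertex $v$, a disjoint copy of $H_2$ over the isolated vertex $u$, with no edges between them and no shared vertex since $(u,v)$ is excluded from both pieces as $u \notin V(H_1)$ and $v \notin V(H_2)$).

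**Finishing.** So the proof runs: let $H_1' = H_1 \cup \{u\}$ and $H_2' = H_2 \cup \{v\}$, which are $i$-graphs by Lemma~\ref{lem:vAdd}; by the Inflation Lemma choose seed graphs with equal $i$-number; by Proposition~\ref{prop:i:cart}, $H_1' \boksie H_2'$ is an $i$-graph; then note $H_1 \cup H_2$ is an induced subgraph of $H_1' \boksie H_2'$ — it is the subgraph induced by $\bigl(V(H_1) \times \{v\}\bigr) \cup \bigl(\{u\} \times V(H_2)\bigr)$ — and apply the Deletion Lemma (Lemma~\ref{lem:i:inducedI}) to delete all remaining vertices, leaving $H_1 \cup H_2$. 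The one point to verify carefully in writing this up is the claim that this vertex set induces precisely $H_1 \cup H_2$: within $V(H_1)\times\{v\}$ two vertices are adjacent in the product iff they are adjacent in $H_1'$ (since the second coordinates are equal), giving a copy of $H_1$ (the vertex $(u,v)$ is excluded); symmetrically for $\{u\}\times V(H_2)$; and a vertex $(x,v)$ with $x \in V(H_1)$ is never adjacent to a vertex $(u,y)$ with $y \in V(H_2)$ in the product, because they differ in both coordinates. This is routine but should be stated explicitly.
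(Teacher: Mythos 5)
Your argument is correct, but it takes a genuinely different route from the paper. The paper's proof is direct: after inflating so that $i(G_1)=i(G_2)\geq 2$, it takes the \emph{join} $G=G_1\vee G_2$ and verifies by hand that the $i$-sets of $G$ are exactly the $i$-sets of $G_1$ together with the $i$-sets of $G_2$ (an independent set in a join lives entirely in one factor, and the join edges make it dominate the other factor), and that no $i$-set of $G_1$ can be adjacent in $\ig{G}$ to an $i$-set of $G_2$ since the two sets are disjoint and of size at least $2$. Your proof instead pads both factors with an isolated vertex via Lemma~\ref{lem:vAdd}, forms $H_1'\boksie H_2'$ via Proposition~\ref{prop:i:cart}, and carves out $\bigl(V(H_1)\times\{v\}\bigr)\cup\bigl(\{u\}\times V(H_2)\bigr)$ with the Deletion Lemma; your identification of this induced subgraph with $H_1\cup H_2$ (no shared vertex, no cross edges, since adjacent vertices in a Cartesian product agree in one coordinate) is correct, and the double padding is exactly the right fix for the intersection problem you noticed mid-argument. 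What each approach buys: the paper's join construction is shorter, self-contained, and produces an explicit seed graph for $H_1\cup H_2$; yours reuses the product-plus-deletion machinery of Proposition~\ref{prop:i:cutV} uniformly and avoids any fresh analysis of $i$-sets, at the cost of an indirect seed graph and a (harmless but unnecessary) invocation of the Inflation Lemma --- Proposition~\ref{prop:i:cart} needs no hypothesis on $i(G_1)$ versus $i(G_2)$. Both are valid proofs.
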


\begin{proof}
	Suppose $G_1$ and $G_2$ are graphs such that $\ig{G_1}=H_1$ and $\ig{G_2} = H_2$.  We assume that  $i(G_1) = i(G_2) \geq 2$.  Otherwise, apply the Inflation Lemma (Lemma \ref{lem:i:anySize}) to obtain graphs with $i$-sets of equal size at least 2.  Let $G=G_1 \vee G_2$, the join of $G_1$ and $G_2$. We claim that $\ig{G} = H_1 \cup H_2$.   
	
	We proceed similarly to the proof of Proposition \ref{prop:i:brg}; namely, if $S$ is an $i$-set of $G_1$, of $G_2$, then $S$ is an independent dominating set of $G$.  Likewise, we observe that any $i$-set of $G$ is a subset of $G_1$ or $G_2$, and so, $S$ is a $i$-set of $G$ if and only if it is an $i$-set of $G_1$ or $G_2$.  
	
	Suppose $\edgeG{X,x,y,Y,G_1}$.  Then in $G$, sets $X$ and $Y$ are still $i$-sets, and likewise, vertices $X$ and $Y$ are still adjacent, and so $\edgeG{X,x,y,Y,G}$.  Now suppose instead that $X$ is an $i$-set of $G_1$ and $Y$ is an $i$-set of $G_2$.  Within $G$, $X \cap Y = \varnothing$ and $|X|=|Y| \geq 2$, so $X$ and $Y$ are not adjacent in $\ig{G}$.  Therefore, $\adjG{X,Y,G}$ if and only if $\adjG{X,Y,G_1}$ or $\adjG{X,Y,G_2}$.  It follows that $\ig{G}= \ig{G_1} \cup \ig{G_2} = H_1\cup H_2$ as required.	
\end{proof}

\medskip

Applying these new tools in combination yields some unexpected results.  For example, the following corollary, which makes use of the previous Proposition \ref{prop:i:disjointU} in partnership with the Deletion Lemma (a construction for combining $i$-graphs and a construction for vertex deletions) gives our first result on $i$-graph edge deletions.

\begin{coro}\label{coro:i:cutBridge}	
	Let $H$ be an $i$-graph with a bridge $e$, such that the deletion of $e$ separates $H$ into components $H_1$ and $H_2$.  Then
	\begin{enumerate}[itemsep=1pt, label=(\roman*)]	
		\item \label{coro:i:cutBridge:1} $H_1$ and $H_2$ are $i$-graphs, and
		\item \label{coro:i:cutBridge:2} the  graph $H^*=H-e$ is an $i$-graph. 	
	\end{enumerate}
\end{coro}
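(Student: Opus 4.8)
The plan is to obtain both parts by assembling two results already established: the Deletion Lemma (Lemma~\ref{lem:i:inducedI}) and Proposition~\ref{prop:i:disjointU} on disjoint unions of $i$-graphs. Since $H$ has a bridge it contains at least one edge, hence at least two vertices, so $H$ is a \emph{nontrivial} $i$-graph; moreover $H_1$ and $H_2$ are, by construction, induced subgraphs of $H$. For part~\ref{coro:i:cutBridge:1} I would simply invoke the Deletion Lemma: any induced subgraph of a nontrivial $i$-graph is itself an $i$-graph, so both $H_1$ and $H_2$ are $i$-graphs. (If one of them happens to be a single vertex, this is still fine, since $K_1 = \ig{\overline{K_2}}$ is an $i$-graph.)

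For part~\ref{coro:i:cutBridge:2}, I would first record the elementary graph-theoretic fact that deleting a bridge $e$ whose removal separates $H$ into the components $H_1$ and $H_2$ leaves precisely the disjoint union $H - e = H_1 \cup H_2$. Combining this with part~\ref{coro:i:cutBridge:1}, both summands are $i$-graphs, so Proposition~\ref{prop:i:disjointU} yields that $H_1 \cup H_2 = H - e = H^*$ is an $i$-graph, which finishes the argument.

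There is no genuine obstacle here: the corollary is a direct consequence of tools already in hand, and the only points requiring (routine) care are checking that $H$ is nontrivial so that the Deletion Lemma applies, and that $H - e$ is exactly the disjoint union of the two named components and nothing larger. In fact part~\ref{coro:i:cutBridge:1} is a special case of the Deletion Lemma and part~\ref{coro:i:cutBridge:2} is then immediate, so the whole statement could be proved in essentially one line; I would retain the two-part phrasing only to highlight the edge-deletion consequence, which is the new feature emphasized in the surrounding discussion.
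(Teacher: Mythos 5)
Your proposal is correct and matches the paper's own proof: part~\ref{coro:i:cutBridge:1} is obtained from the Deletion Lemma (Lemma~\ref{lem:i:inducedI}), and part~\ref{coro:i:cutBridge:2} from Proposition~\ref{prop:i:disjointU} applied to $H-e = H_1 \cup H_2$. Your extra remarks on nontriviality and on $H-e$ being exactly the disjoint union are routine checks the paper leaves implicit.
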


\begin{proof}
	Part \ref{coro:i:cutBridge:1} follows immediately from Lemma \ref{lem:i:inducedI}.  
	For \ref{coro:i:cutBridge:2}, by Part \ref{coro:i:cutBridge:1}, $H_1$ and $H_2$ are $i$-graphs.  Proposition \ref{prop:i:disjointU} now implies that $H_1 \cup H_2 = H-e$ is also an $i$-graph. 
\end{proof}

\medskip

Combining the results of Proposition \ref{prop:i:cutV}	with Proposition \ref{prop:i:brg} and Corollary \ref{coro:i:cutBridge}, yields the following main result.

\begin{theorem}\label{thm:i:iffBlock}
	A graph $G$ is an $i$-graph if and only if all of its blocks are $i$-graphs.
\end{theorem}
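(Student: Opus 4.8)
The plan is to prove both directions by leveraging the block-decomposition structure together with the three previously-established combining tools. For the forward direction, suppose $G$ is an $i$-graph and let $B$ be one of its blocks; I want to show $B$ is an $i$-graph. The idea is to peel off the blocks one at a time. A graph with at least two blocks has a block $B'$ that is a leaf in the block-cut tree, meaning $B'$ shares exactly one cut vertex $x$ with the rest of $G$. I would argue (by induction on the number of blocks) that $B'$ is an induced subgraph of $G$: since $B'$ meets the rest of $G$ only at the single vertex $x$, deleting all vertices of $G$ outside $B'$ leaves precisely $B'$, and $B'$ is certainly an induced subgraph. By the Deletion Lemma (Lemma~\ref{lem:i:inducedI}), $B'$ is an $i$-graph. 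For a non-leaf block $B$, repeatedly delete leaf blocks (again via the Deletion Lemma, each deletion keeping us in the realm of $i$-graphs) until $B$ becomes a leaf block, and then apply the argument above. Thus every block of $G$ is an $i$-graph.

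For the converse, suppose every block of $G$ is an $i$-graph; I want to reassemble $G$. Again induct on the number of blocks. If $G$ has one block, there is nothing to prove. Otherwise, identify a leaf block $B'$ of the block-cut tree attached at cut vertex $x$, and let $G' = G - (V(B') \setminus \{x\})$ be the graph obtained by removing that leaf block but keeping $x$. Every block of $G'$ is a block of $G$, hence an $i$-graph, so by the induction hypothesis $G'$ is an $i$-graph; and $B'$ is an $i$-graph by assumption. Now $G$ is obtained from $G'$ and $B'$ by identifying the vertex $x \in V(G')$ with the corresponding vertex $x \in V(B')$. By Proposition~\ref{prop:i:cutV}, the result of such a vertex-identification of two $i$-graphs is an $i$-graph, so $G$ is an $i$-graph, completing the induction.

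The main obstacle — really the only subtlety — is making the leaf-block bookkeeping precise: one must be careful that removing a leaf block and keeping its cut vertex yields a graph whose blocks are exactly the remaining blocks of $G$ (so the induction hypothesis applies), and that in the forward direction the leaf block sits as a genuine induced subgraph so that the Deletion Lemma is applicable. Both are standard facts about the block-cut tree, but they are what makes the two one-line invocations of Proposition~\ref{prop:i:cutV} and Lemma~\ref{lem:i:inducedI} legitimate. Everything else is routine: Proposition~\ref{prop:i:brg} and Corollary~\ref{coro:i:cutBridge} are not strictly needed here, since a bridge is itself a $K_2$ block and identification at a cut vertex handles the general case, though one could alternatively phrase the reassembly using Proposition~\ref{prop:i:brg} when the connecting block is $K_2$.
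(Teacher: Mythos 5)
Your proof is correct and is essentially the argument the paper intends: the paper offers no written proof of this theorem, simply citing the combination of Proposition~\ref{prop:i:cutV}, Proposition~\ref{prop:i:brg} and Corollary~\ref{coro:i:cutBridge}, and your induction over leaf blocks of the block-cut tree --- the Deletion Lemma (Lemma~\ref{lem:i:inducedI}) for the forward direction and Proposition~\ref{prop:i:cutV} for the reverse --- is the natural way to make that precise. Two small remarks: in the forward direction each block is already an induced subgraph of $G$, so one application of the Deletion Lemma suffices without peeling; and if $G$ is disconnected the reassembly of the components requires Proposition~\ref{prop:i:disjointU} (disjoint unions of $i$-graphs are $i$-graphs), a case your block-cut-tree induction does not explicitly cover.
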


\noindent As observed in Corollary \ref{coro:i:notInduced}, graphs with an induced $\Dia$ subgraph are not $i$-realizable.  If we consider the family of connected chordal graphs excluding those with an induced copy of  $\Dia$, we are left with the family of block graphs (also called clique trees): graphs where each block is a clique.  As cliques are their own $i$-graph, the following is immediate.

\begin{prop} \label{prop:i:block}
	Block graphs are $i$-graph realizable.  
\end{prop}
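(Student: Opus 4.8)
The plan is to deduce Proposition \ref{prop:i:block} as an essentially immediate consequence of Theorem \ref{thm:i:iffBlock} together with the fact that complete graphs are their own $i$-graphs (Proposition \ref{prop:comp}). First I would recall the definition: a \emph{block graph} is a connected graph in which every block (maximal 2-connected subgraph, or bridge/$K_2$, or $K_1$) is a complete graph. So let $G$ be a block graph. Every block $B$ of $G$ is isomorphic to some $K_m$ with $m \geq 1$; by Proposition \ref{prop:comp} (and the trivial cases $\ig{K_1}\cong K_1$, $\ig{K_2}\cong K_1$ handled by, e.g., taking $K_2$ itself since $i(K_2)=1$ gives two $i$-sets... actually one should be slightly careful: $\ig{K_2}$ has two $i$-sets $\{u\},\{v\}$ adjacent via the edge, so $\ig{K_2}\cong K_2$, consistent with $\ig{K_n}\cong K_n$), each such block $B$ is an $i$-graph. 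Hence every block of $G$ is an $i$-graph, and Theorem \ref{thm:i:iffBlock} immediately yields that $G$ is an $i$-graph.

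The key steps in order are therefore: (1) unpack the definition of block graph so that each block is a complete graph $K_m$; (2) invoke Proposition \ref{prop:comp} (and the trivial observation $\ig{K_n}\cong K_n$ noted just before it) to conclude each block is $i$-graph realizable; (3) apply the ``if'' direction of Theorem \ref{thm:i:iffBlock} to conclude $G$ itself is an $i$-graph. There is essentially no computation; the content is entirely in citing the two prior results correctly.

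The only subtlety — and the closest thing to an ``obstacle'' — is making sure the degenerate blocks are covered: a block that is a single edge is $K_2$, and an isolated vertex (a one-vertex block, which only occurs if $G=K_1$) is $K_1$; both are complete graphs, and $\ig{K_1}\cong K_1$, $\ig{K_2}\cong K_2$ are both $i$-graphs, so Theorem \ref{thm:i:iffBlock} applies uniformly. One should also note that Theorem \ref{thm:i:iffBlock} is stated for a graph $G$ and its blocks without a connectivity hypothesis in its statement, but a block graph is by convention connected; if one wanted the disconnected version it would follow additionally from Proposition \ref{prop:i:disjointU}. I would write the proof in two sentences: ``Every block of a block graph is a complete graph, hence an $i$-graph by Proposition \ref{prop:comp}. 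By Theorem \ref{thm:i:iffBlock}, the block graph itself is an $i$-graph.'' The main risk is simply over-writing what is a one-line corollary.
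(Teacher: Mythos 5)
Your proposal is correct and matches the paper's own justification: the paper derives this proposition as an immediate consequence of Theorem \ref{thm:i:iffBlock} together with the fact that complete graphs are their own $i$-graphs (Proposition \ref{prop:comp}). The paper additionally supplies an explicit direct construction of a seed graph (Construction \ref{cons:i:block}), but that is offered as a practical alternative, not as the proof of the proposition.
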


Cacti are  graphs whose blocks are cycles or edges.  Thus, we have the following immediate corollary.


\begin{coro} \label{coro:i:cacti}	
	Cactus graphs are $i$-graph realizable.  
\end{coro}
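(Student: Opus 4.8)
The plan is to derive Corollary~\ref{coro:i:cacti} directly from Theorem~\ref{thm:i:iffBlock} together with the building blocks already established for cycles and edges. First I would recall that a cactus graph is, by definition, a connected graph in which every block is either a single edge ($K_2$) or a cycle $C_n$ for some $n \geq 3$. (If one allows disconnected cacti, the same argument applies componentwise via Proposition~\ref{prop:i:disjointU}, but the cleanest statement is for the connected case.) So the entire content of the corollary reduces to verifying the hypothesis of Theorem~\ref{thm:i:iffBlock}: every block of a cactus is an $i$-graph.

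The second step is to observe that both kinds of block are already known to be $i$-graph realizable. An edge $K_2$ is a complete graph, so $K_2$ is an $i$-graph by Proposition~\ref{prop:comp} (indeed $\ig{K_2} \cong K_2$). Every cycle $C_n$ with $n \geq 3$ is an $i$-graph by Proposition~\ref{prop:i:cycle}. Hence every block of a cactus graph is an $i$-graph.

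The third and final step is simply to invoke Theorem~\ref{thm:i:iffBlock}: a graph is an $i$-graph if and only if all of its blocks are. Since all blocks of a cactus are $i$-graphs (by the previous step), the cactus itself is an $i$-graph. This completes the proof.

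I do not anticipate any genuine obstacle here, since all the heavy lifting has already been done: the structural characterization of Theorem~\ref{thm:i:iffBlock} converts the problem into a purely local one, and the two block types that occur in cacti (edges and cycles) were handled in Propositions~\ref{prop:comp} and~\ref{prop:i:cycle}. The only point requiring the slightest care is making sure the definition of ``cactus'' being used matches ``every block is a cycle or an edge'' — which is exactly the definition given in the sentence preceding the corollary — so no case analysis or construction is needed; the corollary is immediate from the block characterization, exactly as the prose ``we have the following immediate corollary'' indicates.
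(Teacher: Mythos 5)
Your proof is correct and follows exactly the route the paper intends: the corollary is immediate from Theorem~\ref{thm:i:iffBlock} once one notes that every block of a cactus is an edge or a cycle, both handled by Propositions~\ref{prop:comp} and~\ref{prop:i:cycle}. No issues.
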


While the proof of Proposition \ref{prop:i:cutV} does provide a method for building block graphs, it is laborious to do so on a graph with many blocks, as the construction is iterative, with each block being appended one at a time. However, when we consider that the blocks of block graphs are complete graphs, and that complete graphs are their own $i$-graphs (and thus arguably the easiest $i$-graphs to construct), it is logical that there is a simpler construction.  We offer one such construction below.  An example of this process is illustrated in Figure~\ref{fig:i:block}.

\begin{cons} \label{cons:i:block} Let $H$ be a block graph with $V(H) = \{v_1,v_2,\dots, v_n\}$ and let  $\mathcal{B}_H = \{B_1, B_2, \dots, B_m\}$ be the collection of maximal cliques of $H$.  	To construct a graph $G$ such that $\ig{G}=H$:
	
	\begin{enumerate}[itemsep=1pt, label=(\roman*)]	
		\item Begin with a copy of each of the maximal cliques of $H$, labelled $A_1, A_2, \dots, A_m$ in $G$, where $A_i$ of $G$ corresponds to $B_i$ of $H$ for each $1 \leq i \leq m$, and the $A_i$ are pairwise disjoint. Notice that each cut vertex of $H$ has multiple corresponding vertices in $G$.
		
		\item \label{item:cons:i:block:cutV} Let $v \in V(H)$ be a cut vertex and $\mathcal{B}_v$ be the collection of blocks containing $v$ in $H$; for notational ease, say  $\mathcal{B}_v= \{B_1,B_2,\dots,B_k\}$, and suppose that $W=\{w_1, w_2,\dots, w_k\} \subseteq V(G)$ are the $k$ vertices corresponding to $v$, where $w_i \in A_i$ for all $1 \leq i \leq k$.  
		
		For each distinct pair $w_i$ and $w_j$ of $W$, add to $G$ three internally disjoint paths of length two between $w_i$ and $w_j$.  Since $v$ is in $k$ blocks of $H$, $3\binom{k}{2}$ vertices are added in this process.  These additions are represented as the green vertices in Figure \ref{fig:i:block}.	
		
		\item Repeat Step \ref{item:cons:i:block:cutV} for each cut vertex of $H$.
	\end{enumerate}	
\end{cons}	


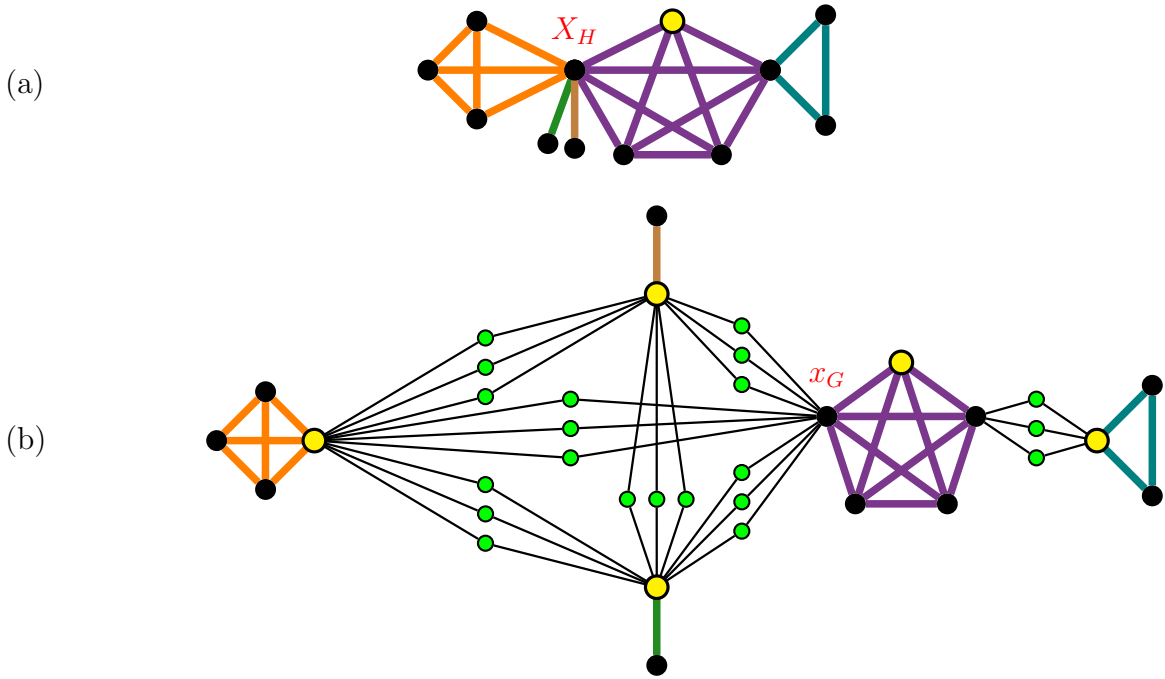
\begin{figure}[H]
	\begin{minipage}{0.1\textwidth}
		(a)
	\end{minipage}	
	\begin{minipage}{0.8\textwidth} \centering
		\begin{tikzpicture}[line width=0.3mm,scale=1.3]			
			
			\node(cent) at (0,0) {};	
			
			\path (cent) ++(0:10 mm) node (centR)  {};
			\path (cent) ++(0:-10 mm) node (centL)  {};
			
			\path (centL) ++(0:-10 mm) node (centA)  {};
			\path (centL) ++(0: 0 mm) node (centB)  {};
			\path (centL) ++(0: 0 mm) node (centD)  {};
			\path (cent) ++(0: 0 mm) node (centE)  {};
			\path (centR) ++(0: 0 mm) node (centF)  {};
			
			\foreach \label/\j in {2/90,3/180, 4/270}
			{	\path (centA) ++(\j:5 mm) node[std](a\label)  {};	
			}
			\path (centL) ++(0:0 mm) node[std](a1)  {};	
			\draw(a1)--(a2)--(a3)--(a4)--(a1)--(a3);
			\draw(a2)--(a4);
			
			\foreach \label/\j in {1/0,2/8}
			{	\path (centB) ++(270:\j mm) node[std](b\label)  {};	
			}
			\draw(b1)--(b2);			
			
			\foreach \label/\j in {1/0,2/8}
			{	\path (centD) ++(250:\j mm) node[std](d\label)  {};	
			}
			\draw(d1)--(d2);	
			
			\foreach \label/\j/\k in {2/90/5, 4/240/10, 5/300/10}
			{	\path (centE) ++(\j:\k mm) node[std](e\label)  {};	
			}
			
			\path (centL) ++(0:0 mm) node[std](e3)  {};	
			\path (e3) ++(90:4 mm) node (el3)  {\color{red}$X_H$};	
			\path (centR) ++(0:0 mm) node[std](e1)  {};	
			
			\draw(e1)--(e2)--(e3)--(e4)--(e5)--(e1)--(e3)--(e5)--(e2)--(e4)--(e1);	
			
			\foreach \label/\j in {2/45,3/-45}
			{	\path (centF) ++(\j:8 mm) node[std](f\label)  {};	
			}
			\path (centR) ++(0:0 mm) node[std](f1)  {};	
			\draw(f1)--(f2)--(f3)--(f1);	
			
			{
				\draw[ltteal, line width=1mm](f1)--(f2)--(f3)--(f1);
				\draw[medOrch, line width=1mm](e1)--(e2)--(e3)--(e4)--(e5)--(e1)--(e3)--(e5)--(e2)--(e4)--(e1);	
				\draw[forGreen, line width=1mm](d1)--(d2);
				\draw[brown, line width=1mm](b1)--(b2);
				\draw[orange, line width=1mm](a1)--(a2)--(a3)--(a4)--(a1)--(a3);
				\draw[orange, line width=1mm](a2)--(a4);
				
			}		
			
			{
				\node[byellow](s1) at (e2) {};	
			}
		\end{tikzpicture}			
	\end{minipage}	

\vspace{5mm}

	\begin{minipage}{0.1\textwidth}
		(b)
	\end{minipage}	
	\begin{minipage}{0.9\textwidth} \centering
		\begin{tikzpicture}[line width=0.3mm,scale=1.3]			
			
			\node(cent) at (0,0) {};	
			
			\path (cent) ++(0:10 mm) node (centR)  {};
			\path (cent) ++(0:-10 mm) node (centL)  {};
			
			\path (centL) ++(0:-55 mm) node (centA)  {};
			\path (centL) ++(0: -15 mm) node (centB)  {};
			\path (centL) ++(0: -15 mm) node (centD)  {};
			\path (cent) ++(0: 0 mm) node (centE)  {};
			\path (centR) ++(0: 0 mm) node (centF)  {};

			{	
				\foreach \label/\j in {2/90,3/180, 4/270, 1/0}
				{	\path (centA) ++(\j:5 mm) node[std](a\label)  {};	
				}
				\draw(a1)--(a2)--(a3)--(a4)--(a1)--(a3);
				\draw(a2)--(a4);
				
				\foreach \label/\j in {1/15,2/23}
				{	\path (centB) ++(90:\j mm) node[std](b\label)  {};	
				}
				\draw(b1)--(b2);

				\foreach \label/\j in {1/15,2/23}
				{	\path (centD) ++(270:\j mm) node[std](d\label)  {};	
				}
				\draw(d1)--(d2);	
				
				\foreach \label/\j/\k in {2/90, 3/162, 4/234, 5/306, 1/18}
				{	\path (centE) ++(\j:8 mm) node[std](e\label)  {};	
				}
				
				\path (e3) ++(90:4 mm) node (el3)  {\color{red}$x_G$};	
				
				
				\draw(e1)--(e2)--(e3)--(e4)--(e5)--(e1)--(e3)--(e5)--(e2)--(e4)--(e1);	
				
				\path (centR) ++(0:10 mm) node[std](f1)  {};	
				
				\foreach \label/\j in {2/45,3/-45}
				{	\path (f1) ++(\j:8 mm) node[std](f\label)  {};	
				}
				
				\draw(f1)--(f2)--(f3)--(f1);	
				
				\draw[ltteal, line width=1mm](f1)--(f2)--(f3)--(f1);
				\draw[medOrch, line width=1mm](e1)--(e2)--(e3)--(e4)--(e5)--(e1)--(e3)--(e5)--(e2)--(e4)--(e1);	
				\draw[forGreen, line width=1mm](d1)--(d2);
				\draw[brown, line width=1mm](b1)--(b2);
				\draw[orange, line width=1mm](a1)--(a2)--(a3)--(a4)--(a1)--(a3);
				\draw[orange, line width=1mm](a2)--(a4);				
			}		
			
			{
				\node[regG] (ef) at ($(f1)!0.5!(e1)$) {};
				\draw(e1)--(ef)--(f1);
				\foreach \label/\j in {1/90,2/-90}
				{
					\path (ef) ++(\j:3 mm) node[regG](ef\label)  {};
					\draw(e1)--(ef\label)--(f1);
				}
				
				\node[regG] (ab) at ($(a1)!0.5!(b1)$) {};
				
				\draw(a1)--(ab)--(b1);
				\foreach \label/\j in {1/90,2/-90}
				{
					\path (ab) ++(\j:3 mm) node[regG](ab\label)  {};
					\draw(a1)--(ab\label)--(b1);
				} 	
				
				\node[regG] (ad) at ($(a1)!0.5!(d1)$) {};
				
				\draw(a1)--(ad)--(d1);
				\foreach \label/\j in {1/90,2/-90}
				{
					\path (ad) ++(\j:3 mm) node[regG](ad\label)  {};
					\draw(a1)--(ad\label)--(d1);
				} 			 		
				
				\node[regG] (ae) at ($(a1)!0.5!(e3)$) {};
				
				\draw(a1)--(ae)--(e3);
				\foreach \label/\j in {1/90,2/-90}
				{
					\path (ae) ++(\j:3 mm) node[regG](ae\label)  {};
					\draw(a1)--(ae\label)--(e3);
				} 		
				
				\node[regG] (de) at ($(d1)!0.5!(e3)$) {};
				
				\draw(d1)--(de)--(e3);
				\foreach \label/\j in {1/90,2/-90}
				{
					\path (de) ++(\j:3 mm) node[regG](de\label)  {};
					\draw(d1)--(de\label)--(e3);
				} 	
				\node[regG] (be) at ($(b1)!0.5!(e3)$) {};
				
				\draw(b1)--(be)--(e3);
				\foreach \label/\j in {1/90,2/-90}
				{
					\path (be) ++(\j:3 mm) node[regG](be\label)  {};
					\draw(b1)--(be\label)--(e3);
				} 		
				
				
				\node[regG] (bd) at ($(b1)!0.7!(d1)$) {};
				\draw(b1)--(bd)--(d1);
				\foreach \label/\j in {1/0,2/180}
				{
					\path (bd) ++(\j:3 mm) node[regG](bd\label)  {};
					\draw(b1)--(bd\label)--(d1);
				} 					
				
			}
			
			{
				\node[byellow] (se) at (e2){};
				\node[byellow] (sa) at (a1){};
				\node[byellow] (sb) at (b1){};
				\node[byellow] (sd) at (d1){};
				\node[byellow] (sf) at (f1){};
				
			}	  
		\end{tikzpicture}		
	\end{minipage}		
	\caption{The construction of $G$ from $H$  in the proof of Proposition \ref{prop:i:block}.}
	\label{fig:i:block}%
\end{figure}

To see that the graph $G$ from Construction \ref{cons:i:block}  does indeed have $\ig{G}=H$, notice that $i(G)=m$, where $m$ is the number of blocks in $H$; if $X$ is an $i$-set of $G$, then $|X \cap A_i|$=1 for each $A_i \in \{A_1, A_2, \dots A_m\}$.  Moreover, no $i$-set of $G$ has vertices in the added green vertices, because, as with the proof of Proposition \ref{prop:i:cutV}, the inclusion of any one of these green vertices in an independent dominating set necessitates the addition of them all. 

In Figure \ref{fig:i:block}(b), the five yellow vertices form the $i$-set corresponding to the yellow vertex of $G$ in Figure \ref{fig:i:block}(a).  Only the token on the purple $K_5$ can move in $G$; the other four tokens remain frozen, thereby generating the corresponding purple $K_5$ of $H$.  It is only when the token on the purple $K_5$ is moved to the vertex $x_G$ that the tokens on the orange $K_4$, and the brown and green $K_2$'s, unfreeze one clique at a time.  This corresponds to the cut vertex $i$-set $X_H$ of $H$.  The freedom of movement now transfers from the purple  $K_5$ to any of the three other cliques, allowing for the generation of their associated blocks in  $G$ as required.


Finally, before we depart from block graphs, as chordal graphs are among the most well-studied families of graphs, we offer one additional reframing of this block graph result from the chordal graph perspective.   

\begin{coro} A chordal graph is $i$-graph realizable if and only if it is $\Dia$-free.
\end{coro}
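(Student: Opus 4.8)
The plan is to obtain this statement as a short corollary of results already in hand, proving each direction separately; the only ingredient not literally an earlier numbered result is the classical observation (made informally in the paragraph preceding Proposition~\ref{prop:i:block}) that a connected chordal graph is a block graph exactly when it has no induced $\Dia$.

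For the forward implication I would argue by contrapositive, and note that chordality is not even needed here. If $G$ is \emph{any} graph containing an induced copy of $\Dia=K_4-e$, then since $\Dia$ is not $i$-graph realizable by Proposition~\ref{prop:i:diamond}, Corollary~\ref{coro:i:notInduced} gives that $G$ is not $i$-graph realizable. Hence every $i$-graph realizable graph --- chordal or not --- is $\Dia$-free, which is the ``only if'' half.

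For the reverse implication, suppose $G$ is chordal and $\Dia$-free. The key step is to identify $G$ as a block graph. Each block $B$ of $G$ is an induced subgraph of $G$, hence chordal and $\Dia$-free, and is either $K_1$, $K_2$, or $2$-connected; it remains to see that a $2$-connected chordal $\Dia$-free graph is complete. If such a $B$ were not complete, pick non-adjacent $u,v\in V(B)$ and a minimal $u$--$v$ separator $S$; by chordality $S$ is a clique, and by $2$-connectivity $|S|\ge 2$. Taking $s_1,s_2\in S$, shortest paths from $s_1$ and from $s_2$ through the two sides of $B-S$ must (using chordality to forbid long chordless cycles) have length $2$, producing a vertex $a$ on one side and a vertex $b$ on the other with $a,b$ adjacent to both $s_1$ and $s_2$ but not to each other; then $\{s_1,s_2,a,b\}$ induces $\Dia$, a contradiction. (Alternatively one may simply cite the well-known characterization of block graphs as the diamond-free chordal graphs.) Thus every block of $G$ is a clique, so $G$ is $i$-graph realizable by Proposition~\ref{prop:i:block}; equivalently, each block $K_m$ satisfies $\ig{K_m}\cong K_m$ by Proposition~\ref{prop:comp}, so Theorem~\ref{thm:i:iffBlock} applies. (If $G$ is disconnected one runs this on each component and appeals to Proposition~\ref{prop:i:disjointU}.) Combining the two implications yields the corollary.

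I expect no serious obstacle: the content is entirely in the prior results, and the one small point requiring care is the ``$2$-connected chordal $\Dia$-free $\Rightarrow$ complete'' reduction, which is standard and can be handled exactly as sketched above or dispatched by citation.
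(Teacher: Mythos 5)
Your proof is correct and follows essentially the same route the paper intends: the ``only if'' direction is Proposition~\ref{prop:i:diamond} plus Corollary~\ref{coro:i:notInduced}, and the ``if'' direction is the classical identification of $\Dia$-free chordal graphs with block graphs together with Proposition~\ref{prop:i:block}. Your explicit verification that a $2$-connected chordal $\Dia$-free graph is complete is a nice bit of added rigour for a fact the paper simply asserts, but it does not change the approach.
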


With the addition of Proposition \ref{prop:i:block} to the results used to build Observation \ref{obs:i:4v}, this leaves only the house graph (see Figure \ref{fig:i:houseConstruct}(b)) as unsettled with regard to its $i$-graph realizability among the 34 non-isomorphic graphs on five vertices.  Although not strictly a result concerning the construction of larger $i$-graphs from known results, we include the following short proposition here for the sake of completeness.

\begin{prop}\label{prop:i:house}
	The house graph $\house$ is an $i$-graph.
\end{prop}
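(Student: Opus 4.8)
The plan is to exhibit an explicit seed graph $G$ with $\ig{G} \cong \house$, since the house is small and the previous machinery (blocks, deletion, inflation) does not obviously apply: the house has a single block, it contains an induced $C_4$ but is not itself a cycle or forest, and it is chordal-free in the sense that it has an induced $C_4$, so Proposition~\ref{prop:i:block} does not reach it. So a direct construction is the natural route. I would first analyze what the house looks like structurally: it is $C_5$ with one extra chord, equivalently a triangle sharing an edge with a $C_4$; its vertices are two vertices of degree $3$ (the ``base'' of the triangle / top of the square), two of degree $2$ (bottom corners of the square), and one of degree $2$ (the apex of the triangle). The target $i$-graph thus has five $i$-sets with a prescribed adjacency pattern, and the degree sequence $(3,3,2,2,2)$ tells me how many reconfiguration moves each $i$-set must admit.

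Next I would build $G$ by starting from a known small seed graph and grafting. A promising approach: recall $\ig{C_5}\cong C_5$ with the explicit labelling in Figure~\ref{fig:i:C5C5}, and recall that the Max Clique Replacement Lemma (Lemma~\ref{lem:i:cliqueRep}) lets us hang a new vertex off a \emph{maximal} clique; but $C_5$'s maximal cliques are single edges, and adding a vertex adjacent to both ends of an edge of $C_5$ would create a triangle in the $i$-graph, turning a $C_5$ into a house only if that triangle attaches at the right edge. Concretely, the house is exactly $C_5$ together with one extra vertex joined to two \emph{adjacent} vertices of the $C_5$ — i.e. a $K_3$ glued onto an edge of a $4$-cycle, which is the same as a $5$-cycle with a chord. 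So I expect the cleanest argument is: take the $C_5$ seed $G_0$ from Figure~\ref{fig:i:C5C5}, identify an edge $V_iV_{i+1}$ of $\ig{G_0}\cong C_5$ which is a maximal clique, apply Lemma~\ref{lem:i:cliqueRep} to adjoin a new vertex $w^*$ adjacent to both $V_i$ and $V_{i+1}$, and observe that the resulting $i$-graph $C_5$-plus-a-triangle-on-one-edge is precisely $\house$ (up to isomorphism, since the house is vertex-edge-unique in that description). Alternatively, if one prefers a fully explicit seed graph rather than invoking the Clique Replacement construction, one can write $G$ down directly (a small graph on roughly $7$–$9$ vertices) and verify by hand that its $i$-sets and their slide-adjacencies reproduce the house; the paper's Figure~\ref{fig:i:houseConstruct} presumably displays exactly such a $G$.

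The key steps in order: (1) fix the labelling of $\house$ and note it equals $C_5$ with a triangle attached along one edge; (2) invoke Proposition~\ref{prop:i:cycle} (case $\ig{C_5}\cong C_5$) to get a seed $G_0$ with $\ig{G_0}\cong C_5$, noting every edge of this $C_5$ is a maximal clique; (3) apply the Max Clique Replacement Lemma to one such edge to obtain $G$ with $\ig{G}\cong C_5+\text{(pendant on a $K_2$)}$; (4) check that this augmented graph is isomorphic to $\house$. The main obstacle I anticipate is step (4) combined with making sure the Clique Replacement Lemma's hypotheses genuinely hold — specifically that the chosen $K_2$ in $\ig{G_0}$ is a \emph{maximal} clique (true for $C_5$) and that the new vertex $w^*$ ends up with degree exactly $2$ in the target, producing a triangle rather than something larger; one must also confirm that no \emph{other} new $i$-sets sneak in and that the resulting graph has exactly five vertices with the house's adjacencies, not, say, $K_4-e$ plus a pendant. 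If the Clique Replacement route proves awkward to certify, the fallback — exhibiting $G$ explicitly and directly enumerating its four or five $i$-sets and their single-token slides — is routine but tedious, and is the safest way to pin down the isomorphism with $\house$.
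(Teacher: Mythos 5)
Your primary route contains a concrete error that makes step (4) fail: you misidentify the house graph. As you yourself note early on, $\house$ is a triangle sharing an edge with a $C_4$ (equivalently $C_5$ plus a chord), a graph on \emph{five} vertices. But the construction you then carry out --- take a seed $G_0$ with $\ig{G_0}\cong C_5$ and apply the Max Clique Replacement Lemma to an edge of that $C_5$ --- produces $C_5$ with a new vertex joined to two adjacent vertices, a graph on \emph{six} vertices. That graph is not $\house$, so the claimed isomorphism in step (4) is simply false. The fix is immediate: start instead from $\ig{2K_2}\cong C_4$ (Proposition \ref{prop:hCube}), note that any edge of this $C_4$ is a maximal clique, and apply Lemma \ref{lem:i:cliqueRep} to hang a new vertex off both its endpoints; the result is $C_4$ with a triangle erected on one edge, which \emph{is} $\house$. (One can check directly that the lemma's hypotheses hold here: $i(2K_2)=2$, and the new $i$-set $\{w,b_1\}$ attaches only to the two $i$-sets of the chosen edge.)

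With that correction your argument is valid and is a genuinely different proof from the paper's. The paper does exactly what you describe as your fallback: it exhibits an explicit five-vertex seed graph ($K_3$ with a $P_3$ tail, Figure \ref{fig:i:houseConstruct}(a)), enumerates its five $i$-sets, and reads off the adjacencies to get $\house$ directly. Your (corrected) route trades that small hand verification for an appeal to the already-proved Max Clique Replacement Lemma applied to a known $i$-graph, which is arguably more in the spirit of Section \ref{sec:i:tools} and generalizes more readily (the same argument shows, for instance, that any cycle with a triangle attached along one edge is an $i$-graph); the paper's route has the advantage of producing a concrete, minimal seed graph. Be careful, though: as written, your proof would not survive refereeing, because the central isomorphism claim is wrong as stated.
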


To demonstrate Proposition \ref{prop:i:house}, we provide an exact seed graph for the $i$-graph:  the graph $G$ in Figure  \ref{fig:i:houseConstruct}(a) ($K_3$ with a $P_3$ tail) has  $\ig{G} =\house$.  The $i$-sets of $G$ and their adjacency are overlaid on $\mathcal{H}$ in Figure  \ref{fig:i:houseConstruct}(b).

\begin{figure}[H]
	\centering	
	\begin{subfigure}{.4\textwidth}  \centering
		\begin{tikzpicture}			
			\coordinate (cent) at (0,0) {};
			
			\foreach \i / \j in {a/1,b/2,c/3}
			{
				\path(cent) ++(0: \j*10mm) node[std,label={ 90:${\i}$}]  (\i) {};
			}
			\draw[thick] (a)--(c);	
			
			\foreach \i / \j in {d/45,e/-45}
			{
				\path(c) ++(\j: 15 mm) node[std,label={ 2*\j:${\i}$}]  (\i) {};
			}
			
			\draw[thick](c)--(d)--(e)--(c);	
			
			\path (cent) ++ (90:30mm) coordinate (vfil) {};
			
			
			
		\end{tikzpicture}
		\caption{A graph $G$ such that $\ig{G} = \mathcal{H}$.}
		\label{fig:i:houseConstruct:a}%
	\end{subfigure}	\hspace{1cm}
	\begin{subfigure}{.5\textwidth} \centering
		\begin{tikzpicture}			
			\coordinate (cent) at (0,0) {};
			
			\foreach \i / \j / \k in {1/a/e, 2/a/d, 3/b/d, 4/b/e}
			{
				\path(cent) ++(\i*90-45: 13mm) node[std,label={\i*90-45:$\{\j,\k\}$}]  (v\i) {};
			}
			\draw[thick] (v1)--(v2)--(v3)--(v4)--(v1);
			
			\path(cent) ++(90: 20mm) node[std,label={ 90:$\{a,c\}$}]  (v5) {};	
			
			\draw[thick] (v1)--(v5)--(v2);
			
		\end{tikzpicture}
		\caption{The house graph $\mathcal{H}$ with $i$-sets of $G$.}
		\label{fig:i:houseConstruct:b}%
	\end{subfigure}		
	\caption{The graph $G$ for Proposition \ref{prop:i:house} with $\ig{G}=\mathcal{H}$.}
	\label{fig:i:houseConstruct}%
\end{figure}
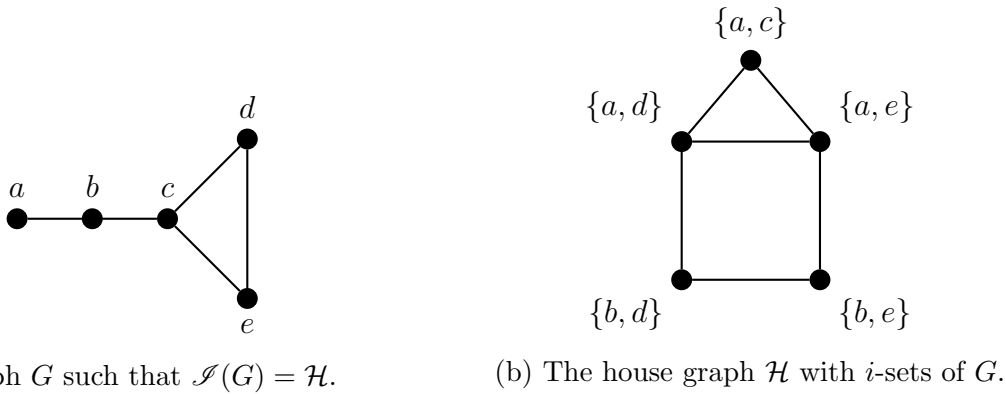



\section{Conclusion}
As we observed above, although not every graph is $i$-graph realizable, every graph does have an $i$-graph.  The exact structure of the resulting $i$-graph can vary among families of graphs from the simplest isolated vertex to surprisingly complex structures.  To illustrate this point, we determine the $i$-graphs of paths and cycles in \cite{BMT2}. 

We showed in Section \ref{sec:i:Real} that the theta graphs $\Dia \cong \thet{1,2,2}$, $K_{2,3}\cong\thet{2,2,2}$, and $\kappa \cong \thet{2,2,3}$ are not $i$-graph realizable. In \cite{BMT3} we investigate the class of theta graphs and determine exactly which ones fail to be $i$-graph realizable -- there are only finitely many such graphs. We also present a graph that is neither a theta graph nor $i$-graph realizable. The following question remains open.

\begin{ques}
Does there exist a finite forbidden subgraph characterization of $i$-graph realizable graphs?
\end{ques}

\medskip

\noindent\textbf{Acknowledgement\hspace{0.1in}}We acknowledge the support of
the Natural Sciences and Engineering Research Council of Canada (NSERC), RGPIN-2014-04760 and RGPIN-03930-2020.

\noindent Cette recherche a \'{e}t\'{e} financ\'{e}e par le Conseil de
recherches en sciences naturelles et en g\'{e}nie du Canada (CRSNG), RGPIN-2014-04760 and
RGPIN-03930-2020.
\begin{center}
\includegraphics[width=2.5cm]{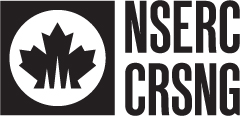}%
\end{center}
	

	\bibliographystyle{abbrv} 
	\bibliography{LTPhDBib} 
	
\end{document}